

\documentclass[12pt,a4paper,final]{article}

\usepackage[a4paper,margin=25mm]{geometry}

\usepackage{graphicx}

\usepackage[color,notcite,notref]{showkeys} 
\definecolor{refkey}{gray}{.5}   
\definecolor{labelkey}{rgb}{.7,.2,.7} 

\usepackage{pdfsync,paralist,tikz}
\usepackage{graphpap,bm,color,hyperref}
\usepackage{amsmath,amsthm,amssymb,mathrsfs,enumerate}
\usepackage[normalem]{ulem}
\usepackage[mathcal]{euscript}

\numberwithin{equation}{section}
\numberwithin{figure}{section}

\newtheorem{theorem}{Theorem}[section]
\newtheorem{lemma}[theorem]{Lemma}
\newtheorem{definition}[theorem]{Definition}
\newtheorem{proposition}[theorem]{Proposition}
\newtheorem{corollary}[theorem]{Corollary}
\newtheorem{remark}[theorem]{Remark}
\newtheorem{example}[theorem]{Example}

\newtheorem{notation}[theorem]{Notation}


\newcommand{\N}{\mathbb{N}}

\newcommand{\R}{\mathbb{R}}


\newcommand{\EE}{\mathscr{E}}


\newcommand{\cE}{\mathcal E}

\newcommand{\cM}{\mathcal M}

\def\calD{{\mathcal D}}  \def\calE{{\mathcal E}} \def\calF{{\mathcal F}}
\def\calG{{\mathcal G}}  
\def\calI{{\mathcal I}} \def\calJ{{\mathcal J}} 
\def\calM{{\mathcal M}} \def\calP{{\mathcal P}} 
\def\calR{{\mathcal R}} \def\calW{{\mathcal W}} 

\def\rmC{{\mathrm C}} \def\rmD{{\mathrm D}} 


\newcommand{\sft}{{\mathsf t}}




\newcommand{\rmd}{\mathrm{d}}
\newcommand{\dd}{\,\mathrm{d}}

\renewcommand{\div}{\mathop{\mathrm{div}}\nolimits}

\newcommand{\weakto}{\rightharpoonup}
\newcommand{\graph}{\mathop{\mathrm{graph}}}

\newcommand{\el}{\mathrm{el}}

\newcommand{\argmin}{\mathop\mathrm{Argmin}}
\newcommand{\pairing}[4]{\sideset{_{#1}}{_{#2}}{\mathop{ 
                          \langle #3 , #4  \rangle}}}
\newcommand{\down}{\downarrow}
\newcommand{\up}{\uparrow}
\newcommand{\foraa}{\text{for a.a.\ }}

\newcommand{\ene}[2]{\calE(#1,#2)}

\newcommand{\diff}[2]{\diffname(#1,#2)}
\newcommand{\diffe}[3]{\diffname_{#1}(#2,#3)}
\newcommand{\cg}[1]{\calG(#1)}
\newcommand{\diffname}{\mathfrak{F}}
\newcommand{\domain}{\mathrm{D}}
\newcommand{\dom}{\mathop{\mathrm{dom}}}
\newcommand{\BV}{\mathrm{ BV}}
\newcommand{\AC}{\mathrm{AC}}


\newcommand{\piecewiseConstant}[2]{\overline{#1}_{\kern-1pt#2}}
\newcommand{\underpiecewiseConstant}[2]{\underline{#1}_{\kern-1pt#2}}
\newcommand{\piecewiseLinear}[2]{#1_{\kern-1pt#2}}
\newcommand{\pwc}{\piecewiseConstant}
\newcommand{\upwc}{\underpiecewiseConstant}
\newcommand{\pwl}{\piecewiseLinear}
\newcommand{\pwm}[2]{\widetilde{#1}_{\kern-1pt#2}}

\newcommand{\SE}{S}
\newcommand{\Ptname}{\mathfrak{P}}
\newcommand{\Pt}[3]{\Ptname(#1,#2,#3)}
\newcommand{\Rtname}{\mathfrak{R}}
\newcommand{\Rt}[3]{\Rtname(#1,#2,#3)}
\newcommand{\enei}[3]{\cE_{#1}(#2, #3)}
\newcommand{\frsub}{\partial}
\newcommand{\phimin}{\varphi}
\newcommand{\phiminn}{\varphi_{n}}
\newcommand{\transp}[1]{{#1}^{\mathsf{T}}}
\newcommand{\transpi}[1]{{#1}^{\mathsf{-T}}}
\newcommand{\inve}[1]{{#1}^{-1}}
\newcommand{\qphi}{{q_{\Phi}}}
\newcommand{\qp}{{q_\mathsf P}}

\newcommand{\qg}{{q_{\mathsf G}}}
\newcommand{\qf}{{q_\mathsf F}}
\newcommand{\qga}{{q_{\gamma}}}
\newcommand{\en}{\calE}
\newcommand{\cof}{\mathrm{cof}}
\newcommand{\amb}{X}
\newcommand{\ambs}{X^*}
\newcommand{\normam}[1]{\|#1\|_{\amb}}
\newcommand{\normams}[1]{\|#1\|_{\ambs}}
\newcommand{\grasys}{(X,\en,\Psi,\diffname,\Ptname)}
\newcommand{\grasyse}[1]{(X,\en_{#1},\Psi,\diffname_{#1},\Ptname_{#1})}

\newcommand{\gateaux}[3]{\Manx{#1}{\nabla#2 \inve{#3}}\transpi{#3}}

\newcommand{\argut}[2]{(\nabla#1 \inve{#2})}
\newcommand{\GLD}{\mathrm{GL}^+(d)}
\newcommand{\rmetric}{R}
\newcommand{\rdiss}{\mathcal{R}}
\newcommand{\rpam}{\eta}
\newcommand{\param}{\eta}
\newcommand{\Man}[1]{M(#1)}
\newcommand{\Manx}[2]{M(#1,#2)}
\newcommand{\mins}[2]{\mathcal{M}(#1,#2)}
\newcommand{\minse}[3]{\mathcal{M}_{#1}(#2,#3)}
\newcommand{\rcalF}{\widetilde{\calF}}
\newcommand{\dell}{\dot{\ell}}
\newcommand{\gdir}{g_{\mathrm{Dir}}}
\newcommand{\dir}{\mathrm{Dir}}
\newcommand{\neu}{\mathrm{Neu}}
\newcommand{\calfid}{\mathcal Y}

\newcommand{\Kirchx}[2]{\mathbb K(#1,#2)}
\newcommand{\Kirchname}{S}
\newcommand{\Back}[2]{B(#1,#2)}
\newcommand{\Backx}[3]{B(#1,#2,#3)}
\newcommand{\inpow}[4]{\Kirchx{#1}{\nabla{#2} (t,#3)\nabla {#3}{#4}}}
\newcommand{\OOdd}{} 
\newcommand{\OOd}{} 

\newcommand{\sfti}[2]{\mathsf{t}_{#1}^{#2}}
\newcommand{\PPt}[2]{P_{#1}^{#2}}

\definecolor{ddmagenta}{rgb}{0.7,0,1.0}
\definecolor{ddcyan}{rgb}{0,0.1,1.0}
\definecolor{dred}{rgb}{.8,0,0}
\definecolor{ddgreen}{rgb}{0,0.4,0.4}
\definecolor{ALEXgreen}{rgb}{0.1,0.7,0.1}
%



\newcommand{\address}[1]{\thanks{#1}}
\newcommand{\email}[1]{\texttt{#1}}

\begin{document}

\title
{Global existence results for\\ viscoplasticity at finite strain%
 \thanks{A.M. has been partially
  supported by the ERC under AdG 267802 AnaMultiScale and by DFG
  under SFB\,1114, project C5. 
  R.R. and G.S. have been partially supported by a MIUR-PRIN'10-11 grant
  for the project ``Calculus of Variations''. R.R.\  also acknowledges
  support from the  Gruppo Nazionale per  l'Analisi Matematica, la
  Probabilit\`a  e le loro Applicazioni (GNAMPA) of the Istituto
  Nazionale di Alta Matematica (INdAM).}
} 

\date{September 18, 2016}

\author{
Alexander Mielke%
  \address{Weierstra\ss-Institut f\"ur Angewandte Analysis und Stochastik,
  Mohrenstra\ss{}e 39,  D--10117 Berlin, Germany.
  \email{alexander.mielke\,@\,wias-berlin.de}}
,
Riccarda Rossi%
\address{DIMI, Universit\`a di
  Brescia, via Valotti 9, I--25133 Brescia, Italy.
  \email{riccarda.rossi\,@\,unibs.it}}
,
Giuseppe Savar\'e%
  \address{Dipartimento di Matematica ``F.\ Casorati'', Universit\`a di
  Pavia.  Via Ferrata, I--27100 Pavia, Italy.
  \email{giuseppe.savare\,@\,unipv.it}}
}

\maketitle


\begin{abstract}
  We study a model for rate-dependent gradient plasticity at finite
  strain based on the multiplicative decomposition of the strain
  tensor, and investigate the existence of global-in-time
  solutions to the related PDE system.  We reveal its underlying
  structure as a \emph{generalized gradient system}, where the driving
  energy functional is highly nonconvex and features the geometric
  nonlinearities related to finite-strain elasticity as well as
  the multiplicative decomposition of finite-strain
  plasticity. Moreover, the dissipation potential depends on the
  left-invariant plastic rate and thus, depends on the plastic state
  variable.
 
 The existence theory is developed for a class of abstract, nonsmooth, 
 and nonconvex gradient systems, for which we introduce suitable
 notions of solutions, namely energy-dissipation-balance (EDB) and
 energy-dissipation-inequality (EDI) solutions. Hence, we resort
 to the toolbox of the direct method of the calculus of
 variations to check that the specific energy and 
 dissipation functionals for our viscoplastic models 
 comply with the conditions of the general theory.  
\end{abstract}

\section{Introduction}
\label{s:Intro}
This paper is focused on the analysis of a model for elastoplasticity
at finite strain in a bounded body $\Omega\subset \R^d$.  Its elastic
behavior is described by the \emph{deformation} $\varphi: \Omega \to
\R^d$. The ansatz at the core of finite-strain elastoplasticity, see
\cite{Lee69EPDF}, is the \emph{multiplicative} decomposition of the
deformation gradient into an \emph{elastic} and a \emph{plastic} part,
namely
\begin{equation}
 \label{decomposition-intro}
 \nabla \varphi = F_{\el } F_{\mathrm{pl}} \doteq F_{\el } P,
\end{equation}
which reflects the composition of elastic and plastic deformations.
While the elastic part contributes to energy storage and is governed
by an equilibrium equation, the plastic tensor $P$ evolves by a
plastic flow rule.

The multiplicative decomposition \eqref{decomposition-intro} leads to
significant geometric nonlinearities in the energy functional $\calI =
\calI(t,\varphi, P)$ driving the evolution of the elastoplastic
process. Nonetheless, such nonlinearities are compatible with
\emph{polyconvexity} of the energy density. In fact, the theory of
polyconvex materials, dating back to \cite{Ball76CCET}, has provided
the analytical toolbox to handle \emph{elastostatics} at finite
strain. In particular, for the analysis of static microstructures in
elastoplasticity we refer to, e.g., \cite{MulAif91VPGP, OrtRep99NEMD,
  OrReSt00TSDS, CaHaMi02NCPM, Mieh03CMMT, ConThe05SSEM, ConOrt05DMEB}.

A fundamental step towards the analysis of the \emph{evolution} of
finite-strain elastoplastic materials was made in \cite{OrtSta99VFVC}:
therein it was pointed out that evolutionary elastoplastic models can
be discretized by time-incremental problems that can be written as
minimization problems for the sum of  the  dissipated and of the stored
energy.  This observation was mathematically formalized in
\cite{Miel03EFME} and forms the basis of the analysis in
\cite{MieMul06LSEM,MaiMie09GERI,HaHeMi12MELF}, where the flow rule for
the plastic tensor was considered \emph{rate-independent}, i.e.\
driven by a dissipation potential that is positively
homogenous of degree $1$. In this context, the existence of
\emph{(global) energetic solutions} was proved by passing to
the limit in a time-discretization scheme in the spirit of
\cite{OrtSta99VFVC}.  In fact, the analysis developed in
\cite{FraMie06ERCR,MaiMie09GERI,MieRou15?RIEF} shows that this
solution concept for rate-independent processes is markedly suited to
the geometric nonlinearities in finite-strain
elastoplasticity. Essentially, its intrinsic \emph{variational}
character allows for the successful application of the direct
methods in the calculus of variations also in the evolutionary
case. Other mathematical results for energetic solutions of
rate-independent material models at finite strain include crack
propagation and brittle fracture, see
\cite{KnZaMi10CGPM,DalLaz10QCGF}. 
 
In this paper we consider finite-strain elastoplasticity as a
\emph{rate-dependent} process, i.e.\ involving a dissipation potential
with \emph{superlinear} growth at infinity. A natural choice is of
course given by a \emph{quadratic} dissipation potential, leading to a
classical gradient flow. In fact, our analysis builds on the
by now well-established \emph{variational theory} for gradient flows,
see \cite{Ambr95MM, AmGiSa05GFMS}, and \emph{generalized gradient
systems} \cite{RoMiSa08MACD, MiRoSa13NADN}, which are characterized by
nonquadratic dissipation potentials. In particular, along the
footsteps of \cite{MiRoSa13NADN} we will address two (intrinsically
variational) solution concepts for the elastoplastic system, based on
a suitable energy-dissipation inequality, which holds as an equality
in the case of the strongest notion. Accordingly, we will obtain two
distinct existence results.

Before illustrating our analysis more in detail, let us gain further
insight into the features of the elastoplastic model under
investigation, and in particular into the highly nonlinear driving
energy.

\subsection{Modeling of viscoplasticity}
\label{su:1.1}
The elastoplastic evolution of  the body $\Omega$ is described by two
variables. The deformation $\varphi:\Omega \to \R^d$ is a mapping such
that for almost all $x\in \Omega$ the gradient $\nabla \varphi(x)$
exists and belongs to the general linear group $\GLD$ of $(d{\times}
d)$-matrices with positive determinant. Following the theory
of \emph{generalized standard materials} \cite{HalNgu75MSG, Maug92TPF,
  Frem02NST}, we consider the plastic tensor $P:= F_{\mathrm{pl}} \in
\mathcal{P} \subset \GLD$, cf.\ \eqref{decomposition-intro}, as an
internal variable, modeling the internal state of the body. As such it
is a macroscopic variable (we do not resolve the atomistic length),  
which is  assumed to be generated by movements of dislocations, and
maps the material frame (i.e.\ the crystallographic lattice) onto
itself.

The evolution is governed by two principles: 
\begin{description}
\item[\emph{Energy storage}] via a time-dependent Gibbs' free energy $
\calI (t,\varphi, P)$ and 
\item[\emph{Energy dissipation}] via a dissipation potential
  $\widehat\Psi(\varphi,P,\dot\varphi,\dot P)$.
\end{description}
We assume that inertial effects can be ignored (quasistatic
approximation) such that the equations of interest take the
abstract variational form 
\begin{subequations}
 \label{eq:Abstr}
 \begin{align}
  &\label{eq:Abstr.a} 0 = \rmD_{\dot\varphi}
   \widehat\Psi(\varphi,P,\dot\varphi,\dot{P}) + \rmD_\varphi \calI(t,\varphi,P), 
  \\
  &0 \,\in\, \partial_{\dot P}\widehat\Psi(\varphi,P,\dot\varphi,\dot{P})
    + \rmD_P \calI(t,\varphi,P).  
  \label{eq:Abstr.b}
\end{align} 
\end{subequations}
Here all derivatives should be taken as variational derivatives, and
since $\calJ$ and $\widehat\Psi$ depend, except for potential external
loadings, on the gradients $\nabla \varphi$ and $\nabla\dot\varphi$,
the first equation \eqref{eq:Abstr.a} has the usual divergence form, i.e.\
$\rmD_{\dot\varphi}\widehat\Psi$ contains the divergence of the viscous
stresses while $\rmD_\varphi\calI$ contains the divergence of the elastic 
stress tensor, see e.g.\ \cite{MarHug94MFE,Antm95NPE,MiOrSe14ANVM}. Hence,
\eqref{eq:Abstr.a} provides the balance of linear momentum, whereas 
\eqref{eq:Abstr.b} contains the plastic flow rule. The term $\rmD_P
\calI$ contains the plastic 
backstress and the convex subdifferential 
$\partial_{\dot P}\widehat\Psi$ contains the viscoplastic
stresses; in particular for $\dot P=0$ it features the yield stress. We refer  
to \cite{Miel11FTDM} for general modeling background, even including
a thermodynamically consistent modeling of temperature effects.  

The theory of viscoelasticity at finite strain is notoriusly difficult
and it seems that the present mathematical tools are not sufficient to
provide sufficiently strong solutions in the multidimensional, truly
geometrically invariant case, see the discussion in
\cite{MiOrSe14ANVM}. Hence, we will neglect viscous effects
subsequently by using the dissipation potential
$\widehat\Psi(\varphi,P,\dot\varphi,\dot P)=\Psi_P(\dot P)$, which
leads to a static equation for the displacement $\varphi$. This gives
us the opportunity to replace the stationarity condition $\rmD_\varphi
\calI(t,\varphi,P)=0$ by the global minimality
\begin{equation}
 \label{eq:Abstr.c}
  \varphi(t) \in \mathrm{Argmin}\{ \, \calI(t,\widetilde \varphi, P(t))
  \: : \ \widetilde \varphi \in \calF\,\}.   
\end{equation}
Thus, the starting point of our analysis is the
system  \eqref{eq:Abstr.c} and \eqref{eq:Abstr.b} for the pair
$(\varphi,P):[0,T] \to \calF\times X$. However, the differential
inclusion \eqref{eq:Abstr.b} has be replaced by a slightly weaker
notion of solution, which we will call EDI solution or EDB solutions,
see  below.

The stored energy $\calI$ and the dissipation potential $\Psi$ take
the form
\begin{equation}
\label{gibbs-intro}
\begin{aligned}
&\calI (t,\varphi, P)  = \int_\Omega \calW (x,\nabla \varphi(x),
P(x),\nabla P(x)) \dd x - \pairing{}{}{\ell(t)}{\varphi}
\\
& \text{and } \Psi_P(\dot P)=\int_\Omega \calR(x,P(x),\dot
P(x))\dd x,   
\end{aligned}
\end{equation}
where $\ell$ is a sufficiently smooth time-dependent loading,  see
\eqref{loading}.  For a given Banach space $X$, we continue to use
$\pairing{}{X}{\cdot}{\cdot}$ for the dual pairing on $X^*\times X$.   

The energy density $\calW$ and the pointwise dissipation potential
$\calR$ feature geometric nonlinearities arising from frame
indifference,  non-self-interpenetration,  and the Lie group structure of
finite strains like the multiplicative decomposition
\eqref{decomposition-intro}.  More precisely, this is 
reflected in a series of \emph{invariance principles} for the energy
density $\calW $ and $\calR$. First of all, we assume that
$\calW $ is given by the sum of an elastic part and of a part
encompassing hardening and regularizing (through the gradient of the
plastic variable) terms, i.e.
\begin{equation}
\label{W-decomposition}
\calW(x, \nabla \varphi, P, \nabla P) = \calW_{\el } (x,\nabla
\varphi, P) +  H  (x, P, \nabla P)\,. 
\end{equation}

Following \cite{MarHug94MFE,Antm95NPE,Miel03EFME},  the elastic
part $W_{\el }$ has to satisfy \emph{spatial frame indifference}
(or \emph{objectivity}), namely
\begin{subequations}
\label{invariance-principles}
\begin{align}
&
\label{invar-1}
\calW_{\el } (x,Q\nabla \varphi,P) = \calW_{\el }
(x,\nabla \varphi,P) \quad \text{for all } Q \in \mathrm{SO}(d), 
\end{align}
i.e.\ invariance with respect to rotations acting from the left, which
is compatible with polyconvexity together with the condition that
$\calW_{\el } (\nabla \varphi) = \infty$ for
$\mathrm{det}(\nabla \varphi)\leq 0$, and $\calW_{\el } (\nabla
\varphi) \to \infty$ for $\mathrm{det}(\nabla \varphi)\down 0$.

In addition, we postulate for $\calW_{\el } $ and
$\mathcal{R}$ \emph{plastic indifference} (cf.\
\cite{Miel03EFME}), namely 
\begin{align}
\label{invar-2}
\calW_{\el } (x,\nabla \varphi \widetilde{P}, P
\widetilde{P} ) = \calW_{\el } (x,\nabla \varphi,P), \ \
\mathcal{R} (x,P \widetilde{P}, \dot{P} \widetilde{P} ) = \mathcal{R}
(x,P, \dot{P})\quad \text{for } \widetilde{P} \in
\mathcal{P}\,.
\end{align}
\end{subequations}
This axiom implies that both $\calW _{\el } $ and
$\mathcal{R}$ can be written in reduced form as
\begin{equation}
\label{reduced-densities}
\calW_{\el} (x, \nabla \varphi, P) = W(x,\nabla \varphi
P^{-1}), \qquad  \mathcal{R}  (x,P, \dot{P}) = R(x, \dot{P}
P^{-1})\,. 
\end{equation}

These multiplicative structures give rise to strong 
geometric nonlinearities.  This is easily seen when writing the
PDE system induced by \eqref{eq:Abstr.c} and \eqref{eq:Abstr.b}
explicitly, 
namely 
\begin{subequations}
\label{pde}
\begin{align}
&
\label{pde-1}
 \varphi(t) \in \argmin\left \{ \int_{\Omega}
 W(x,\nabla \widetilde{\varphi}(x) P^{-1}(t,x)) \dd x - \langle
 \ell(t), \widetilde\varphi \rangle 
 \, : \ \widetilde{\varphi}
\in \calF \right \}, 
\\
& 
\label{pde-2}
\begin{aligned} \partial \rmetric(x,\dot{P}   \inve{P})\transpi{P}  
+  \transp{(\nabla \varphi P^{-1})} \rmD_F W(x,\nabla \varphi P^{-1})
\transpi{P}\ &\\   
+\, \rmD_P H(P,\nabla P)- \div\big(\rmD_{\nabla P}
H(P,\nabla P)\big) 
&=0,
\end{aligned}
\end{align}
\end{subequations}
where $\calF$ denotes the set of admissible deformations. Clearly,
on the formal level \eqref{pde-1} yields solutions to
\eqref{eq:Abstr.a}, as the latter is the Euler-Lagrange equation for
the minimum problem in \eqref{pde-1}. We refer to
\cite[Sec.\,4]{ZRSRZ96TDIT} for an engineering application of such a
finite-strain viscoplastic model, where $R(V)=\sigma_\text{yield}|V|+
c|V|^{1.012}$, see also Example \ref{ex:R}.

\subsection[Variational approaches: rate-independent versus
\protect rate-dependent evolution]{Variational approaches:
  rate-independent versus \protect \\  rate-dependent evolution}\label{su:1.2}

Variational approaches and formulations are ideal for treating 
material models involving finite-strain elasticity and finite-strain
plasticity. The reason is that the direct methods from the calculus of
variations rely on the flexible concept of weak lower semicontinuity, which 
allows us to circumvent the much too strong convexity methods that are
available for small-strain theories. 

A first global existence result for finite-strain elastoplasticity
was obtained  in \cite{MaiMie09GERI}, where solvability of \eqref{pde} was
addressed in the case of rate-independent systems,  i.e.\ when
$\widetilde{R}$ (and thus $R$) fulfills $\widetilde{R}(x,P,\lambda
\dot{P}) =\lambda \widetilde{R}(x,P, \dot{P}) $ for every \hbox{$\lambda
\geq 0$}. Indeed, the authors proved the existence of \emph{(global)
  energetic solutions} to \eqref{pde} according to  the energetic concept 
(cf.\ \cite{MieRou15RIST} for a general exposition), where
$(\varphi,P):[0,T] \to \calF\times X$ has to satisfy 
the \emph{(global) stability condition} \eqref{stab-intro} and the
\emph{energy balance} \eqref{en-bal-intro} for all $t\in [0,T]$:  
\begin{align}
  \label{stab-intro}\tag{$\mathrm{S}$}
 &\calI(t,\varphi(t), P(t)) \leq \calI(t,\widetilde{\varphi},
 \widetilde{P}) + \mathcal{D}(P(t),\widetilde{P}) \quad \text{for all }
 (\widetilde{\varphi} ,\widetilde{P}) \in \calF  \times X, 
 \\
  \label{en-bal-intro}\tag{$\mathrm{E}$} 
 &\calI(t,\varphi(t), P(t)) +
 \mathrm{Diss}_{\mathcal{D}} (P; [0,t]) = \calI(0,\varphi(0), P(0)) +
 \int_0^t \partial_t \calI(s,\varphi(s), P(s)) \dd s.
\end{align}
Here $\calD$ is an extended distance suitably defined from the
$1$-homogeneous dissipation $R$, and $ \mathrm{Diss}_{\calD} $
the induced dissipation functional. The most convenient feature of the
energetic formulation via \eqref{stab-intro} and \eqref{en-bal-intro}
is that it neither involves the pointwise derivative $\dot P$ of the
plastic tensor, which is only $\mathrm{BV}$ as a function of time, nor
any differential ``$\rmD\calJ$'' of the energy $\calI$. This is
extremely advantageous in view of the highly nonlinear and nonsmooth
character of $\calI(t,\cdot,\cdot)$ and the fact that $(X,\calD)$
needs to be treated as a metric space not relying on a linear Banach
space structure. 

The present work is devoted to the \emph{rate-dependent case}, but we
still rely on the variational structure given  by  \eqref{eq:Abstr.b} and
\eqref{eq:Abstr.c}, which form the abstract version of \eqref{pde}. 
Our system is induced by a \emph{generalized gradient system}
$(X,\calE,\Psi)$ with the \emph{reduced energy} $\calE$ and the
state-dependent dissipation potential $\Psi$ given by
\begin{equation}
\label{reduced-energy-intro} \en (t,P) : = \inf\{ \calI(t,\varphi,P)\, : \
\varphi \in \calF\} \quad \text{and} \quad 
\Psi_{P}(\dot P) : =\int_\Omega R(\dot{P}P^{-1}) \dd x\,.
\end{equation} 
Then, \eqref{pde} can be rewritten as the abstract subdifferential
inclusion
\begin{equation}
\label{abstract-dne-intro}
0 \ \in\ \partial \Psi_{P(t)}(\dot P(t)) + \diff t{P(t)}  \quad
\text{in $\amb^*$} \ \ \foraa   t \in (0,T),
\end{equation}
where the state space $\amb$ is $L^p(\Omega;\R^{d\times d })$, and
the multivalued operator $\diffname: [0,T]\times \amb
\rightrightarrows \amb^*$ is the \emph{marginal subdifferential} of
$\calE$ (cf.\ \cite{MiRoSa13NADN}) defined via
\[
 \diff t{P}: = \{ \rmD_{P} \calI(t,\varphi,P)\, : \ \varphi \text{ is a
  minimizer for \eqref{reduced-energy-intro}}\}\,.
\]

Denoting by $\Psi^*_P$ the Fenchel-Moreau conjugate
$\Psi^*_P(\Xi) = \sup\{ \pairing{}{X}{\Xi}{V}  {-} \Psi_P(V)\,:\ V\in
X\,\}$ of $\Psi_P$, the celebrated Fenchel equivalence states that 
\begin{equation}
  \label{eq:Fenchel}
 -\Xi \in \partial \Psi_P(\dot P) \quad \Longleftrightarrow \quad 
\Psi_P(\dot P)+ \Psi^*_P(-\Xi)=
-\pairing{}{X}{ \Xi}{\dot P}. 
\end{equation}
Thus, together with the chain rule
\begin{equation}
\label{ch-rule-intro}
\frac{\dd }{\dd t } \en(t,P(t)) = \partial_t \en(t,P(t))  +\!
\pairing{}{\amb}{\Xi(t)}{\dot{P}(t)} \text{ for all } \Xi(t) \in
\diff t{P(t)} \  \foraa   t \in (0,T) 
\end{equation}
(cf.\ Section \ref{s:3} for details, in particular for the 
treatment of the nonsmoothness of $t\mapsto \calE(t,p)$ induced by
\eqref{reduced-energy-intro}) we see that the pointwise
subdifferential inclusion \eqref{abstract-dne-intro} is 
equivalent to the \emph{energy-dissipation balance} 
\begin{equation}
  \label{eq:Int.EDB}
 \text{(EDB)}\qquad \left\{\begin{aligned}
 &\en(T,P(T)) + \int_0^T\! \left( \Psi_{P(t)}(\dot P(t)) + \Psi_{P(t)}^*
   (-\Xi(t)) \right) \dd t  \\ &= 
\en(0,P(0)) + \int_0^T\!\! \partial_t \en(t,P(t)) \dd t\,, \ 
 \text{ where } \Xi(t) \in \diff{t}{P(t)}.
\end{aligned}\right.
\end{equation}
In \cite{Miel16EGCG} this
equivalence is called the \emph{energy-dissipation principle}, which has
its origin in De Giorgi's theory of curves of maximal slope, cf.\
\cite{DeMaTo80PEMS}. 

 Observe that both energy identities
\eqref{en-bal-intro} and \eqref{eq:Int.EDB} balance the
stored energy $\calE$ with the work of the external forces and the
dissipated energy. However, \eqref{eq:Int.EDB} features the
additional dissipative term $\int_0^T \Psi_{P(r)}^* (-\Xi(r)) \dd r $
(which is in fact null in the rate-independent case), involving the
force term $\Xi(t)  =\rmD_P \calI(t,\varphi(t), P(t))$ that needs to
be suitably handled. 

However, since the conjugate pair $(\Psi_P,\Psi^*_P) $ always
satisfies the Young-Fenchel inequality $\Psi_P(V)+\Psi^*_P( \Xi )\geq  \pairing{}{X}{ \Xi }{V} $  for all $V\in X$ and $ \Xi  \in X^*$, it is in fact
sufficient to ask for the estimate ``$\leq$'' in the right-hand side
of \eqref{eq:Fenchel}. Thus, it suffices to  replace the EDB by the 
weaker \emph{energy-dissipation inequality}:
\begin{equation}
  \label{eq:Int.EDI}
  \text{(EDI)}\qquad \left\{ \begin{aligned} 
 &\text{For }s=0 \text{ and a.a.\ }s\in (0,T] \text{ and all }t\in (s,T]
  \text{ we have }
 \\
 &\en(t,P(t)) + \int_s^t\! \left( \Psi_{P(r)}(\dot P(r)) + \Psi_{P(r)}^*
   (-\Xi(r)) \right) \dd r 
 \\ 
 & \leq \en(s,P(s)) + \int_s^t\!\! \partial_r \en(r,P(r)) \dd r\,, \ 
   \text{ where } \Xi(r) \in \diff{r}{P(r)}.
\end{aligned}\right.
\end{equation}
Assuming that the chain rule \eqref{ch-rule-intro} holds, an easy
application of the Young-Fenchel inequality gives the equivalence of
EDI and EDB, see Proposition \ref{prop:weak-to-en}.  However, in
situations where the chain rule does not hold, EDI is strictly weaker.
Thus, we can now define three different types of solutions for
$P:[0,T] \to X$.

\begin{description}\itemsep-0.2em
\item[DNE solution:] $P$ satisfies the doubly nonlinear eqn.\ 
  $0\in \partial\Psi_P(\dot P) + \diff tP$, see
  \eqref{abstract-dne-intro}; 
\item[EDI solution:] $P$ satisfies the EDI  \eqref{eq:Int.EDI};
\item[EDB solution:] $P$ satisfies the EDB \eqref{eq:Int.EDB} and 
the doubly nonlinear eqn.\ \eqref{abstract-dne-intro}.
\end{description}
The exact form of EDI and EDB solutions are given in Definitions
\ref{def:weak-sols} and \ref{def:energy-sols}, respectively; see
Figure \ref{fig:Sol} for the inclusion relations between the solution
types.  We will not address the notion of DNE solutions, which is
typically used for evolutionary system formulated with monotone
operators, since our approach involves 
the variational structure where the energy and dissipation are crucial
to obtain a priori bounds.  

\begin{figure}
{\centering\begin{tikzpicture}
\draw[fill = green!20,opacity=0.3] (0,0) ellipse (2 and 1);
  \node at (-1,0) {EDI}; 
\draw[opacity=0.3,fill=blue!20] (2,0) ellipse (2 and 1);
  \node at (3,0) {DNE};  
\draw[opacity=0.3,fill=red!40,opacity=0.5] (1,0) ellipse (0.6 and 0.6);
  \node at (1,0) {EDB};
\end{tikzpicture}\par}
\caption{Schematic graph of solution concepts}\label{fig:Sol}
\end{figure}
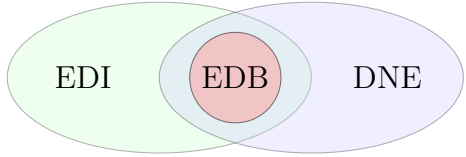

\subsection{The main  results}
\label{su:1.3}
For the analysis of the gradient system \eqref{abstract-dne-intro},
with $\calE$ and $\Psi$ given by \eqref{reduced-energy-intro}, we will
follow the abstract variational approach from \cite{MiRoSa13NADN},
which was developed exactly to treat nonsmooth and nonconvex energies
like $\calE$. There, following the spirit of the variational theory
for gradient systems in metric spaces, a set of abstract conditions on
the energy functional $\calE$ and on the dissipation potential $\Psi$
were derived that guarantee to the existence of an \emph{energy
  solution} (i.e.\ fulfilling the EDB \eqref{eq:Int.EDB}) to
\eqref{abstract-dne-intro}.
 
While referring for more details to Section \ref{s:3}, where the
results from \cite{MiRoSa13NADN} are recapitulated, let us highlight
here the role of the required conditions for $(\calE,\diffname,\Psi)$
on coercivity, (lower semi-) continuity, compactness, and
closedness. First we need compactness of sublevels of $\calE$ and
continuity of $\Psi_P(\cdot)$ to ensure that the time-incremental
minimization problems
\begin{equation}
 \label{time-incremental-intro}
  \PPt \tau n \in \argmin_{P \in X} \left( \tau \Psi_{\PPt \tau {n-1}}
  \left(\frac{P -\PPt\tau{n-1}}{\tau}  \right)  +  \ene{\sfti \tau
    n}{P}\right)
\end{equation}
with the state space $X=L^{p}(\Omega;\R^{d \times d})$, have
solutions for every partition $\{ 0=\sfti \tau 0< \sfti\tau 1< \dots <
\sfti\tau{N-1}<\sfti\tau N=T\} $ of $[0,T]$. Classical arguments from
the theory of \emph{minimizing movements}, cf.\
\cite{Ambr95MM,AmGiSa05GFMS} show that the approximate solutions
constructed from $(\PPt \tau n )_{\tau=0}^{N}$ satisfy a discrete
version of the EDI \eqref{eq:Int.EDI}. The limit passage $\tau\down 0$
in this discrete EDI is ensured by the following
\emph{closedness/continuity} property for $(\calE, \diffname)$:
\begin{equation}
\label{closedness-intro}
\left.
\begin{array}{ll}
P_n\to P & \text{in } X,
\\
\Xi_n \weakto \Xi &  \text{in } \amb^*,
\\
\Xi_n \in \diff {t}{P_n} & \text{for all } n \in \N
\end{array}
\right\} \ \Longrightarrow \ \Xi \in \diff t{P} \text{ and }
\en(t,P_n)\to\en(t,P)\,. 
\end{equation}
Hence, the limit $P$ for $\overline P_\tau(t)\to P(t)$ satisfies  the 
time-continuous EDI \eqref{eq:Int.EDI} and the existence of EDI
solutions follows, see Theorem \ref{thm:abstract-2}.  Moreover, if a
suitable version of the chain rule is available we even have an EDB
solution, see Theorem \ref{thm:abstract-2}. 

In Section \ref{s:4} the viscoplastic model is described in full
detail and and the main existence result for EDI solutions for the
viscoplastic model \eqref{pde} is stated, see Theorem
\ref{th:EDI.Viscopl}. The proof of this result is the content of
Section \ref{s:5}, where we check the assumptions of the abstract
theory. Compactness of sublevels for the energy functional $\calE$ is
provided by hardening and a gradient of plasticity term in
$\int_\Omega H(P,\nabla P) \dd x$ with $H(P,A)\geq
C\,\big(|P|^\qp{+}1/(\det P)^\qga + |A|^\qg\big)$.  As in
\cite{MaiMie09GERI,MieRou15?RIEF}, we require $\qg>d$ to ensure $P \in
W^{1,\qg}(\Omega;\R^{d\times d }) \subset \rmC^0 (\overline\Omega;
\R^{d\times d })$
as well as $P^{-1} \in \rmC^0 (\overline\Omega; \R^{d\times d })$,
which will play a crucial role for handling the multiplicative
nonlinearities $\nabla \varphi P^{-1}$ and $\dot{P} P^{-1}$.

The closedness/continuity condition \eqref{closedness-intro} is
the most difficult part in the application to finite-strain
plasticity, since the marginal subdifferential 
\[ 
\diff tP= \Big\{\: {-} \div\big( \rmD_{\nabla P}H(P,\nabla P)\big) {+}
\rmD_PH(P,\nabla P) {+} B(\nabla \varphi,P) \: : \: \varphi \text{
  minimizes } \calI(t,\cdot,P)\:\Big\} 
\]
contains the plastic backstress $B(F,P)=(F\inve P)^\top 
\rmD_{F_\el} W(F \inve P)P^{-\top}$. For $\diff{t}{P_n}$ this
backstress depends nonlinearly on the
deformation gradients $\nabla \varphi_n$, for which there is only
little control, namely that it is a minimizer. First one
can use a multiplicative stress control  for the  \emph{Mandel stress tensor},
namely 
\begin{equation}
  \label{eq:Int.Mandel}
|\transp{F_\el}\rmD_{F_\el} W(x,F_\el) | \leq C W(x,F_\el)
+C^2, 
\end{equation}
which provides an $L^1$ bound for $B(\nabla \varphi_n,P_n)$. Second,
the weak convergence of this nonlinear quantity is obtained by a
technique developed in \cite[e.g.\ Lem.\,4.11]{DaFrTo05QCGN} that relies
only on the minimization property of $\varphi_n$ and the lower
semicontinuity of $\calI(t,\cdot,\cdot)$.

In fact, the multiplicative stress control  condition  for the Mandel tensor in 
\eqref{eq:Int.Mandel} and the related control of the Kirchhoff tensor,
namely $|\partial_{F_\el} W(x,F_\el) \transp{F_\el}| \leq C W(x,F_\el)
+C^2 $ where introduced in \cite{Ball84MELE,BaOwPh91MPAP} to derive
suitable variants of the Euler-Lagrange equations in finite-strain
elasticity with polyconvex energy densities.  Applications in
rate-independent processes were developed in \cite{FraMie06ERCR,
  MaiMie09GERI, KnZaMi10CGPM, MieRou15?RIEF}, even to control the
power term $\partial_t\calI(t,\phi,P)$ in the case of time-dependent
Dirichlet boundary conditions.  

Based on these preparations we are then able to derive the existence
of EDI solutions for the viscoplastic problem \eqref{pde}, see Theorem 
\ref{th:EDI.Viscopl}.\medskip 

To show that these solutions are even EDB solutions, we need to verify
the chain rule \eqref{ch-rule-intro} for our application in
viscoplasticity. One major point is to gain enough control over $t\in
\Xi(t)$ such that the duality pairing $\pairing{}{X}{\Xi}{\dot P }$ can be manipulated. However, because of the highly nonconvex character
of the energy $\calI$, we are  lacking  individual  control of the 
backstress contribution
$\Xi_B(t):=B(\nabla\varphi(t),P(t))$, which is only in $L^1(\Omega;\R^{d \times
  d}$, and the hardening-regularizing contribution
$\Xi_H(t):=\rmD_PH(P,\nabla P) {-}\div\big( \rmD_{\nabla P}H(P,\nabla P)\big)$
of $\Xi=\Xi_B+\Xi_H$.  Ultimately, we are able to prove the chain
rule \eqref{ch-rule-intro} only in the case in which the duality
pairings between $\dot P$ and both contributions $\Xi_B$ and $\Xi_H$
are  well defined individually, cf.\ also
Remark \ref{rmk:failure}.  So far  the
validity of \eqref{ch-rule-intro} is seemingly an open problem for the
energy functional \eqref{reduced-energy-intro}.

In Section \ref{s:6}  we consider a regularized energy
$\calE_\eta(t,P)= \min\{\,\calI_\eta(t,\varphi,P)\,:\; \varphi\in
\calF\,\}$ with 
\[
\widetilde\calI(t,\varphi,P) = \calI(t,\varphi,P)+ \eta \int_\Omega
\widetilde W(\nabla \varphi)\dd x, \text{ where  } |W_\el(F_\el)|^{p'} \leq
C \widetilde W(F)+ C^2. 
\]
The new density $\widetilde W$  provides  a purely elastic part that gives
higher integrability to $\nabla \varphi$ such that the backstress
$\Xi_B$, which is given in terms of the Mandel tensor, now lies in
$X^*=L^{p'}(\Omega;\R^{d \times d}$). This construction allows us to
establish the chain rule \eqref{ch-rule-intro} for $\calE_\eta$ and
all EDI solutions are indeed EDB solutions, see Theorem
\ref{th:EDB.Viscopl}.  

Finally, in Section \ref{s:7} we discuss some extensions of our
existence results, in particular to the case of time-dependent
Dirichlet loadings.

\section{Solution concepts and  existence results for generalized
  gradient systems} 
\label{s:3}
As mentioned in the introduction, our approach to the existence theory
for the initial boundary value problem for \eqref{pde} is based
on the study of the abstract generalized gradient system
\begin{equation}
\label{abstract-dne}
0 \ \in \ \partial \Psi_{P(t)}(\dot P(t)) + \diff t{P(t)}  \ \ 
\subset \amb^* \quad \foraa   t \in (0,T),
\end{equation}
supplemented with the initial condition
\begin{equation}
\label{initial-P}
P(0)=P_0 \ \in  \amb,
\end{equation}
where we assume throughout that $\amb$ is a separable
reflexive Banach space. Moreover, for $\calE$, $\Psi$, and $\diffname$ we 
use the following notions and notations: 
\begin{enumerate}
\item The energy functional $\en : [0,T] \times \amb \to (-\infty,
  \infty]$ has proper domain $[0,T] \times \domain$, with $\domain
  \subset \amb$, it is bounded from below and lower semicontinuous.
\item The dissipation potential $(\Psi_P)_{P \in \domain}$
  is a Finsler family of non-negative, convex functionals  on
  $\amb$, such that for every $P \in \domain$ both $\Psi_P$ and its
  conjugate $\Psi_P^*$ have superlinear growth at infinity; then,
  $\partial \Psi_{P} : X \rightrightarrows X^*$ denotes the
  subdifferential of $\Psi_{P}$ in the sense of convex analysis.
\item The multivalued mapping $\diffname: [0,T] \times
  \domain \rightrightarrows \ambs$ is such that for every $t\in
  [0,T]$ the mapping $\diff t{\cdot} : \domain \rightrightarrows \ambs$ is a
  suitable notion of a subdifferential for  $\en
  (t,\cdot)$.
\end{enumerate}
In fact, we are going to apply this approach to the reduced energy
$\en:(t,P)\mapsto \min\calI(t,\cdot,P)$ defined by minimizing out the
deformation variable $\varphi$ from the original energy $\calI$. Even
if $\calI$ is differentiable with respect to $t\in [0,T]$, the
minimization with respect to $\varphi$ leads to nonsmoothness in $t$,
because the functional $\calI(t,\cdot,P)$ is nonconvex. Hence, the
subdifferential $\diffname$ of $\en$ as well as the time derivative of
$t\mapsto \ene tP$ need to be handled carefully by resorting to
suitable surrogates of the differentials of $\en$. In particular,
following \cite{MiRoSa13NADN} we need a coupling of the choice $\Xi\in
\diff tP$ and the time derivative. Thus as a surrogate of the power
$\partial_t\en$ we define the \emph{power functional}
\begin{equation}
\label{power}
\Ptname: \graph(\diffname) \to \R \quad  \text{ Borel-measurable}
\end{equation}
satisfying suitable properties given below. The typical choices
for the marginal subdifferential $\diffname$ and the associated power functional
$\Ptname$ in case of $\ene tP=\min \calI(t,\cdot,P)$ are 
\begin{align}
\label{diff:concrete}
&\diff tP= \big\{\, \rmD_P\calI(t,\varphi,P)\::\: \varphi \in
\calM(t,P)  \,\big\}, \quad \text{where }\calM(t,P):=\argmin
\calI(t,\cdot,P), \\
\label{eq:Pt.marg}
&\Pt tP\Xi = \sup\big\{\,\partial_t\calI(t,\varphi,P)  \::\:
\varphi\in\calM(t,P),\ \Xi = \rmD_P\calI(t,\varphi,P)  \,\big\}. 
\end{align}
In the abstract setting of Section \ref{s:3} we will not use this form, but in
the application to viscoplasticity we will show that these definitions
satisfy the assumptions \eqref{basic-1}--\eqref{eq:468} and
\eqref{eq:psi-sum-1}--\eqref{eq:psi-uni} given below.  

In what follows, we shall refer to the quintuple $\grasys$ as a
\emph{generalized gradient system}, which gives rise to the
generalized gradient-flow equation 
\eqref{abstract-dne}. We continue to use the short-hand $\Psi$ for the 
dissipation potential 
 $(\Psi_P)_{P \in
  \domain}$. 

Next we will discuss three notions of solutions for
$\grasys$ and a series of abstract conditions 
under which it is possible to obtain existence results for such
concepts.

\subsection{Solution concepts}
\label{su:3.1}

Our first notion of solution consists of the \emph{energy-dissipation
  inequality} (EDI) \eqref{e:weak-enineq} below. As it will be clear
from the the proof of Theorem \ref{thm:abstract-2} ahead, this is the
weakest notion of solution arising in the limit of the
\emph{Minimizing Movement scheme} under the typical assumptions of
lower semicontinuity, coercivity, and closedness on the pair
$(\cE,\Psi)$ in the \emph{variational approach to gradient
flows}, cf.\ \cite{DeMaTo80PEMS, Ambr95MM, AmGiSa05GFMS, RosSav06GFNC,
  MiRoSa13NADN}. For our time-dependent situation we will need
some extra condition on the power $\Ptname$.  

\begin{definition}[EDI solution]
\label{def:weak-sols}
We say that a function $P \in \AC ([0,T]; X)$ is a \emph{EDI
  solution} (i.e.\ it satisfies the Energy-Dissipation Inequality)
to the generalized gradient system $\grasys$ if there exist
\begin{enumerate}
\item a function  $\mathscr{E} \in \BV ([0,T])$ such that
\begin{equation}
\label{pointwise-energies}
 \begin{aligned} 
  &\mathscr{E}(t) \geq \en (t,P(t)) \text{ for all }t\in [0,T], \text{ and} 
 \\
  &\mathscr{E}(s) =  \en(s,P(s)) \text{ for a.a. }s \in (0,T) 
  \text{ and for }s=0;
 \end{aligned} 
\end{equation}
\item a function 
$\Xi \in L^1 (0,T;X^*)$ with
\begin{equation}
\label{xi-in-diff}
\Xi (t) \in
\diff t{P(t)} \quad \foraa   t \in (0,T);
\end{equation}
\end{enumerate}
such that
the  \emph{energy-dissipation inequality (EDI)} holds for all $0 \leq s \leq
t \leq T$ 
\begin{equation}
\label{e:weak-enineq}
\text{(EDI)} \quad 
 \mathscr{E}(t)  + \int_s^t\!\! \left( \Psi_{P(r)}(\dot P(r)) {+}
   \Psi_{P(r)}^* (-\Xi(r)) \right) \!\dd r  \leq 
 \mathscr{E}(s)  + \int_s^t \!\Pt{r}{P(r)}{\Xi(r)} \dd r\,.
\end{equation}
\end{definition}

\begin{remark}\slshape\label{rem:scrE} 
Clearly, from 
\eqref{pointwise-energies} and \eqref{e:weak-enineq} we deduce
\begin{equation}
\label{true-EDI} 
\begin{aligned}
 \nonumber
\ene t{P(t)} + \int_s^t\!\! \Big( \Psi_{P(r)}(\dot P(r))& {+}
  \Psi_{P(r)}^* (-\Xi(r)) \Big) \dd r  \\
  &  \leq 
\ene s{P(s)}   {+} \int_s^t \!\Pt{r}{P(r)}{\Xi(r)} \dd r\\
&  \quad \text{for all }  t\in [0,T], \text{ for } s=0,  \text{ and }
\foraa   s \in (0,t).
\end{aligned}
\end{equation}
Furthermore, 
since the triple $(P,\Xi,\mathscr{E})$ also fulfills the  energy-dissipation inequality in the differential form
\begin{equation}
\label{Eweak-ineq}
\dot{\mathscr{E}}(t) - \Pt{t}{P(t)}{\Xi(t)} \leq -  \Psi_{P(t)}(\dot P(t)) -
\Psi_{P(t)}^* (-\Xi(t))  \quad \foraa   t \in (0,T), 
\end{equation}
taking into account \eqref{pointwise-energies}, we infer the
distributional inequality
\begin{equation}
\label{Edistrib}
\frac{\dd }{\dd t }\ene t{P(t)}  +\Psi_{P(t)}(\dot P(t)) +
\Psi_{P(t)}^* (-\Xi(t)) \ \leq  \  \Pt{t}{P(t)}{\Xi(t)} 
  \quad \text{in }
\mathscr{D}'(0,T). 
\end{equation}
\end{remark}

\begin{remark}[Brenier's dissipative solutions]\slshape   Our notion of EDI
  solutions is closely related to the notion of \emph{dissipative
    solutions} introduced in \cite[Sec.\,5.1]{Bren15COTC}. There the
  dual dissipation potential $\Psi_P^*({-}\Xi)$ is estimated from
  below which leads to the
  estimate 
\begin{align*}
\ene t{P(t)} + \int_0^t \Big(\Psi_{P(r)}(\dot P(r))
- \pairing{}{X}{\Xi(r)}{z(r)} - \Psi_{P(r)}(z(r)) \Big)\dd r\ \\
 \leq \ 
\ene 0{P(0)} + \int_0^t \Pt r{P(r)}{\Xi(r)}\dd r,
\end{align*}
which is supposed to hold for a suitable dense set of test functions
$z:[0,T]\to X$. The advantage in \cite{Bren15COTC} is that the 
difficult pairing $ \pairing{}{X}{\Xi(r)}{z(r)}  $ can be treated more
efficiently for good test functions. 
\end{remark}

All functions $P:[0,T]\to
X$ satisfying 
 the differential inclusion
\eqref{abstract-dne}  are called \emph{DNE solutions}, since
they solve the doubly nonlinear equation. Note that for EDI solutions,  in general  we are  not able to establish  \eqref{abstract-dne},  while 
 DNE solutions 
need not
fulfill  EDI.  
We now present the much stronger concept of \emph{EDB solutions},
which are contained in the intersection of EDI and DNE solutions, see
Figure \ref{fig:Sol}. Indeed we ask that EDI holds as an
\emph{equality}, namely  the abstract Energy-Dissipation Balance (EDB)
\eqref{abstract-enid}. In our abstract nonconvex setting, the validity
of the doubly nonlinear differential inclusion \eqref{abstract-dne}
and of the EDB are strongly linked, which justifies our the choice of
the following definition.

\begin{definition}[EDB solution]
\label{def:energy-sols}
We say that a function $P \in \AC ([0,T]; X)$ is an \emph{EDB
  solution} to the generalized gradient system $\grasys$ if there
exists $\Xi \in L^1 (0,T;X^*)$ such that the pair $(P,\Xi)$ solves the
generalized gradient-flow equation \eqref{abstract-dne}, i.e.
  \begin{equation}
  \label{pointwise-sense}
  \Xi(t) \in \diff{t}{P(t)} \quad \text{and}   
  \quad 0 \in \partial \Psi_{P(t)}(\dot P(t))
  +\Xi(t) \quad \text{ for almost all } t \in (0,T), 
  \end{equation}
and fulfills, for all $0 \leq s \leq t \leq T$,
  the \emph{energy-dissipation balance}
\begin{equation}
\label{abstract-enid}
\text{(EDB)}\qquad \left\{
 \begin{aligned} 
  \en(t,P(t)) + \int_s^t\!\! \left( \Psi_{P(r)}(\dot P(r)) + \Psi_{P(r)}^*
   (-\Xi(r)) \right) \!\dd r\ \\
   = \en(s,P(s)) + \int_s^t \Pt{r}{P(r)}{\Xi(r)} \dd r\,.
 \end{aligned}\qquad \right.
\end{equation}
\end{definition}

Observe that the distributional inequality \eqref{Edistrib}
reformulates  as
\begin{equation}
\label{almost-solution}
\begin{aligned}
\Psi_{P(t)}(Q) - \Psi_{P(t)}(\dot P(t))   \geq \int_{\Omega}
\pairing{}{\amb}{-\Xi(t)}{Q} \dd x +  \frac{\dd }{\dd t }\ene t{P(t)}
- \Pt{t}{P(t)}{\Xi(t)} 
 \end{aligned}
\end{equation}
for all $Q \in X$. This highlights the fact that the
chain-rule inequality
\[
 \frac{\dd }{\dd t }\ene t{P(t)}- \Pt{t}{P(t)}{\Xi(t)}  \geq
 \int_{\Omega}\pairing{}{\amb}{\Xi(t)} { \dot P(t)} \dd x, 
 \]
  which would lead to the inequality
 \[
 \Psi_{P(t)}(Q) - \Psi_{P(t)}(\dot P(t)) \geq \int_{\Omega}
 \pairing{}{\amb}{{-}\Xi(t))}{Q{-}\dot P(t)} \dd x \text{ for all } Q
 \in X,
 \]
 is in fact the missing ingredient to conclude from
 \eqref{almost-solution} that the pair $(P,\Xi)$ is a pointwise
 solution to \eqref{abstract-dne} in the sense of
 \eqref{pointwise-sense}.  Indeed, our next result states that every
  EDI solution turns out to be an  EDB  solution if the
 functional $\en$ satisfies a suitable chain-rule inequality with
 respect to the triple $(\Psi,\diffname, \Ptname)$, namely

\begin{definition}[Chain-rule inequality (CRI)]\label{def:ChainRule}\slshape 
We say that the gradient system $\grasys$ satisfies the 
\emph{chain-rule inequality}, if for every $P \in
\AC([0,T];\amb)$ and $\Xi\in L^1(0,T;\ambs)$ 
with 
\begin{equation}
  \label{conditions-1}
  \begin{aligned}
  &
\sup_{t \in (0,T)} |\en(t,P(t))
|<\infty, \qquad  \Xi(t)\in \diff t{P(t)}\ \ \text{for a.a.\ }t\in
(0,T), 
\\
&
  \int_0^T\Psi_{P(t)}(\dot P(t)) \dd t<\infty,\quad \text{and}\quad 
    \int_0^T\Psi_{P(t)}^*(-\Xi(t))\dd t<\infty,
  \end{aligned}
  \end{equation}
we have that 
\begin{equation}
    \label{eq:48strong}
    \text{(CRI)} \quad 
    \begin{aligned}
        &\text{the map }  t\mapsto \en( t,P(t)) 
           \text{ is absolutely continuous and}
   \\
      &    \frac \rmd{\rmd t} \en(t,P(t)) \geq 
       \pairing{}{\amb}{\Xi(t)}{\dot P(t)}+\, \Pt{t}{P(t)}{\Xi(t)}
        \quad   \text{for a.a.\ }t\in (0,T).
    \end{aligned}
  \end{equation}
\end{definition}

\begin{proposition}
\label{prop:weak-to-en}
Assume that $\grasys$ satisfies the CRI \eqref{eq:48strong}.
Then every EDI solution $P \in  \AC ([0,T]; X) $
 is an EDB solution.
\end{proposition}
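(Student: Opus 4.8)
The plan is to use the chain-rule inequality \eqref{eq:48strong} to turn the energy--dissipation \emph{inequality} \eqref{e:weak-enineq} into the \emph{balance} \eqref{abstract-enid}, and then to read off the pointwise subdifferential inclusion \eqref{pointwise-sense} from the Fenchel equivalence \eqref{eq:Fenchel} together with the one-sided Young--Fenchel inequality. In this way the two defining ingredients of an EDB solution --- the EDB \eqref{abstract-enid} and the doubly nonlinear inclusion \eqref{abstract-dne}/\eqref{pointwise-sense} --- both drop out once the CRI upgrades ``$\le$'' to ``$=$''.

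First I would check that an EDI solution $P$, together with its companion $\Xi\in L^1(0,T;\ambs)$ and $\mathscr E\in\BV([0,T])$ from Definition \ref{def:weak-sols}, meets the hypotheses \eqref{conditions-1} of Definition \ref{def:ChainRule}. The uniform bound $\sup_t|\en(t,P(t))|<\infty$ is immediate: $\mathscr E$ is bounded (being $\BV$), $\en(t,P(t))\le\mathscr E(t)$ for all $t$ by \eqref{pointwise-energies}, and $\en$ is bounded below by assumption. The condition $\Xi(t)\in\diff t{P(t)}$ a.e.\ is built into the definition of an EDI solution. Finally, testing \eqref{e:weak-enineq} with $s=0$, $t=T$ and using that $\mathscr E$ is bounded while $r\mapsto\Pt r{P(r)}{\Xi(r)}$ is integrable (by the standing assumptions on the power functional $\Ptname$) shows that $\int_0^T\big(\Psi_{P(r)}(\dot P(r))+\Psi_{P(r)}^*(-\Xi(r))\big)\dd r<\infty$, so both dissipation integrals in \eqref{conditions-1} are finite. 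Hence \eqref{eq:48strong} applies: $t\mapsto\en(t,P(t))$ is absolutely continuous and $\tfrac{\rmd}{\rmd t}\en(t,P(t))\ge\pairing{}{X}{\Xi(t)}{\dot P(t)}+\Pt t{P(t)}{\Xi(t)}$ for a.a.\ $t$.

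Next I would integrate this inequality over an arbitrary $[s,t]\subset[0,T]$ (legitimate by the absolute continuity just obtained) and add to it the Young--Fenchel inequality $\Psi_{P(r)}(\dot P(r))+\Psi_{P(r)}^*(-\Xi(r))\ge-\pairing{}{X}{\Xi(r)}{\dot P(r)}$, integrated on the same interval. This yields the \emph{reverse} energy--dissipation inequality
\[
\en(t,P(t))+\int_s^t\big(\Psi_{P(r)}(\dot P(r))+\Psi_{P(r)}^*(-\Xi(r))\big)\dd r\ \ge\ \en(s,P(s))+\int_s^t\Pt r{P(r)}{\Xi(r)}\dd r
\]
for every $0\le s\le t\le T$. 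On the other hand, by \eqref{e:weak-enineq} combined with \eqref{pointwise-energies} (see Remark \ref{rem:scrE}) the opposite inequality ``$\le$'' holds with $\en$ in place of $\mathscr E$ for $s=0$ and for a.a.\ $s\in(0,t)$. Since $s\mapsto\en(s,P(s))$ is continuous (by the absolute continuity established above) and the two integral terms depend continuously on $s$, equality then holds on a dense set of starting points $s$ and hence on all of $[0,t]$; for each fixed $t$ this is exactly the EDB \eqref{abstract-enid}.

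Finally, to recover \eqref{pointwise-sense}, I would take \eqref{abstract-enid} with $s=0$, substitute the resulting identity $\en(t,P(t))-\en(0,P(0))=\int_0^t\Pt r{P(r)}{\Xi(r)}\dd r-\int_0^t\big(\Psi_{P(r)}(\dot P(r))+\Psi_{P(r)}^*(-\Xi(r))\big)\dd r$ into the integrated chain-rule inequality, and obtain
\[
\int_0^t\big(\Psi_{P(r)}(\dot P(r))+\Psi_{P(r)}^*(-\Xi(r))+\pairing{}{X}{\Xi(r)}{\dot P(r)}\big)\dd r\ \le\ 0\qquad\text{for all }t\in[0,T].
\]
Since the integrand is nonnegative by Young--Fenchel, it must vanish for a.a.\ $r$, i.e.\ $\Psi_{P(r)}(\dot P(r))+\Psi_{P(r)}^*(-\Xi(r))=-\pairing{}{X}{\Xi(r)}{\dot P(r)}$ a.e.; by the Fenchel equivalence \eqref{eq:Fenchel} this is $0\in\partial\Psi_{P(r)}(\dot P(r))+\Xi(r)$ a.e., which together with $\Xi(r)\in\diff r{P(r)}$ gives \eqref{pointwise-sense}. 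Thus $P$ is an EDB solution. I expect the only genuinely delicate point to be the verification of the integrability conditions \eqref{conditions-1} in the first step --- i.e.\ ensuring that the power functional $\Ptname$ is controlled by the energy well enough that the chain rule \eqref{eq:48strong} and all the subsequent integrations are legitimate; the remainder is the standard ``EDI $+$ reverse inequality via the chain rule $\Rightarrow$ EDB'' scheme, augmented by the pointwise Fenchel equivalence.
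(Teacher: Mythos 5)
Your proposal is correct and follows essentially the same route as the paper's proof: combine the EDI (with $s=0$) and the integrated chain-rule inequality, observe that the resulting integrand $\Psi_{P}(\dot P)+\Psi_P^*(-\Xi)+\pairing{}{\amb}{\Xi}{\dot P}$ is nonnegative and has nonpositive integral, conclude it vanishes a.e., and read off \eqref{pointwise-sense} via Fenchel equivalence and the EDB from the saturation of all inequalities. Your additional verification of the hypotheses \eqref{conditions-1} and the continuity argument extending the equality to all $s$ are careful elaborations of steps the paper leaves implicit, not a different method.
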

\begin{proof}
  From \eqref{true-EDI} written for $s=0$ we obtain
 \begin{equation}
 \label{chain-eqs}
 \begin{aligned}
&\en(t,P(t)) + \int_0^t \left( \Psi_{P(r)}(\dot P(r)) + \Psi_{P(r)}^* (-\Xi(r)) \right) \dd r  \\  &\leq
\en(0,P(0)) + \int_0^t \!\Pt{r}{P(r)}{\Xi(r)} \dd r
\ \leq \ \en(t,P(t)) + \int_0^t \!\!\pairing{}{\amb}{{-}\Xi(r)}{\dot P(r)} \dd r,
\end{aligned}
 \end{equation}
where the latter estimate is due to the CRI
\eqref{eq:48strong}. Thus,  we get 
\[
 \int_0^t \left( \Psi_{P(r)}(\dot P(r)) + \Psi_{P(r)}^* (-\Xi(r))  - \pairing{}{\amb}{{-}\Xi(r)}{\dot P(r)}  \right) \dd r \leq 0 \quad \text{for every } t \in [0,T].
\]
Since the integrand is non-negative by an elementary inequality from
convex analysis, we conclude that $\Psi_{P(t)}(\dot P(t)) +
\Psi_{P(t)}^* (-\Xi(t)) - \pairing{}{\amb}{{-}\Xi(t)}{\dot P(t)} = 0 $
for a.a.\ $t\in (0,T)$, whence $-\Xi(t) \in \partial \Psi_{P(t)}(\dot
P(t)) $ for a.a.\ $t\in (0,T)$. Taking into account
\eqref{xi-in-diff}, we conclude \eqref{pointwise-sense}. The above
arguments also yield that all inequalities in \eqref{chain-eqs} hold
as equalities for every $t\in (0,T]$, whence the EDB \eqref{abstract-enid}.
\end{proof}

\subsection{Assumptions for the abstract theory}
\label{su:3.2}
We now detail the exact conditions on the generalized gradient
system $\grasys$, under which
the existence of EDI solutions to the Cauchy problem for can be
obtained via the techniques developed in \cite{MiRoSa13NADN}.

We define $\domain(t):=\dom \ene t{\cdot} =\{\,P\in
X\::\: \ene tP < \infty\,\}$ and assume 
\[
\domain:=\domain(0)=\domain(t) \quad \text{for all }
t \in [0,T]. 
\]
Hereafter, we shall use the notation
\begin{equation}
\label{not-g} \cg P :  = \inf_{t \in [0,T]} \en(t,P)  \quad \text{for
  all }P \in \domain 
\end{equation}
For the energy functional $\en$
we require the following conditions, where we will  
use $C$, $C'$, $C_0$, $K_1$ etc.\ for various
positive constants depending only on known quantities.  

\begin{description}
\item[Lower semicontinuity:]
\begin{equation}
\label{basic-1}
\tag{2.$\mathrm{E}_0$}
\begin{aligned}
 &\text{the map } P\mapsto \en (t,{P}) \  \text{is
lower semicontinuous for all $t \in [0,T]$, }
\\
&\exists\, C_0>0\, \ \forall\, (t,u) \in [0,T] \times \domain
 : \ \en (t,P) \geq \cg P\geq  C_0, \text{ and }
\\
&\mathrm{graph}(\diffname) \text{ is a Borel set in } [0,T] \times \amb
\times \ambs. 
 \end{aligned}
\end{equation}
(Note that, if $\en$ is bounded from below, then we can suppose
without loss of generality that it is bounded by a strictly positive
constant.) 

\item[Coercivity:] For all $t \in [0,T]$
\begin{equation}
  \label{eq:17-bis}
  \tag{2.$\mathrm{E}_1$}
 \text{the map}\quad
   P\mapsto \en(t, P) \quad\text{has compact sublevels in $\amb$.}
\end{equation}

\item[Variational sum rule:]
  If for some $P_*\in \amb$ and $\tau>0$ the point $\bar P$ is a minimizer of
  $P\mapsto \en (t,P)+\tau\Psi_{P_*}((P-P_*)/\tau)$, then  $\bar P$
  fulfills the Euler-Lagrange equation 
  \begin{equation}
    \label{eq:42-bis}
   \tag{2.$\mathrm{E}_2$}
    \exists \,  \Xi\in \diff t{\bar P} \,:\quad -\Xi\in \partial 
    \Psi_{P_*}((\bar P- P_*)/\tau).
  \end{equation}

\item[Lipschitz  continuity with respect to $t \in {[0,T]}$:]
\begin{equation}
\label{eq:diffclass_a}
\tag{2.$\mathrm{E}_3$}
 \begin{aligned}
  &\exists\, K_1>0 \ \forall\, P \in  \domain  \
   \forall\, t,s \in [0,T]: 
   \quad |\en(t,P) -\en(s,P)| \leq
  K_1 \cg P \,|t{-}s|.
\end{aligned}
  \end{equation}

\item[Conditioned differentiability with respect to
  $t\in{[0,T]}$:] There exists a power functional $\Ptname:
  \mathrm{graph}(\diffname) \to \R$, which is a Borel map and
  which satisfies 
\begin{align}
\label{e:ass-p-a}
\tag{2.$\mathrm{E}_4$}
&\exists\, K_2>0 \  \  \forall\, (t,P,\Xi) \in
\mathrm{graph}(\diffname)\,: \\  
&\nonumber
\liminf_{h \down 0}
\frac1{h}\big(\en(t{+}h,P) {-} \en(t,P)\big) \leq \Pt{t}{P}{\Xi} 
\leq \lim_{h\down 0} \frac1h\big( \ene tP{-}\ene{t{-}h}P\big) \leq
K_2\cg{P}\,. 
\end{align}

\item[Weak closedness of $(\en,\diffname,\Ptname)$:] For all $t \in
  [0,T]$ and for all sequences $ (P_n)_n$ in $\amb $, $ (\Xi_n)_n$ in 
  $\ambs$ satisfying $\ \Xi_n\in \diff{t}{P_n}$, $\ \EE_n=\en(t,P_n)$, 
  $\ \mathscr{P}_n =\Pt{t}{P_n}{\Xi_n}$ as well as 
\[
   P_n\to P \ \ \text{in $\amb$,}  \quad  \Xi_n\weakto \Xi  \ \ 
   \text{weakly in  $\ambs$,} \quad
   \mathscr{P}_n \to \mathscr{P}  \ \ \text{and}  \ \  \EE_n\to \EE 
      \ \ \text{in }\R, 
\]
we have 
 \begin{equation}
 \label{eq:468}
 \tag{2.$\mathrm{E}_5$}
     \Xi\in \diff tP,\quad  \mathscr{P}\leq \Pt{t}{P}{\Xi},\quad
      \text{and} \quad \EE=\en(t,P).
 \end{equation}
\end{description}

Gronwall's lemma, \eqref{basic-1}, and  \eqref{eq:diffclass_a}
provide also an 
upper bound for $\ene tP$, namely
\begin{equation}
\label{energy-used-later}
\forall\, P \in \domain \, : \quad \cg P \leq
\inf_{t\in[0,T]} \ene tP \leq \sup_{t\in[0,T]}\en(t,P)  \leq 
\exp(K_1T)\,\cg P. 
\end{equation}
Condition \eqref{e:ass-p-a} is fitted to the power
functional $\Ptname$ that is given in \eqref{eq:Pt.marg}, where only left
and right derivatives are defined, see
\cite{KnZaMi10CGPM,MiRoSa13NADN}. In simpler  situations  where 
$t\mapsto \ene tP$ is differentiable, we just have to use $\Pt
tP{\Xi}=\partial_t \ene tP$. 

For the dissipation potentials $(\Psi_P)_{P \in
  \domain}$ we require the following conditions: 
\begin{description}
\item[Convexity and lower semicontinuity:] For every $P \in \domain$
  the function 
\begin{align}
 \label{eq:psi-sum-1}
 \tag{2.$\Psi_1$}
 &\Psi_P: \amb \to
 [0,\infty) \ \ 
\text{is l.s.c., convex, and satisfies }\Psi_P(0)=0.
\end{align}

\item[Superlinearity:] The potentials $(\Psi_P)_{P \in \domain}$ and
  $(\Psi_P^*)_{P \in \domain}$ 
have uniform superlinear growth on sublevels of $\en$, viz. 
\begin{align}
 \label{basic-psi-2}
\tag{2.$\Psi_2$}
\begin{aligned}
 \forall\, \SE>0\,: \ \  \left\{\begin{array}{ll}
    \displaystyle \lim_{\normam{V}\to \infty} \Big(\frac{1}{\normam{V}}
    \inf_{\cg P \leq \SE} \Psi_P(V) \Big) =\infty, \\[0.8em]
\displaystyle \lim_{\normams{\Xi}\to \infty}\Big(
\frac{1}{\normams{\Xi}} \inf_{\cg P \leq \SE} \Psi_P^*(\Xi)
 \Big) =\infty.
\end{array}\right.
\end{aligned}\qquad
\end{align}

\item[Mosco continuity:] The map $P \mapsto \Psi_P$ is continuous on
  sublevels of $\en$ in the sense of Mosco convergence (cf.\
  \cite{Atto84VCFO}), i.e.\
  for all $\SE>0$ we have  
\begin{align}
 \label{basic-psi-3}
\tag{2.$\Psi_3$}
\left. \begin{array}{c}
  P_n\to P \text{ in }\amb, \ \  \cg {P_n}\leq \SE,   \\[0.5em]
  V_n\weakto V \text{ in }\amb,\ \ \Xi_n\weakto \Xi \text{ in }\ambs
  \end{array} \right\}\ \ 
 \Longrightarrow \ \ 
\left\{ \begin{array}{l}
  \liminf\limits_{n\to\infty}\Psi_{P_n}(V_n)\ge \Psi_P(V),
\\[0.7em]
  \liminf\limits_{n\to\infty}\Psi_{P_n}^*(\Xi_n)\ge \Psi_P^*(\Xi).
  \end{array}\right.
\end{align}

\item[Univaluedness of $\Psi^*_P$:]  For all $P\in \domain$ we have 
 \begin{align}
  \label{eq:psi-uni}
   \tag{2.$\Psi_4$} 
   \forall\, V \in \amb \ \forall\, \Xi_1,\, \Xi_2 \in
  \partial \Psi_P(V)\,: \quad \Psi_P^*(\Xi_1) =
  \Psi_P^*(\Xi_2).
  \end{align}
\end{description}

As pointed out in \cite[Rem.\,2.1]{MiRoSa13NADN}, condition
\eqref{eq:psi-uni}  is satisfied if $\Psi_P$ is given by the sum of 
positively homogeneous or differentiable, convex potentials. 
 
\subsection{Existence of EDI and EDB solutions}
\label{su:ExistSol}

In \cite[Thm.\ 4.4]{MiRoSa13NADN} it is proved that
\eqref{basic-1}--\eqref{eq:468},
\eqref{eq:psi-sum-1}--\eqref{eq:psi-uni} and the chain-rule inequality  CRI
\eqref{eq:48strong} imply the existence of EDB solutions for the
Cauchy problem \eqref{abstract-dne}--\eqref{initial-P}. However, for
the application to the viscoplasticity system \eqref{pde}, it will be
crucial to establish an independent result on the existence of EDI
solutions, cf.\ Theorem \ref{thm:abstract-2}, because we are not able
to establish the  CRI  in the general case. Although
its proof can be  inferred from the argument for \cite[Thm.\
4.4]{MiRoSa13NADN}, we will briefly outline it for the sake of
readability.

\begin{theorem}[EDI solutions]
\label{thm:abstract-2} Assume that $\grasys$  complies with
\eqref{basic-1}--\eqref{eq:468} and
\eqref{eq:psi-sum-1}--\eqref{eq:psi-uni}.  Then, for every $P_0 \in
\domain$ there exists an EDI solution to the Cauchy problem
\eqref{abstract-dne}--\eqref{initial-P}.
\end{theorem}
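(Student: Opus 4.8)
The plan is to realize EDI solutions as limits of the \emph{Minimizing Movement} (De Giorgi) scheme, following closely \cite[Sec.~4]{MiRoSa13NADN}. Fix a partition $0=\sfti\tau0<\sfti\tau1<\dots<\sfti\tau N=T$ of fineness $\tau$, set $\PPt\tau0:=P_0$, and solve recursively the incremental problems \eqref{time-incremental-intro}. A minimizer $\PPt\tau n$ exists at every step by the direct method: the functional $P\mapsto\tau\Psi_{\PPt\tau{n-1}}\big((P{-}\PPt\tau{n-1})/\tau\big)+\ene{\sfti\tau n}P$ is bounded below and sequentially weakly lower semicontinuous on $\amb$ by \eqref{basic-1} and \eqref{eq:psi-sum-1}, and its sublevels are weakly (sequentially) compact since \eqref{eq:17-bis} controls $\ene{\sfti\tau n}{\cdot}$ while the superlinearity \eqref{basic-psi-2} of $\Psi_{\PPt\tau{n-1}}$ controls $\normam{P{-}\PPt\tau{n-1}}$. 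The variational sum rule \eqref{eq:42-bis} then selects, for each $n$, an element $\Xi_\tau^n\in\diff{\sfti\tau n}{\PPt\tau n}$ with $-\Xi_\tau^n\in\partial\Psi_{\PPt\tau{n-1}}\big((\PPt\tau n{-}\PPt\tau{n-1})/\tau\big)$, so that by the Fenchel equivalence \eqref{eq:Fenchel} and the univaluedness \eqref{eq:psi-uni} the selected forces realize equality in the Young--Fenchel inequality at each node.

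To obtain the \emph{sharp} discrete estimate one works, as usual, with the De Giorgi variational interpolant $\pwm P\tau$ and the associated interpolated force: differentiating the value function of the perturbed minimization problem in the interpolation parameter and using \eqref{eq:Fenchel}, one derives for the interpolants the discrete energy-dissipation inequality
\[
\ene{\sfti\tau n}{\PPt\tau n}+\int_{\sfti\tau{n-1}}^{\sfti\tau n}\!\!\Big(\Psi_{\upwc P\tau(r)}\big(\dot{\pwm P\tau}(r)\big)+\Psi_{\upwc P\tau(r)}^*\big({-}\pwc\Xi\tau(r)\big)\Big)\dd r\le\ene{\sfti\tau{n-1}}{\PPt\tau{n-1}}+\int_{\sfti\tau{n-1}}^{\sfti\tau n}\!\!\Pt r{\pwc P\tau(r)}{\pwc\Xi\tau(r)}\dd r,
\]
where the right-hand power term and the replacement of $\ene{\sfti\tau n}{\PPt\tau{n-1}}$ by $\ene{\sfti\tau{n-1}}{\PPt\tau{n-1}}$ plus a time-integral are handled through the conditioned differentiability \eqref{e:ass-p-a}; the Lipschitz estimate \eqref{eq:diffclass_a} together with a discrete Gronwall argument then yields the uniform two-sided energy bound \eqref{energy-used-later} along the whole scheme. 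Summing over $n$ gives the discrete EDI on $[0,t]$ and, using the second superlinearity in \eqref{basic-psi-2}, the uniform bounds $\int_0^T\Psi_{\upwc P\tau}(\dot{\pwm P\tau})\dd r\le C$ and $\int_0^T\Psi_{\upwc P\tau}^*({-}\pwc\Xi\tau)\dd r\le C$.

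Passing to the limit $\tau\down0$: the energy bound \eqref{energy-used-later} and coercivity \eqref{eq:17-bis} give, via a refined Arzel\`a--Ascoli argument exploiting the first superlinearity in \eqref{basic-psi-2} (hence equi-integrability of $\normam{\dot{\pwm P\tau}}$), a not-relabelled subsequence with $\pwc P\tau(t),\pwm P\tau(t),\pwl P\tau(t)\to P(t)$ in $\amb$ for every $t$ and $P\in\AC([0,T];\amb)$; the second superlinearity plus the Dunford--Pettis/de la Vall\'ee-Poussin criterion give $\pwc\Xi\tau\weakto\Xi$ in $L^1(0,T;\ambs)$; and a Helly-type selection on the (a.e.\ monotone up to an absolutely continuous correction) discrete energies produces $\mathscr E\in\BV([0,T])$ with $\pwc{\mathscr E}\tau(t)\to\mathscr E(t)$ for every $t$ and $\mathscr E(t)\ge\en(t,P(t))$. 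Taking the $\liminf$ in the discrete EDI, one controls the two dissipation terms by the Mosco-type lower semicontinuity \eqref{basic-psi-3} (combined with convexity for the time-integrated velocity term), the limit power term and the inclusion $\Xi(t)\in\diff t{P(t)}$ for a.a.\ $t$ by the weak closedness \eqref{eq:468}, and the final energy by lower semicontinuity; this yields \eqref{e:weak-enineq} with the $\BV$ function $\mathscr E$, i.e.\ an EDI solution to \eqref{abstract-dne}--\eqref{initial-P}.

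The main obstacle is precisely this last limit passage, and in particular the invocation of \eqref{eq:468}, which requires along the scheme and at a.a.\ fixed $t$ not merely $\pwc P\tau(t)\to P(t)$ and $\pwc\Xi\tau(t)\weakto\Xi(t)$, but also the \emph{convergence} of the energies $\en(t,\pwc P\tau(t))\to\en(t,P(t))$ and of the powers $\Pt t{\pwc P\tau(t)}{\pwc\Xi\tau(t)}\to\mathscr P(t)$. The energy convergence has to be recovered by the by-now classical \emph{no-gap} argument: the EDI valid in the limit forces all the intervening lower-semicontinuity inequalities to be equalities, whence $\mathscr E(t)=\en(t,P(t))$ for a.a.\ $t$ and the desired convergence along the selected subsequence. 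Carrying this out \emph{simultaneously} with the selection of $\Xi$, the identification of $\mathscr E$, and the passage from nodal times $\sfti\tau n\to t$ to the fixed $t$ in \eqref{eq:468} (using the Lipschitz bound \eqref{eq:diffclass_a}), while keeping careful track of the exceptional null-sets in time being discarded, is the technical heart of the proof. The only further point requiring care — the one-sided nature of the time-derivative bounds in \eqref{e:ass-p-a} — is absorbed by systematically using the power functional $\Ptname$ in place of $\partial_t\en$, exactly as the abstract framework $\grasys$ is designed to allow.
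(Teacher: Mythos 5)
Your proposal follows the same overall strategy as the paper (Minimizing Movements, De Giorgi variational interpolant, discrete EDI, compactness, limit passage via the closedness condition), but the step you yourself flag as ``the technical heart'' contains a genuine gap rather than a difficulty you resolve. You extract $\pwm \Xi\tau\weakto\Xi$ in $L^1(0,T;\ambs)$ and then invoke \eqref{eq:468} to get $\Xi(t)\in\diff t{P(t)}$ for a.a.\ $t$; but \eqref{eq:468} is a \emph{pointwise-in-time} closedness condition, and weak $L^1$ convergence in time does not give, for a.a.\ fixed $t$, weak convergence in $\ambs$ of $\pwm\Xi\tau(t)$ to $\Xi(t)$ (nor convergence of the powers) along any subsequence. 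As stated, that step fails. The paper's remedy is a Young-measure argument (\cite[App.\,A, Thm.\,A.3]{MiRoSa13NADN}): one passes to a limiting Young measure $(\mu_t)_t$ for the pairs $(\pwm\Xi{\tau_k}(t),\Pt t{\pwm P{\tau_k}(t)}{\pwm\Xi{\tau_k}(t)})$, which is concentrated for a.a.\ $t$ on the weak limit points, hence (by \eqref{eq:468}) on the set $\mathcal S(t,P(t))=\{(\Xi,\mathfrak p):\Xi\in\diff t{P(t)},\ \mathfrak p\le\Pt t{P(t)}\Xi\}$; one then bounds $\int\Psi^*_{P}({-}\pwm\Xi{\tau})-\Pt{}{}{}$ from below by the corresponding integral against $\mu_t$, and finally defines $\Xi(t)$ by a \emph{measurable selection} minimizing $\Psi^*_{P(t)}({-}\Xi)-\mathfrak p$ over $\mathcal S(t,P(t))$. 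Without this (or an equivalent device) the admissibility of the limit force and the lower semicontinuity of the $\Psi^*-\Ptname$ term cannot both be secured.

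A second, smaller inaccuracy: the identification $\mathscr E(t)=\en(t,P(t))$ for a.a.\ $t$ is not obtained in the paper by a ``no-gap'' argument closing the EDI chain. Rather, it is a direct consequence of \eqref{eq:468} itself, whose \emph{conclusion} (not hypothesis) includes $\EE=\en(t,P)$: for a.a.\ $t$ the dissipation bound and the superlinearity \eqref{basic-psi-2} give $\liminf_k\|\pwl{\dot P}{\tau_k}(t)\|_X<\infty$, hence (uniform boundedness of $\partial\Psi_P$ on energy sublevels) boundedness of the Euler--Lagrange forces $\pwc\Xi{\tau_k}(t)$, and applying \eqref{eq:468} along these yields the energy convergence. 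Relatedly, in your discrete EDI the dual term should involve the force $\pwm\Xi\tau$ associated with the variational interpolant (not the nodal $\pwc\Xi\tau$), and the primal term the derivative of the piecewise \emph{linear} interpolant $\pwl P\tau$, since $\pwm P\tau$ need not be absolutely continuous.
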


As a straightforward consequence of Proposition \ref{prop:weak-to-en}
we have the following result. 

\begin{corollary}[EDB solutions, cf.\ {\cite[Thm.\,4.4]{MiRoSa13NADN}}]
\label{cor:abstract-1}
If additionally to the assumption of Theorem \ref{thm:abstract-2}
the CRI \eqref{eq:48strong} holds, then for every $P_0 \in
\domain$ there exists an  EDB  solution to the Cauchy problem
\eqref{abstract-dne}--\eqref{initial-P}.
\end{corollary}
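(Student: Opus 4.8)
The plan is to read the statement off as the composition of two facts already in place: the existence of an EDI solution provided by Theorem~\ref{thm:abstract-2}, and the upgrade ``EDI implies EDB under the CRI'' provided by Proposition~\ref{prop:weak-to-en}.

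First I would apply Theorem~\ref{thm:abstract-2}. Its hypotheses are exactly \eqref{basic-1}--\eqref{eq:468} together with \eqref{eq:psi-sum-1}--\eqref{eq:psi-uni}, all of which are among the present assumptions; hence, for the prescribed $P_0\in\domain$, it furnishes an EDI solution, i.e.\ a function $P\in\AC([0,T];X)$ admitting $\mathscr{E}\in\BV([0,T])$ and $\Xi\in L^1(0,T;\ambs)$ as in Definition~\ref{def:weak-sols} and satisfying the energy-dissipation inequality \eqref{e:weak-enineq}. Next I would invoke Proposition~\ref{prop:weak-to-en}: the only additional hypothesis it needs is the CRI \eqref{eq:48strong}, which is assumed here, so it applies to the solution $P$ just produced and yields that $P$ is an EDB solution, i.e.\ it satisfies the doubly nonlinear inclusion \eqref{pointwise-sense} and the energy-dissipation balance \eqref{abstract-enid} for all $0\le s\le t\le T$. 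This completes the argument.

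The one place deserving a moment's care is the verification that the EDI solution produced by Theorem~\ref{thm:abstract-2} meets the standing conditions \eqref{conditions-1} that form the premise of the CRI. The uniform bound $\sup_{t}|\en(t,P(t))|<\infty$ follows from $\en(t,P(t))\le\mathscr{E}(t)$ in \eqref{pointwise-energies}, the fact that $\mathscr{E}\in\BV([0,T])$ is bounded, and the lower bound $\en\ge C_0>0$ from \eqref{basic-1}; the selection property $\Xi(t)\in\diff t{P(t)}$ for a.a.\ $t$ is \eqref{xi-in-diff}; and the finiteness of $\int_0^T\Psi_{P(r)}(\dot P(r))\dd r$ and $\int_0^T\Psi^*_{P(r)}(-\Xi(r))\dd r$ follows by taking $s=0$ and $t=T$ in \eqref{e:weak-enineq}, whose right-hand side is finite because $\Pt r{P(r)}{\Xi(r)}\le K_2\cg{P(r)}\le K_2\sup_r\en(r,P(r))<\infty$ by \eqref{e:ass-p-a}, while the two integrands on the left are nonnegative. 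With \eqref{conditions-1} in force, the chain of inequalities \eqref{chain-eqs} used inside the proof of Proposition~\ref{prop:weak-to-en} collapses to a chain of equalities, which simultaneously gives the Fenchel identity $-\Xi(t)\in\partial\Psi_{P(t)}(\dot P(t))$, hence \eqref{pointwise-sense}, and the EDB \eqref{abstract-enid}.

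Since every step is a direct appeal to a result already established, there is no genuine obstacle here; the substantive content lies entirely in Theorem~\ref{thm:abstract-2} (construction of the Minimizing-Movement approximants, passage to the limit via the closedness condition \eqref{eq:468} and the Mosco continuity \eqref{basic-psi-3}) and in the chain-rule mechanism of Proposition~\ref{prop:weak-to-en}. The corollary is merely their combination.
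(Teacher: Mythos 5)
Your proposal is correct and follows exactly the paper's route: the corollary is obtained by combining Theorem \ref{thm:abstract-2} (existence of an EDI solution) with Proposition \ref{prop:weak-to-en} (CRI upgrades EDI to EDB), which is precisely what the paper states as a ``straightforward consequence.'' Your additional verification that the EDI solution satisfies the premises \eqref{conditions-1} of the CRI is a sensible piece of diligence that the paper leaves implicit.
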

\begin{proof}[Sketch of the proof of Thm.\ \ref{thm:abstract-2}]
  First of all, we set up the time-discretization scheme
  for \eqref{abstract-dne}: Given a partition $\{ 0=\sfti \tau 0<\sfti
  \tau 1<\ldots <\sfti \tau N =T\}$ of the interval $[0,T]$, with
  constant time-step $\tau = T/N$, we construct discrete solutions
  $(\PPt \tau n)_{n=0}^N\subset \amb$ starting from $\PPt \tau 0:=
  P_0$ and finding
\begin{equation}
\label{time-incremental}
\PPt \tau n \in \argmin_{P \in X} \left( \tau \Psi_{\PPt \tau {n-1}}
  \left(\frac{P -\PPt\tau{n-1}}{\tau}  \right)  +  \ene{\sfti \tau
    n}{P}\right) . 
\end{equation}
We thus define the approximate solutions to \eqref{abstract-dne}
via interpolation of the values  $(\PPt \tau n)_{n=0}^N,$ with
\[
\left\{
\begin{array}{lll}
\pwc P\tau (t) &  := \PPt \tau n  & \text{ for $t\in (\sfti \tau {n-1},\sfti\tau n]$},
\\
 \upwc P\tau (t)  & := \PPt \tau {n-1}  &  \text{ for $t\in [\sfti \tau {n-1},\sfti\tau n)$},
 \\
   \pwl P\tau(t)  & := \frac{t-\sfti \tau {n-1}}{\tau} \PPt \tau n +
   \frac{\sfti \tau {n} - t}{\tau} \PPt \tau {n-1}  & \text{ for $t
     \in [\sfti \tau{n-1},\sfti \tau n]$.} 
   \end{array}
   \right.
\]
Using the variational sum rule \eqref{eq:42-bis} and 
the Euler-Lagrange equation for \eqref{time-incremental} yields
\begin{equation}
\label{approx-eq-interp}
0\ \in \ \partial\Psi_{\upwc P\tau(t)} (\pwl {\dot P}{\tau}(t)) +
\diff {\pwc{\sft}{\tau}(t)} {\pwc P\tau (t)} \ \  \subset  \ambs
\quad \foraa   t \in (0,T), 
\end{equation}
where $\pwc {\mathsf{t}}\tau$ denotes the piecewise constant
interpolant associated with the partition.

Due to the lack of convexity of $\en(t,\cdot)$, it turns out that the
approximate version \eqref{approx-eq-interp} of \eqref{abstract-dne}
is not sufficient for the limit passage to the time-continuous
level. One needs the finer information provided by the
approximate EDI
\begin{equation}
\label{approx-edi}
\begin{aligned}
 &\en(\pwc {\mathsf t}\tau (t), \pwc P\tau (t))  + \int_{\pwc {\mathsf
     t}\tau (s)}^{\pwc {\mathsf t}\tau (t)} \left( \Psi_{\upwc P\tau
     (r)}(\pwl{\dot P}\tau(r)) + \Psi_{\upwc{P}\tau(r)}^* ({-}\pwm
   \Xi\tau(r)) \right) \dd r \\
& \leq \en(\pwc {\mathsf t}\tau (s), \pwc P\tau (s))  + 
 \int_{\pwc {\mathsf t}\tau (s)}^{\pwc {\mathsf t}\tau (t)} \Pt{r}{\pwm P\tau(r)}{\pwm
  \Xi\tau(r)} \dd r\,  
\end{aligned}
\end{equation}
for every $0 \leq s \leq t \leq T$.  Its proof (cf.\ \cite[Lemma 6.1]{MiRoSa13NADN})   relies on condition
\eqref{eq:psi-uni} and on De Giorgi's \emph{variational interpolant} for the values
$(P_\tau^n)_{n=1}^N$ (cf.\ \cite{Ambr95MM,
  AmGiSa05GFMS}), which is defined via $\pwm P \tau (0):= P_0$ and
\begin{equation}
\label{def-var-interp}
\pwm P\tau (t)\in  \argmin_{P \in X} \left( (t-\sfti\tau{n-1})
  \Psi_{\PPt \tau {n-1}} \left(\frac{P
      -\PPt\tau{n-1}}{t-\sfti\tau{n-1}}  \right)  +
  \ene{t}{P}\right) \ \ \text{ for } t \in (t_\tau^{n-1}, t_\tau^n). 
\end{equation}
Then, $\pwm \Xi\tau : (0,T) \to \ambs$ in \eqref{approx-edi}  is a
measurable selection with 
$\pwm \Xi\tau (t) \in \diff {t}{ \pwm P\tau(t)}$ for a.a.\ $t \in
(0,T)$  and satisfying the Euler-Lagrange equation for the minimum problem
\eqref{def-var-interp}.

Continuing from \eqref{approx-edi} and exploiting the positivity of
$\Psi$ and $\Psi^*$, one derives a series of a priori
  estimates for the families $(\pwc P\tau)_\tau$, $(\pwl P\tau)_\tau$,
$(\pwm P\tau)_\tau$ and $(\pwm \Xi\tau)_\tau$, namely 
\begin{equation}
\label{estimates-sec-3}
\sup_{t\in [0,T]} \big(\cg{\pwc P\tau(t)}{+}\cg{\pwm
  P\tau(t)}\big)\leq C, \quad  \int_0^T \!\!\left( \Psi_{\upwc P\tau
    (r)}(\pwl{\dot P}\tau(r)) {+} \Psi_{\upwc{P}\tau(r)}^* ({-}\pwm
  \Xi\tau(r)) \right) \dd r \leq C 
\end{equation}
for a positive constant $C$, uniformly with respect to
$\tau>0$. Hence, there exist $P \in \AC ([0,T];\amb)$ and
$\widetilde\Xi \in L^1(0,T;\ambs)$ such that along a sequence $\tau_k
\down 0$ we have 
\begin{subequations}
\label{convergences-3}
\begin{align}
&
\label{convergences-3-1}
\pwc P{\tau_k}, \, \pwl P{\tau_k}, \, \pwm P{\tau_k} \to P  && \text{ in } L^\infty (0,T; X),
\\
\label{convergences-3-2}
& \pwl {\dot P}{\tau_k}\weakto \dot P && \text{ in } L^1 (0,T; X),
\\
&
\label{convergences-3-3}
\pwm {\Xi}{\tau_k} \weakto \widetilde{\Xi} && \text{ in } L^1(0,T; X^*)
\end{align}
where the strong convergence with values in $X$ derives from the
energy bound in \eqref{estimates-sec-3} and the compactness of energy
sublevels (cf.\ \eqref{eq:17-bis}), while
\eqref{convergences-3-2}--\eqref{convergences-3-3} are a consequence
of the integral bound of the dissipative terms $\Psi_{\upwc P\tau
  (r)}(\pwl{\dot P}\tau(r)) $ and $ \Psi_{\upwc{P}\tau(r)}^* ({-}\pwm
\Xi\tau(r)) $, and of the superlinear growth \eqref{basic-psi-2} of
$\Psi$ and $\Psi^*$.  From \eqref{estimates-sec-3} it also follows
that $\sup_{t\in (0,T)} | \Pt{t}{\pwm P\tau(t)}{\pwm \Xi\tau(t)} |\leq
C$ via \eqref{e:ass-p-a}.  Moreover, Helly's principle also yields
that there exists a function $\EE \in \mathrm{BV}([0,T])$ such that
\begin{equation}
\label{convergences-3-4}
\lim_{\tau_k \down 0}  \en(\pwc {\mathsf t}{\tau_k} (t), \pwc
P{\tau_k} (t)) = \EE(t) \quad \text{for every } t \in [0,T]. 
\end{equation}
\end{subequations}
Taking into account that $\lim_{\tau_k \down 0} \en(\pwc {\mathsf
  t}{\tau_k} (t), \pwc P{\tau_k} (t)) \geq \ene t{P(t)}$ by the lower
semicontinuity of the map $t \mapsto \ene t{P}$, combined with the
fact that $| \en(\pwc {\mathsf t}{\tau_k} (t), \pwc P{\tau_k} (t)) -
\en(t, \pwc P{\tau_k} (t)) | \to 0 $ thanks to \eqref{eq:diffclass_a},
we deduce the first inequality in \eqref{pointwise-energies}.

To pass to the time-continuous limit it is necessary to gain further
insight into the limiting properties of the sequences $ (\pwm
\Xi{\tau_k} )_{\tau_k}$ and $( \Pt{t}{\pwm P{\tau_k}}{\pwm
  \Xi{\tau_k}})_{\tau_k}$ by exploiting a Young-measure argument, see
\cite[App.\,A]{MiRoSa13NADN}.  A (not relabeled) subsequence $ (\pwm \Xi{\tau_k}, \Pt{t}{\pwm
  P{\tau_k}}{\pwm \Xi{\tau_k}})_{\tau_k}$ possesses  a limiting Young 
measure $\bm\mu = (\mu_t)_{t\in (0,T)}$, such that for almost all $t\in
(0,T)$ the probability measure $\mu_t$ on $\ambs \times \R$ is 
concentrated on the set of the weak limit points of the sequence $
(\pwm \Xi{\tau_k}(t), \Pt{t}{\pwm P{\tau_k}(t)}{\pwm
  \Xi{\tau_k}(t)})_{\tau_k} $.  Taking into account the weak
closedness property \eqref{eq:468} it follows that for almost all
$t\in (0,T)$ we have $\mu_t ((\ambs {\times} \R) {\setminus}
\mathcal{S}(t,P(t))) =0$, where 
\[
\mathcal{S}(t,P(t)) = \{ (\Xi, \mathfrak{p} ) \in \ambs \times \R\, : \
\Xi \in \diff{t}{P(t)}, \ \mathfrak{p} \leq \Pt t{P(t)}{\Xi} \}.
\]

Moreover, for every subinterval $[s,t]\subset [0,T]$ the Mosco
convergence  \eqref{basic-psi-3} implies 
 \begin{equation}
 \label{YM-lsc}
\begin{aligned}
&\liminf_{k\to \infty} \int_s^t \! \left(  \Psi_{\upwc{P}{\tau_k}(r)}^*
  ({-}\pwm \Xi{\tau_k}(r)) {-} \Pt r{\pwm
    P{\tau_k}(r)}{\pwm\Xi{\tau_k}(r)} \right) \!\dd r 
\\
&\qquad \geq \int_s^t \int_{\ambs \times \R}\! \left( \Psi_{P(r)}^*
  ({-}\Xi)-\mathfrak{p} \right)\!  \dd \mu_r (\Xi,\mathfrak{p}) \dd r,  
\end{aligned}
\end{equation}
cf.\ \cite[Thm.\ A.3]{MiRoSa13NADN}. Finally, by \eqref{eq:468} and
classical selection techniques there exists a measurable selection $t
\mapsto (\Xi(t), \mathfrak{p}(t))$ with
\begin{equation}
\label{meas-sel-argument}
(\Xi(t), \mathfrak{p}(t)) \in \mathcal{S}(t,P(t)) \text{ and }
\Psi_{P(t)}^*(-\Xi(t)) - \mathfrak{p}(t) =\! \min_{(\Xi,p) \in \mathcal{S}(t,P(t))}
\!\!\left(  \Psi_{P(t)}^* ({-}\Xi)  {-} \mathfrak{p}\right)
\end{equation}
for all $t\in [0,T]$.  From this it is not difficult to deduce that
$\Xi \in L^1 (0,T;\ambs)$.  Moreover, for every $[s,t]\subset [0,T]$
we obtain
\[
\begin{aligned}
 & \EE(t) + \int_s^t \Psi_{P(r)}(\dot P(r)) \dd r + \int_s^t \left(   \Psi_{P(r)}^*({-}\Xi(r)) {-}  \Pt {r}{P(r)}{\Xi(r)} \right) \dd r
\\
& 
 \stackrel{(1)}{\leq}
\EE(t)   + \int_s^t \Psi_{P(r)}(\dot P(r)) \dd r + 
 \int_s^t \left(   \Psi_{P(r)}^*({-}\Xi(r)) {-} \mathfrak{p}(r) \right) \dd r 
\\
 & \stackrel{(2)}{\leq}
\EE(t)   + \int_s^t \Psi_{P(r)}(\dot P(r)) \dd r + \int_s^t   \int_{\ambs \times \R}\left(  \Psi_{P(r)}^* ({-}\Xi) - \mathfrak{p}  \right) \dd \mu_r (\Xi,\mathfrak{p}) \dd r
\\
&
\stackrel{(3)}{\leq}
 \lim_{k \to \infty}  \en(\pwc {\mathsf t}{\tau_k} (t), \pwc P{\tau_k} (t))
   + \liminf_{k \to \infty}  \int_{\pwc {\mathsf t}{\tau_k} (s)}^{\pwc {\mathsf t}{\tau_k} (t)}  \Psi_{\upwc P{\tau_k} (r)}(\pwl{\dot P}{\tau_k}(r)) \dd r
 \\
 & \qquad 
  +  \liminf_{k\to \infty} \int_{\pwc {\mathsf t}{\tau_k}
  (s)}^{\pwc {\mathsf t}{\tau_k} (t)}  \left(
  \Psi_{\upwc{P}{\tau_k}(r)}^* ({-}\pwm \Xi{\tau_k}(r)) {-} \Pt r{\pwm
    P{\tau_k}(r)}{\pwm\Xi{\tau_k}(r)} \right) \dd r 
 \\
 &
\stackrel{(4)}{\leq}   \liminf_{k \to \infty}  \en(\pwc {\mathsf
  t}{\tau_k} (s), \pwc P{\tau_k} (s))\  \stackrel{(5)}{=} \ \EE(s),
\end{aligned}
\]
where {\footnotesize (1)} 
 follows from the definition of the set $\mathcal{S}(\cdot,P(\cdot))$,  
{\footnotesize (2)} from the minimality
\eqref{meas-sel-argument} of the selection and {\footnotesize (3)}
from the convergences 
\eqref{convergences-3-1}--\eqref{convergences-3-2} combined with
\eqref{basic-psi-3}, the energy convergence \eqref{convergences-3-4},
and the lower semicontinuity estimate \eqref{YM-lsc}. Finally,
{\footnotesize (4)} follows from passing to the limit $\tau_k\down 0$
in the approximate energy-dissipation inequality
\eqref{approx-edi}, and {\footnotesize  (5)} again from \eqref{convergences-3-4}.

It remains to show the second assertion in \eqref{pointwise-energies}, namely
that 
\begin{equation} 
\label{cv3-5}
\lim_{\tau_k \down 0}  \en(\pwc {\mathsf t}{\tau_k} (t), \pwc P{\tau_k}(t))  
= \ene t{P(t)} \quad \text{ for almost all $t\in (0,T)$.}
\end{equation}
To this aim, we observe that, thanks to estimate
\eqref{estimates-sec-3} and the uniform superlinear growth
\eqref{basic-psi-2} of the potentials $\Psi_P$, we have
$\liminf_{k\to\infty} \| \pwl{\dot P}{\tau_k}(t) \|_X <\infty$ for
almost all $t \in (0,T)$. Using now the superlinearity of $\Psi_P^*$
(cf.\ \eqref{basic-psi-2} again) we see that the multivalued maps
$\partial \Psi_P : X \rightrightarrows X^*$ are uniformly bounded on
energy sublevels. Therefore $\liminf_{k\to\infty} \| \pwc
{\Xi}{\tau_k}(t)\|_{X^*}<\infty$, where $\pwc \Xi{\tau}$ fulfills
$\pwc \Xi{\tau}(t) \in \diff {\pwc{\sft}{\tau}(t)} {\pwc P\tau (t)}
\cap (- \partial\Psi_{\upwc P\tau(t)} (\pwl {\dot P}{\tau}(t)) )$ for
almost all $t\in (0,T)$, cf.\ the Euler-Lagrange equation
\eqref{approx-eq-interp}. Then we can apply the closedness condition
\eqref{eq:468} and conclude that $\liminf_{k\to\infty} \en(\pwc
{\mathsf t}{\tau_k} (t), \pwc P{\tau_k} (t)) = \ene t{P(t)}$ for a.a.\
$t \in (0,T)$. In view of \eqref{convergences-3-4}, we infer
\eqref{cv3-5}.

All in all, we have shown that the pair $(P,\Xi)$ satisfies
\eqref{pointwise-energies} and \eqref{xi-in-diff}, and the above chain
of inequalities yields the EDI \eqref{e:weak-enineq}. This concludes
the proof of Theorem \ref{thm:abstract-2}.
\end{proof}

\section{Assumptions and main result}
\label{s:4}
Prior to listing all our assumptions on the energy functionals and on
the dissipation potentials involved in the viscoplastic system, we fix
some notation and recall some useful identities and inequalities that
shall be used throughout.

\begin{notation}[Matrices]\slshape\label{not:4.1}
  We recall the notation $\GLD:= \{ A \in \R^{d\times d} \, : \
  \det(A)>0\}$.  By $\mathrm{tr}(A)$ we shall denote the trace of a
  matrix $A \in \R^{d\times d}$, by $\cof(A)$ its cofactor matrix, by
  $\mathbb M_s(A) $ the matrix in $\R^{\binom{d}{s}\times
    \binom{d}{s}}$ consisting of all minors (subdeterminants) of $A$
  of order $s$, and by $\mathbb M : \R^{d\times d}\to R^{\mu_d}$ the
  function which maps a matrix to all its minors, with $\mu_d :=
  \sum_{s=1}^d \binom{d}{s}^2 $.  We endow $\R^{d\times d}$ with the
  inner product
\begin{equation}
\label{inner}{ A  : B} \, : = \mathrm{tr}(A \transp{B}) = \sum_{i,j
  =1}^d  a_{ij} b_{ij}, \ \ 
\text{  giving  the \emph{Frobenius norm }} \ \  
|A| : = \sqrt{A : A}.
\end{equation}
Given $r>0$, we shall use the notation
\begin{equation}
\label{neighborhood}
 \mathcal{N}_r: = \left\{N \in \R^{d \times d}\, : \
|N-\mathbf{1}| <r \right\}.
\end{equation}
\end{notation}
\medskip

\noindent {\bf Preliminaries:} We point out for later use that
\begin{equation}
\label{useful-inner} A {:} (BC) = \mathrm{tr}(A \transp{C}
\transp{B}) = (A \transp{C}) : B \quad \text{for all $A,\,B,\,C
\in \R^{d\times d}$.}
\end{equation}
Furthermore, it follows from definition \eqref{inner} that for every $A,\,B \in
\R^{d\times d}$ we have $|A B| \leq |A| |B|$. Combining this with
Young's inequality, we deduce for all $A,\,B \in \R^{d\times d}$ the
following estimate
\begin{equation}
\label{useful-later} |A B^{-1}| \geq \frac{|A|}{|B|} \geq  C r
|A|^{1/r} - (r{-}1) C^{r/{(r-1)}} |B|^{1/{(r-1)}} \quad \text{for all
$r >1$ and all $C>0$.}
\end{equation}
We will also use that
\begin{equation}
\label{cofactor-repre} \inve{A} = \frac1{\det(A)} \transp{\cof(A)}
\quad \text{for all $A \in \GLD$.}
\end{equation}
and that the cofactor matrix  is the G\^ateau derivative of the determinant, i.e.
\begin{equation}
\label{cof-der}
\rmD_A (\det(A)) = \cof(A) \quad \text{for all } A \in \R^{d\times d}.
\end{equation}
\medskip

\noindent {\bf Setup:} In what follows, we consider
  a  bounded domain $\Omega\subset \R^d$,   with
Lipschitz boundary $\Gamma$, and denote by $\Gamma_{\mathrm{Dir}}$ the Dirichlet part of the boundary,
 assumed to have  positive surface measure.

In order to highlight the main features of our analysis and avoid overburdening it with technicalities,
throughout most of the paper we will impose on 
$\Gamma_{\mathrm{Dir}}$ 
  time-independent   boundary conditions  $\varphi_{\mathrm{Dir}}$ for  the deformation field; 
  in Section
\ref{s:7} we will outline how our results   generalize to  time-dependent Dirichlet conditions. 
Thus,
the space for the kinematically
admissible deformations is
\begin{equation}
\label{admissible-phi} \mathcal{F}= \left\{ \varphi \in
W^{1,\qphi}(\Omega;\R^d)\, : \ \varphi = \varphi_{\mathrm{Dir}}  \ \text{on
$\Gamma_{\mathrm{Dir}}$} \right\} \ \ \text{for  $\qphi>1$ to be
specified later.}
\end{equation}
The space for the internal variable $P$ is
\begin{equation}
\label{admissible-P}
\begin{aligned}
\mathcal{P}= \{ P \in W^{1,\qg}(\Omega;\R^{d\times d})
\, : \   P(x)\in \GLD \ \foraa   x
\in \Omega \} 
\end{aligned}
\end{equation}
with  $\qg>1$  specified
later.
\medskip

\noindent
\textbf{Assumptions on the stored energy:} The functional $\calI:
[0,T]\times W^{1,\qphi}(\Omega;\R^d) \times L^p(\Omega;\R^{d\times d})
\to (-\infty,\infty]$ has the form
\begin{equation}
  \label{stored-energy} \calI(t,\varphi,P) := \calE_1 (P) + \calI_2
  (t,\varphi,P) \text{ for }
  (t,\varphi,P) \in [0,T]\times W^{1,\qphi}(\Omega;\R^d) \times 
   L^p(\Omega;\R^{d\times d}). 
\end{equation}
We now list the conditions on $\calE_1$ and $\calI_2$ and postpone to
Example \ref{ex-energy} the discussion of a concrete energy $\calI$
complying with all of them.

First of all, $\calE_1 : 
 L^p(\Omega;\R^{d\times d}) \to (-\infty,\infty]$ is defined by
\begin{equation}
\label{ene1} \calE_1 (P) = \left\{
\begin{array}{lll}
\int_{\Omega} H(P(x),\nabla P(x)) \dd x & \text{if $P \in \calP$,}
\\
\infty  & \text{otherwise,}
\end{array}
\right.
\end{equation}
where the function $H: \R^{d\times d} \times \R^{d\times d \times
d} \to (-\infty,\infty]$  is given by
\begin{equation}
\label{form-of-h} H(P,A) = K(P) +  \frac1{\qg} |A|^{\qg} \quad
\text{with } \qg>d.
\end{equation}
We impose that  the hardening function   $K : \R^{d\times d} \to (-\infty,\infty]$
complies with the following conditions: 
\begin{align}
&\tag{3.$\mathrm{K}_1$} \label{kappa2}
\text{$K$ is of class   $\rmC^2$   in $\GLD$, and }
\\
&\label{kappa1}  \tag{3.$\mathrm{K}_2$} \exists\, C_1,\,C_2>0 \
\exists\,  \qp,\, \qga >d  \ \forall\, P \in \GLD\, : \ K(P) \geq
C_1 \left( |P|^{\qp} {+}\det(P)^{-\qga}\right) {-}C_2\,.
\end{align}

The functional $\calI_2: [0,T]\times W^{1,\qphi}(\Omega;\R^d) \times
 X
 \to (-\infty,\infty]$ is defined by
\begin{equation}
\label{I2} \calI_2 (t,\varphi,P): = \left\{
\begin{array}{lll}
\int_{\Omega} W(x,\nabla \varphi(x) \inve{P(x)})  \dd x \,  - \, 
\pairing{}{W^{1,\qphi}\OOd}{\ell(t)}{\varphi}
 & \text{if
$(\varphi,P) \in \calF \times \calP$,}
\\
\infty  & \text{otherwise,}
\end{array}
\right.
\end{equation}
where the external loading $\ell : [0,T] \to
W^{1,\qphi}(\Omega;\R^d)^*$ fulfills
\begin{equation}
\label{loading}
\tag{3.$\mathrm{L}$}
\ell \in \rmC^1 ([0,T];W^{1,\qphi}(\Omega;\R^d)^*).
\end{equation}
In most applications the time-dependent loading  consists  of   the work of
volume and surface forces, i.e.\ 
$  \pairing{}{W^{1,\qphi}\OOd}{\ell(t)}{\varphi}  =
\int_\Omega f(t,x) \cdot \varphi(x) \dd x + \int_{\Gamma_\neu} h(t,x)\cdot
\varphi(x) \dd x $ where  $\Gamma_\neu$  is  the Neumann part of the boundary
$\Gamma = \partial\Omega$.  

We require the following conditions on the elastic energy $W:
\Omega \times \R^{d \times d} \to [0,\infty]$:
\begin{align}
  \label{w0} \tag{3.$\mathrm{W}_0$} & \dom(W) = \Omega \times \GLD,
  \text{ i.e. }  W(x,F) = \infty \text{ for } \mathrm{det}F \leq 0
  \text{ for all $x \in \Omega$},
  \\
\label{w1} \tag{3.$\mathrm{W}_1$} &
\begin{aligned}
&
 \exists\, j \in L^1 (\Omega) \,
\exists\, \qf> d  \, \exists \, C_3>0 \,  \forall (x,F) \in \dom(W):
\, W(x,F) \geq j(x) + C_3 |F|^{\qf};
\end{aligned}
 \intertext{for all $x \in
\Omega$ the functional $W(x,\cdot): \R^{d\times d} \to
(-\infty,\infty]$ is \emph{polyconvex}, i.e.\ it is a convex function of its minors (cf.\ Notation
\ref{not:4.1}). Namely,
} & \label{w2}
\tag{3.$\mathrm{W}_2$}
\begin{aligned}
&
 \exists\, \mathbb W: \Omega \times \R^{\mu_d} \to
(-\infty,\infty] \ \text{such that}
\\ &
\text{(i)} \ \mathbb W \ \text{is  a normal integrand,}
\\
& \text{(ii)} \  \forall\, (x,F) \in \Omega \times \R^{d \times d}\,
:
 \ \ W(x,F)= \mathbb W(x,\mathbb M (F)),
\\
& \text{(iii)} \ \forall x\in \Omega\,:  \  \ \mathbb W(x,\cdot):
\R^{\mu_d} \to (-\infty,\infty] \text{ is convex,}
\end{aligned}
\intertext{ and $W$ also satisfies} & \label{w3}
\tag{3.$\mathrm{W}_3$}
\begin{aligned}
& \exists\, \delta>0 \    \ \exists\, C_4,\, C_5>0    \ \  \forall\, x\in \Omega \ \forall\, F \in \GLD \ \forall\, N \in \mathcal{N}_\delta\,:
\\
&  \text{(i)} \  \text{$W(x,\cdot) : \mathrm{GL}^+(d) \to \R$ is differentiable,} 
\\
& \text{(ii)} \ |\transp{F}\rmD_F W(x,F)| \leq C_4 (W(x,F) +1), \text{ and }
\\
& \text{(iii)} \ |\transp{F} \rmD_F W(x,F) -  \transp{(FN)} \rmD_F
W(x,FN) | \leq C_5 |N {-}\mathbf{1}| (W(x,F) +1).
\end{aligned}
\end{align}

Along the footsteps of \cite{MaiMie09GERI}, we will refer to
condition  \eqref{w3}(ii)   as a \emph{multiplicative stress control}, for
it involves the ``multiplicative''
 \begin{equation}
 \label{mandel}
 \text{Mandel stress tensor }\quad \Man{F} := \transp{F}\rmD_F W(F),
 \end{equation}
 which is estimated, together with its variation (cf.\
 \eqref{w3}(iii)),   in terms of the energy $W$. This condition goes back
 to   \cite{Ball84MELE,BaOwPh91MPAP}   and has been  recently  publicized in 
 \cite{Ball02SOPE}. In Section \ref{s:7}, we will see that the
 treatment of  time-dependent Dirichlet boundary conditions  requires a
 multiplicative stress control condition on the Kirchhoff
 stress tensor $S(F):= \rmD_F W(F) \transp{F}$. 
   Let us also mention that both the stress control
 conditions \eqref{w3} and their analogs for the Kirchhoff stress
 tensor are compatible with polyconvexity and with the physical
 feasibility requirements that $W(x,F) = \infty $ for $\det(F) \leq
 0$, and $W(x,F) \to \infty $ for $\det(F) \down 0$, cf.\
 Example \ref{ex-energy} ahead.

\begin{remark}
\label{rmk:ext-Ball}
\slshape In fact, in \cite{MaiMie09GERI} estimate  \eqref{w3}(iii)  was
required in a much more general form, involving on the right-hand side
the term $\omega(|N {-}\mathbf{1}| )$, with the modulus of continuity
$\omega : [0,\infty) \to [0,\infty)$ nondecreasing, and such that
$\lim_{\rho\down 0} \omega(\rho) =0$.  The discussion in Section
\ref{s:5} (cf.\ Remark \ref{rmk:after-prelim4}) will show that in the
present context it would be possible to work with the following
H\"older estimate,  generalizing   \eqref{w3}(iii):  
\begin{equation}
\label{Ball-weak}
 \ |\Manx{x}{F}-  \Manx{x}{FN } | \leq C |N
 {-}\mathbf{1}|^{\alpha_W} (W(x,F) +1) 
 \quad \text{for some $\alpha_{W} \in (0,1]$}.
\end{equation}
\end{remark}
\medskip

\noindent
{\bf Assumptions on the dissipation potential:} 
The dissipation potential density $\rmetric : \Omega \times \R^{d \times d} \to
[0,\infty) $ is required to satisfy 
\begin{align}
&\tag{3.$\mathrm{R}_1$} \label{R1}
  \rmetric : \Omega
  \times \R^{d \times d} \to [0,\infty)
   \text{ is  a normal integrand,}
\\
&  \tag{3.$\mathrm{R}_2$} \label{R2}
\begin{aligned}
 \foraa   x \in \Omega: \quad   &  \rmetric(x,\cdot) : \R^{d
\times d} \to [0,\infty) \text{ is convex, with
$\rmetric(x,0)=0$, and}
\\
& \rmetric^*(x,\Xi_1) = \rmetric^*(x,\Xi_2) \ \text{for
all }  \Xi_j \in \partial \rmetric(x,V), \, j=1,2,
\text{ and } V \in \R^{d \times d},
\end{aligned}
\intertext{and  $\rmetric(x,\cdot)$ 
has a superlinear growth at infinity, uniformly with respect to  $x$, i.e.} &
 \tag{3.$\mathrm{R}_3$} \label{R3}
\begin{aligned}
\exists\, p\in (1,\infty)\, \ 
\exists\, C_{R}^1 \,  C_{R}^2,\,  C_{R}^3,\,  C_{R}^4     >0 \quad
   & \foraa   x \in \Omega \
\  \forall\, V \in\R^{d \times d}\, : \\ & 
C_R^1 |V|^p - C_R^2 \leq \rmetric(x,V) \leq C_R^3 |V|^p + C_R^4 .
\end{aligned}
\end{align}
Notice that in \eqref{R2} the symbols $\rmetric^* (x,\cdot)$ denotes
the Fenchel-Moreau conjugate of $\rmetric(x,\cdot)$, and that
\eqref{R3} implies
\begin{equation}
\label{growth-Rstar}
\widetilde{C}_R^1 |\Xi|^{p'} - \widetilde{C}_R^2 \leq \rmetric^*(x,\Xi) \leq
\widetilde{C}_R^3 |\Xi|^{p'} + \widetilde{C}_R^4  
\end{equation}
for some $\widetilde{C}_{R}^j >0$, where $p' =
\frac{p}{p-1}$.  In Example \ref{ex:R} we shall provide a normal
integrand $\rmetric :\Omega \times \R^{d \times d} \to [0,\infty)$
complying with \eqref{R1}--\eqref{R3}.\medskip

\noindent
\textbf{Setup of the generalized gradient system: energy and dissipation:}
Starting from the stored energy functional $\calI$ and from the
dissipation metric $\rmetric$, we now introduce the energy functional
$\en$ and the (Finsler) family of dissipation potentials
$(\Psi_P)_{P\in \domain}$ that will allow us to formulate the PDE
system \eqref{pde} as the abstract doubly nonlinear evolution equation
\eqref{abstract-dne} in the state space $X:=
L^p(\Omega;\R^{d\times d}).$

We consider the \emph{reduced} energy functional $\en: [0,T] \times X
\to (-\infty,\infty]$ obtained by minimizing out the deformations from
$\calI$.  Recalling $\calF$ from \eqref{admissible-phi} we set
\begin{equation}
\label{reduced-energy} \en (t,P) : = \inf\{ \calI(t,\varphi,P)\, : \
\varphi \in \calF\}.
\end{equation}
It is not difficult to check that $\dom(\en)$ is of the form $[0,T]
\times \domain$ for some $\domain \subset X $.  
 For this, we note that 
 $\en$ is given
by the sum of the functional $\calE_1$, which is independent of the
time-variable, and of a time-dependent, reduced functional $\calE_2
: [0,T]\times   X  
\to (-\infty,\infty]$,
 i.e.
\begin{equation}
\label{sum-energies} \en(t,P) = \calE_1 (P) +
\inf\{\calI_2(t,\varphi,P)\, : \ \varphi \in \calF \}  =:  \calE_1
(P) + \calE_2(t,P).
\end{equation}
In Lemma \ref{l:prelim1} we shall prove that the functional
$\calE_2$ is bounded from below. Therefore, $\en (t,P)<\infty$
implies $\calE_1 (P)<\infty$. Then,
 it follows from \eqref{form-of-h}, \eqref{kappa1}, and Poincar\'e's inequality that,
if $P \in \domain$, then $P \in W^{1,\qg}(\Omega;\R^{d\times d})$.
Since $\qg >d$,  this implies that $P \in \rmC^0
(\overline{\Omega};\R^{d\times d})$, hence $\cof(P) \in \rmC^0
(\overline{\Omega};\R^{d\times d})$. Combining this with formula
\eqref{cofactor-repre} and with the fact that $\det(P)^{-1} \in
L^{\qga}(\Omega)$ in view of \eqref{kappa1}, we conclude that
\begin{equation}
\label{domain} \domain \subset \left\{ P \in \calP\, : \inve{P} \in
 L^{\qga}(\Omega;\R^{d\times d})\right\}.
\end{equation}
In fact, we will even prove that any $P \in \domain $ satisfies
$\inve{P} \in \mathrm{C}^0(\overline{\Omega};\R^{d\times d})$, see
\eqref{enhanced-domain} ahead. 

We are now in the position to introduce the dissipation potentials $
\Psi_P : X \to [0,\infty) $, for $P \in \domain$. We set
\begin{equation}
\label{rdiss-def}
 \rdiss(x,P,V)
:= \rmetric(x,V \inve{P}) \quad \foraa   x \in \Omega, \ \text{for
all } (P,V) \in \GLD \times \R^{d \times d},
\end{equation}
and define
\begin{equation}
\label{def-Psip}
\begin{gathered}
\Psi_P (V): = \int_{\Omega} \rdiss(x,P(x),V(x))  \dd x  \quad \text{for
all $(P,V) \in \domain \times  X $}.
\end{gathered}
\end{equation}
Notice that the above formula is well-defined thanks to
\eqref{domain}.

While postponing to Section \ref{s:5} a thorough discussion of
assumptions \eqref{kappa2}--\eqref{kappa1}, \eqref{w1}--\eqref{w3},
and \eqref{R1}--\eqref{R3}, let us mention here that they 
ensure 
that the energy  $\en$ and the dissipation
potentials $(\Psi_P)_{P \in \domain}$ comply with the abstract
assumptions of Theorem \ref{thm:abstract-2}.  Thus we will deduce the
existence of EDI solutions (in the sense of Definition
\ref{def:weak-sols}) to \eqref{abstract-dne}.
\medskip

\noindent
{\bf Statement of the main result:}
In addition to  the assumptions listed above, 
Theorem
\ref{th:EDI.Viscopl} below  requires  two further conditions
suitably relating the growth exponents involved therein.

\begin{theorem}[Existence of EDI solutions for viscoplasticity]
\label{th:EDI.Viscopl}
Let the energy $\calI$ and the dissipation potentials $(\Psi_P)_{P \in \domain}$  fulfill
\eqref{kappa2}--\eqref{kappa1}, \eqref{loading},
\eqref{w1}--\eqref{w3}, and \eqref{R1}--\eqref{R3}.  Suppose in
addition that the exponents $\qphi,\, \qp, \, \qf, \, \qg, \, \qga$ in
\eqref{form-of-h}, \eqref{kappa1}, \eqref{loading}, and \eqref{w1}
comply with
\begin{align}
& \label{index-1} \frac1{\qphi} = \frac1{\qf} + \frac1{\qp}, \quad
\qphi > d,
\\
 & \label{index-2} \tilde{q} >d,  \ \text{where } \
 \frac1{\tilde{q}} := \frac2{\qga} + \frac1{\qg}.
\end{align}

Then, for every initial datum
\begin{equation}
\label{condition-datum}  P_0 \in \calP 
\end{equation}
there exist an EDI solution $(\varphi,P):[0,T]\to
W^{1,\qphi}(\Omega;\R^{d}) \times W^{1,\qg}(\Omega;\R^{d\times d})$ of
the associated generalized gradient system. These solutions satisfy $P
\in L^\infty (0,T;W^{1,\qg}(\Omega;\R^{d\times d})) \cap W^{1,p}(0,T;
L^p(\Omega;\R^{d \times d})) $ and $\varphi \in L^\infty
(0,T;W^{1,\qphi}(\Omega;\R^{d}))$. Moreover, $\Xi$ given by
\[
{\Xi}(t,x) =-\Delta_{\qg} P(t,x )  + \rmD K( P(t,x))
 + \Back{\nabla \varphi (t,x)}{P(t,x)} 
\]
(recalling the notation $\Back{\nabla \varphi }{P} = \Man{\nabla
  \varphi \inve{P}} \transpi{P}$), lies in $L^{p'}(0,T;L^{p'}
(\Omega;\R^{d \times d}))$ and the triple $(P,\varphi,\Xi)$ satisfies
the EDI
\begin{align}
 \nonumber
  \calI(t,\varphi(t),P(t)) + &\int_s^t \int_{\Omega}
  \left(\rmetric(x,\dot{P}(r,x) \inve{P(r,x)})+\rmetric^*(x, 
  ({-}{\Xi}(r,x)) \transp{P(r,x)})\right) \dd x \dd r
  \\ 
  &\label{concrete-enid}
  \leq \calI(s,\varphi(s),P(s)) -\int_s^t
   \pairing{}{W^{1,\qphi}\OOd}{\dell(r)}{\varphi(r)}\dd r
\end{align}
for all $t\in (0,T]$, almost all $s\in (0,t)$, and $s=0$. 
 \end{theorem}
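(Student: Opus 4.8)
The plan is to recognize the viscoplastic system as an instance of the abstract generalized gradient system $\grasys$ of Section~\ref{s:3}, with state space $\amb=L^p(\Omega;\R^{d\times d})$, reduced energy $\en$ as in \eqref{reduced-energy}, Finsler dissipation family $(\Psi_P)_{P\in\domain}$ as in \eqref{def-Psip}, and marginal subdifferential and power functional given by the concrete formulas \eqref{diff:concrete}--\eqref{eq:Pt.marg}; then to verify, under the hypotheses listed and the scaling relations \eqref{index-1}--\eqref{index-2}, all the structural conditions \eqref{basic-1}--\eqref{eq:468} and \eqref{eq:psi-sum-1}--\eqref{eq:psi-uni} of Theorem~\ref{thm:abstract-2} and to invoke it. This produces an EDI solution $P$ together with a selection $\Xi\in L^1(0,T;\ambs)$ and an energy $\mathscr{E}\in\BV([0,T])$. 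The regularity statement $P\in L^\infty(0,T;W^{1,\qg})\cap W^{1,p}(0,T;\amb)$, $\varphi\in L^\infty(0,T;W^{1,\qphi})$ and $\Xi\in L^{p'}(0,T;L^{p'})$ then follows from the a priori bounds \eqref{estimates-sec-3}, the uniform $\rmC^0$-bound on the minimizers $\varphi(t)$ extracted from the bound on $\en(t,P(t))$, and the growth \eqref{R3}--\eqref{growth-Rstar} of $\rmetric$ and $\rmetric^*$, after noting that $\Psi_P^*(\Xi)=\int_\Omega\rmetric^*(x,\Xi(x)\transp{P(x)})\dd x$, an identity obtained from \eqref{useful-inner} via the change of variables $V=WP$. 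Finally, writing out the abstract EDI \eqref{e:weak-enineq} with these explicit forms, and using $\mathscr{E}(t)\ge\en(t,P(t))=\calI(t,\varphi(t),P(t))$, the identity $\mathscr{E}(s)=\en(s,P(s))$ for $s=0$ and a.a.\ $s$, and $\Pt{r}{P(r)}{\Xi(r)}=\partial_t\calI(r,\varphi(r),P(r))=-\langle\dell(r),\varphi(r)\rangle$, gives precisely the concrete inequality \eqref{concrete-enid}.

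The verification of the conditions on $\en$ is by the direct method, essentially as in \cite{MaiMie09GERI}. Compactness of the sublevels of $P\mapsto\en(t,P)$ in $\amb$ (condition \eqref{eq:17-bis}) comes from the coercivity $K(P)\ge C_1(|P|^{\qp}+\det(P)^{-\qga})-C_2$ together with the term $\tfrac1{\qg}|\nabla P|^{\qg}$ with $\qg>d$, which bounds $P$ in $W^{1,\qg}(\Omega;\R^{d\times d})$ and hence, by compact Sobolev embedding, in $\rmC^0(\overline\Omega;\R^{d\times d})$; combining this with \eqref{cofactor-repre}, $\det(P)^{-1}\in L^{\qga}$ and the estimate $|\nabla(\inve P)|\le|\inve P|^2|\nabla P|$, condition \eqref{index-2} upgrades this to $\inve P\in W^{1,\tilde q}(\Omega;\R^{d\times d})\subset\rmC^0(\overline\Omega;\R^{d\times d})$, which is what makes the multiplicative quantities $\nabla\varphi\,\inve P$, $\dot P\,\inve P$ and the backstress well defined and stable under convergence. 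Lower semicontinuity and boundedness from below of $\en$ (condition \eqref{basic-1}) rest on the polyconvexity \eqref{w2}, on Lemma~\ref{l:prelim1}, and on the normal-integrand structure; the variational sum rule \eqref{eq:42-bis} is the Euler--Lagrange equation of the incremental minimum problem, computed at a minimizer $\varphi$ of $\calI(t,\cdot,\bar P)$; and the Lipschitz and conditioned-differentiability conditions \eqref{eq:diffclass_a}--\eqref{e:ass-p-a} use $\ell\in\rmC^1$ from \eqref{loading} together with the control $\|\varphi\|_{W^{1,\qphi}}\le C\,(\cg P{+}1)$, which follows from $|\nabla\varphi|\le|\nabla\varphi\,\inve P|\,|P|$, the coercivity $W(x,F)\ge j(x)+C_3|F|^{\qf}$, Young's inequality \eqref{useful-later} and the scaling relation \eqref{index-1}, a Danskin-type argument providing the required one-sided derivative sandwich for $t\mapsto\calE_2(t,P)$.

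As for the dissipation potentials, \eqref{eq:psi-sum-1} and \eqref{eq:psi-uni} are inherited pointwise from \eqref{R2}; the uniform superlinear growth \eqref{basic-psi-2} of $\Psi_P$ and $\Psi_P^*$ on energy sublevels follows from \eqref{R3}, \eqref{growth-Rstar}, the elementary estimates $|V\inve P|\ge|V|/|P|$ and $|\Xi\transp P|\ge|\Xi|/|\inve P|$, and the uniform $\rmC^0$-bounds on $P$ and $\inve P$ on such sublevels; and the Mosco continuity \eqref{basic-psi-3} follows from the strong convergences $P_n\to P$ and $\inve{P_n}\to\inve P$ in $\rmC^0(\overline\Omega)$ valid on energy sublevels (so that $V_n\inve{P_n}\weakto V\inve P$ and $\Xi_n\transp{P_n}\weakto\Xi\transp P$), combined with the sequential weak lower semicontinuity of the convex integral functionals $V\mapsto\int_\Omega\rmetric(x,V)\dd x$ and $\Xi\mapsto\int_\Omega\rmetric^*(x,\Xi)\dd x$.

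The heart of the proof, and the step I expect to be the main obstacle, is the weak-closedness condition \eqref{eq:468} for $(\en,\diffname,\Ptname)$. Given $P_n\to P$ in $\amb$, $\Xi_n\weakto\Xi$ in $\ambs$ with $\Xi_n\in\diff t{P_n}$, $\en(t,P_n)\to\EE$ and $\Pt t{P_n}{\Xi_n}\to\mathscr{P}$, the first move is to exploit the assumed energy convergence: the $P_n$ are bounded in $W^{1,\qg}$, so $P_n\to P$ and $\inve{P_n}\to\inve P$ in $\rmC^0(\overline\Omega)$, whence $\calE_2(t,P_n)=\min_\varphi\calI_2(t,\varphi,P_n)\to\calE_2(t,P)$, since competitors $\varphi$ may be frozen as $W(x,\cdot)$ is continuous on $\GLD$ and $\nabla\varphi\,\inve{P_n}\to\nabla\varphi\,\inve P$ uniformly. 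Hence $\calE_1(P_n)=\EE_n-\calE_2(t,P_n)\to\EE-\calE_2(t,P)$, and since $\calE_1$ is lower semicontinuous with strictly convex leading term $\tfrac1{\qg}\int_\Omega|\nabla P|^{\qg}\dd x$, this forces $\nabla P_n\to\nabla P$ strongly in $L^{\qg}$ and $\EE=\en(t,P)$; in particular the hardening-regularizing part $-\Delta_{\qg}P_n+\rmD K(P_n)$ of $\Xi_n$ converges weakly in $\ambs$ to $-\Delta_{\qg}P+\rmD K(P)$. It remains to pass to the limit in the backstress $\Back{\nabla\varphi_n}{P_n}=\Man{\nabla\varphi_n\,\inve{P_n}}\transpi{P_n}$, where $\varphi_n$ minimizes $\calI(t,\cdot,P_n)$ and $\Xi_n=\rmD_P\calI(t,\varphi_n,P_n)$: up to a subsequence $\varphi_n\weakto\varphi$ in $W^{1,\qphi}$, $\varphi$ is a minimizer of $\calI(t,\cdot,P)$ by polyconvexity, the multiplicative stress control \eqref{w3}(ii) provides an $L^1$-bound on the Mandel tensor $\Man{\nabla\varphi_n\,\inve{P_n}}$, and its stability under left multiplication \eqref{w3}(iii) permits identifying the weak limit by the argument of \cite[Lem.~4.11]{DaFrTo05QCGN}, adapted as in \cite{MaiMie09GERI}, which relies only on the minimality of $\varphi_n$ and on the lower semicontinuity of $\calI(t,\cdot,\cdot)$. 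This yields $\Back{\nabla\varphi_n}{P_n}\weakto\Back{\nabla\varphi}{P}$, hence $\Xi\in\diff t{P}$, while the remaining inequality $\mathscr{P}\le\Pt t{P}{\Xi}$ follows from the upper semicontinuity built into definition \eqref{eq:Pt.marg}. The scaling conditions \eqref{index-1}--\eqref{index-2} are exactly what guarantee $\varphi\in W^{1,\qphi}$ with $\qphi>d$ and $\inve P\in\rmC^0(\overline\Omega)$, so that every pairing and nonlinearity occurring in this argument is meaningful.
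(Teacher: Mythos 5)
Your overall strategy coincides with the paper's: cast the system as the generalized gradient system $(X,\en,\Psi,\diffname,\Ptname)$ with $X=L^p(\Omega;\R^{d\times d})$, verify \eqref{basic-1}--\eqref{eq:468} and \eqref{eq:psi-sum-1}--\eqref{eq:psi-uni}, and invoke Theorem~\ref{thm:abstract-2}. Your verifications of coercivity, lower semicontinuity, the time-dependence conditions, the properties of $(\Psi_P)_P$, and the weak $L^1$-convergence of the backstress via the Mandel bound \eqref{w3}(ii) and the perturbation technique of \cite{DaFrTo05QCGN} all match the paper's Lemmas~\ref{l:prelim0}--\ref{l:prelim5}.

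There is, however, a genuine gap in your treatment of the closedness condition \eqref{eq:468}, exactly where you deduce $\nabla P_n\to\nabla P$ strongly in $L^{\qg}$ and $\EE=\en(t,P)$. You argue that $\calE_1(P_n)=\EE_n-\calE_2(t,P_n)\to\EE-\calE_2(t,P)$ and that lower semicontinuity plus strict convexity of the leading term ``forces'' strong convergence and the energy identity. This is circular: lower semicontinuity only gives $\EE-\calE_2(t,P)\ge\calE_1(P)$, and (uniform) convexity of $A\mapsto|A|^{\qg}$ upgrades weak to strong convergence only once one already knows $\|\nabla P_n\|_{L^{\qg}}\to\|\nabla P\|_{L^{\qg}}$ --- which is equivalent to the identity $\EE=\en(t,P)$ you are trying to prove. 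Nothing used up to that point excludes an oscillating sequence with $\lim_n\calE_1(P_n)>\calE_1(P)$. The missing ingredient is the subdifferential information: since $\Xi_n\in\diff t{P_n}$ is bounded in $X^*$, $\rmD K(P_n)$ is bounded in $L^\infty$, and the Mandel stress control gives $\sup_n\|\Back{\nabla\varphi_n}{P_n}\|_{L^1}<\infty$ (cf.\ \eqref{even-better2}), a comparison in $\Xi_n=-\Delta_{\qg}P_n+\rmD K(P_n)+\Back{\nabla\varphi_n}{P_n}$ bounds $-\Delta_{\qg}P_n$ in $L^1$; as $\qg>d$ one has $L^1(\Omega;\R^{d\times d})\Subset W^{1,\qg}(\Omega;\R^{d\times d})^*$, so $-\Delta_{\qg}P_n$ converges strongly in $W^{1,\qg}(\Omega;\R^{d\times d})^*$, and pairing with $P_n-P$ together with monotonicity of the $\qg$-Laplacian yields $P_n\to P$ strongly in $W^{1,\qg}$, whence $\calE_1(P_n)\to\calE_1(P)$ and $\EE=\en(t,P)$. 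Note that this strong convergence is also what identifies the limit of the nonlinear term $-\Delta_{\qg}P_n$ inside $\Xi_n$, so your conclusion $\Xi\in\diff tP$ depends on it as well. A secondary, smaller gap: the variational sum rule \eqref{eq:42-bis} is not simply ``the Euler--Lagrange equation of the incremental problem'', because $\en(t,\cdot)$ is a nonsmooth marginal function; the paper derives it from the inclusion $\frsub\en(t,P)\subset\diff tP$ of the Fr\'echet into the marginal subdifferential (Lemma~\ref{l:prelim4}), whose proof requires the quantitative estimate \eqref{key-Alex-2} built from \eqref{w3}(ii)--(iii).
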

We conclude by discussing 
 the 
assumptions \eqref{kappa2}--\eqref{kappa1},
\eqref{w1}--\eqref{w3}, and \eqref{R1}--\eqref{R3} for  two concrete
examples.

\begin{example}[Elastoplastic energy]
\slshape\label{ex-energy} 
We consider the  hardening  function $K: \R^{d\times d} \to [0,\infty]$
\begin{equation}
\label{ex-K} K(P) =\left\{ \begin{array}{ll} c_1 |P|^{\qp} + c_2
\left| \frac1{\det(P)} \right|^{\qga}  & \text{for } P \in \GLD,
\\
\infty & \text{otherwise}
\end{array}
\right.
\end{equation}
with $c_1,\, c_2>0$  and  $\qp,\,  \qga>d$.  
An admissible choice for  the elastic stored energy density  $W$ (focusing for simplicity on the
case of spatially-homogeneous materials) is
\begin{equation}
\label{ex-W} W(F) = \left\{ \begin{array}{ll} c_3 |F|^{\qf} + c_4
\left| \frac1{\det(F)} \right|^{\eta}  & \text{for } F \in \GLD,
\\
\infty & \text{otherwise}
\end{array}
\right. \quad \text{with }  c_3,\, c_4>0 \text{ and }  \qf>d, \, 
\eta>0.
\end{equation}
In addition, we suppose that the exponents $\qf,\, \qga, \, \qp$
fulfill \eqref{index-1}--\eqref{index-2}.  For instance, we may choose $\qphi=d+1$, $\qf=\qp=\qg=2d+2$, and $\qga = 4d+4$. 

Following the discussion in \cite[Example 3.2]{MaiMie09GERI}, we
observe that $W$ complies with \eqref{w1}--\eqref{w2}. In order to
check the Mandel stress control  \eqref{w3}, taking
into account \eqref{cof-der} we calculate $\rmD_F W(F) = c_3 \qf
|F|^{\qf-2} F - c_2\eta (\det(F))^{-\eta-1} \cof(F)$. Therefore
\begin{equation}
\label{mandel-explicit} \Man{F} = \transp{F} \rmD_F W (F) = c_3 \qf
|F|^{\qf-2} \transp{F} F - c_2\eta \det(F)^{-\eta}
\underbrace{\transp{F} \det(F)^{-1} \cof(F)}_{= \mathbf{1} \text{ by
  \eqref{cofactor-repre}}}\,. 
\end{equation}
Using \eqref{mandel-explicit} and arguing  in  the same way as in
\cite[Example 3.2]{MaiMie09GERI}, we conclude \eqref{w3}.
\end{example}

\begin{example}[Viscoplastic dissipation]\label{ex:R} \slshape  
Given two measurable functions
\begin{equation}
\label{sigma-nu}
\begin{aligned}
 & \sigma_{\mathrm{yield}}: \Omega \to (0,\infty), \ \text{such that } \exists\,
  \sigma_*>0  \ \foraa   x \in \Omega\, : \ \frac1{\sigma_*} \leq \sigma(x) \leq \sigma_*,
 \\
  & \nu : \Omega \to (0,\infty), \ \text{such that } \exists\,
  \nu_*>0  \ \foraa   x \in \Omega\, : \ \nu(x) \geq \nu_*,
\end{aligned}
\end{equation}
we set
\begin{equation}
\label{def-R} \rmetric(x,V) := \sigma_{\mathrm{yield}}(x) |V| +
\frac{\nu(x)}{2} |V|^2.
\end{equation}
It can be easily computed that
\begin{equation}
\label{def-R-conj}   \rmetric^* (x,\Xi) = \frac1{2\nu(x)}
\mathrm{dist}^2 (\Xi,\mathrm{E}(x)) \quad \foraa   x \in \Omega \quad
\text{for all $\Xi \in \R^{d \times d}$,}
\end{equation}
where $\mathrm{E}(x)$ is the \emph{elastic domain}
$
\mathrm{E}(x) = \{ \Xi \in \R^{d \times d} \, : \ |\Xi| \leq
\sigma_{\mathrm{yield}}(x) \},$
and  $\mathrm{dist} (\Xi,\mathrm{E}(x)) = \inf_{\Theta \in
\mathrm{E}(x)} |\Xi-\Theta| = \min_{\Theta \in \mathrm{E}(x)}
|\Xi-\Theta| $.   In this case it is immediate to check that
\eqref{R1}--\eqref{R3}, with $p=2$, are verified.

In engineering one often uses dissipation distances 
$R(x,V)= \sigma_{\mathrm{yield}}(x) |V| + \frac{\nu(x)}{p} |V|^p$
with $p=1+m$ and $0<m \ll 1$,
see e.g.\ \cite[Sec.\,4.2]{ZRSRZ96TDIT} where $m=0.012$ (which means
$p=1.012$ and $p'=84.33$). Note that in
that work the resolved shear stress $\tau_\alpha$ already includes
the yield stress of the slip system $\alpha$. 
\end{example}

\section{Proof of Theorem \ref{th:EDI.Viscopl}}
\label{s:5} 
We will derive
Theorem \ref{th:EDI.Viscopl} 
as  a consequence of the abstract Theorem \ref{thm:abstract-2},
applied in this  functional setting:
\begin{enumerate}
\item the state space $ X $ is $L^p (\Omega;\R^{d \times
    d})$ with $p$ from \eqref{R3};
\item the energy functional is $\en$ from  \eqref{reduced-energy};
\item the dissipation potentials $(\Psi_P)_{P \in \domain}$ are given
  by \eqref{def-Psip}.
\end{enumerate}
It will turn out to be particularly effective (cf.\ the discussion in
\cite[Sec.\,3]{MiRoSa13NADN}) to work with a notion of subdifferential
for the functional $\en(t,\cdot)$ tailored to the fact that $\en$
arises from a minimization procedure, and that the related set of
minimizers is nonempty.  Thus, we choose $\diff tP$ at a point $(t,P)
\in [0,T]\times \domain$ to be the collection of the G\^ateau
derivatives of $\{ \rmD_P \calI(t,\varphi,P) \}$ with $\varphi $ in
\begin{equation}
\label{set-of-minimizers}
\mins tP:= \mathrm{Argmin}\{ \,\calI(t,\varphi,P)\,:\ \varphi \in \calF \,\}
\end{equation}
(which is nonempty, cf.\ Lemma \ref{l:prelim1} below).  More precisely, we set 
\begin{enumerate}
  \setcounter{enumi}{3}
\item $\diffname: [0,T]\times   X 
  \rightrightarrows X^* $ 
  is the
  \emph{marginal subdifferential} (with respect to the variable $P$) of $\en$,
  defined by
  \begin{equation}
  \label{subdiff-notation} 
   \begin{aligned} \diff tP &:= \{  \rmD_P \calI(t,\varphi,P)\, : \
     \varphi \in \mins tP \}\\ 
     &\ =  -\Delta_{\qg} P +  \rmD K(P) + 
     \left\{ 
    \Backx{\cdot}{\nabla\varphi}{P}: \ \varphi\in\mins
       tP\right\}, 
   \end{aligned}
\end{equation}
(cf.\ also \eqref{more-precise-subdif} ahead), recalling that $
\Backx{\cdot}{\nabla\varphi}{P} = \Manx{\cdot}{\nabla \varphi
  \inve{P}} \transpi{P} \\ = \transp{(\nabla\varphi \inve{P})} \rmD_F
W(\cdot,\nabla\varphi \inve{P} ) \transpi{P}$.
\end{enumerate}
Finally, as suggested by \cite{MiRoSa13NADN} we will work with the
following surrogate notion of $\partial_t \en$, namely the power
functional:
\begin{enumerate}
 \setcounter{enumi}{4}
\item
   $\Ptname : \mathrm{graph}(\diffname) \to \R$  is defined  via
\begin{equation}
  \label{def-P-hat} \Pt tP{\Xi} :=
  \sup_{\phimin \in \Rt tP{\Xi}} \partial_t \calI(t,\varphi, P)
  = \sup_{\phimin \in \Rt tP{\Xi}}
  \pairing{}{W^{1,\qphi}\OOd}{-\dell(t)}{\phimin},
\end{equation}
where for all $ (t,P,\Xi) \in \mathrm{graph}(\diffname)$ we set
  \begin{align}
  \label{def-RT}
  \Rt t P \Xi &:= \{\:\phimin \in \mins tP\, : \ \Xi =  -\Delta_{\qg} P
  +  \rmD K(P) +  \gateaux{\cdot}{\phimin}{P}\:\}. 
 \end{align}
\end{enumerate} 
By construction $  \Rt t P \Xi $ is nonempty on 
$ \mathrm{graph}(\diffname)$. 

In the following series of lemmas we will show that the generalized
gradient system $(\amb,\en, \Psi, \diffname, \Ptname)$ complies with
assumptions and \eqref{basic-1}--\eqref{eq:468} and
\eqref{eq:psi-sum-1}--\eqref{eq:psi-uni}.  In all of the ensuing
results, we will specify which of conditions
\eqref{kappa2}--\eqref{kappa1}, \eqref{loading},
\eqref{w1}--\eqref{w3}, \eqref{R1}--\eqref{R3}, and
\eqref{index-1}--\eqref{index-2} actually come into play in the
corresponding proofs.  For simplicity, from now on we will omit to
explicitly indicate the $x$-dependence in $\rmetric$, $W$, and the
related quantities.  In fact, since we do not use any higher order
regularity, it is obvious that the general $x$-dependent case works as
well as the homogeneous case.  Furthermore, we will use the symbols
$c$, $C$, whose meaning may vary from line to line, for generic
positive constants depending on known quantities.

\subsection{Basic properties of $\en$}
\label{ss:4.1}
Preliminarily, we discuss the properties of the energy functional
$\calE_1$ from \eqref{ene1}.

\begin{lemma}[Coercivity properties of $\calE_1$]
\label{l:prelim0} Assume \eqref{loading},
 \eqref{w1}--\eqref{w2}, and \eqref{index-1}--\eqref{index-2}. Then,  
 \begin{equation}
 \label{prelimin-dom-en1}
 \mathrm{\dom}(\calE_1(t,\cdot))  \subset \{ P \in \calP\, : \ \inve{P} \in  W^{1,\tilde{q}}(\Omega;\R^{d\times d})\}
 \qquad \text{for every } t \in [0,T],
 \end{equation}
  and 
 \begin{equation}
 \label{added-property-for-E1}
 \begin{aligned}
 \!  \!  \!  \!  \!  \!  
 \forall\, \SE>0 \ \exists\, R>0 \ \forall\, (t,P) \in
\mathrm{dom}(\calE_1):
\   \en_1(t,P) \leq \SE \ \Rightarrow \
 \| P \|_{W^{1,\qg}}+ \|\inve{P}
 \|_{W^{1,\tilde{q}} } \leq R. 
 \end{aligned}
 \end{equation}
\end{lemma}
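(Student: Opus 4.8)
The plan is to extract bounds on $P$ and on $\det(P)^{-1}$ directly from the coercivity built into $H$ through \eqref{form-of-h}--\eqref{kappa1}, and then to transfer this information to $\inve P$ by means of the cofactor representation \eqref{cofactor-repre}, $\inve P=(\det P)^{-1}\,\transp{\cof P}$, together with the chain/Leibniz rules for Sobolev functions. The quantitative statement \eqref{added-property-for-E1} is the main one; the inclusion \eqref{prelimin-dom-en1} follows from it, e.g.\ by letting $\SE$ run over $\N$ and using $\dom(\calE_1)=\bigcup_{\SE\in\N}\{\calE_1\le\SE\}$.

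First I would fix $\SE>0$ and $P$ with $\calE_1(P)\le\SE$, so that $P\in\calP$. Since $\int_\Omega K(P)\dd x\ge-C_2|\Omega|$, \eqref{form-of-h} gives $\tfrac1{\qg}\|\nabla P\|_{L^{\qg}}^{\qg}\le\SE+C_2|\Omega|$, and then \eqref{kappa1} gives $C_1\big(\|P\|_{L^{\qp}}^{\qp}+\|\det(P)^{-1}\|_{L^{\qga}}^{\qga}\big)\le\SE+C_2|\Omega|$. Since $\qg>d$, Poincaré's inequality in the form bounding $P$ by its mean and by $\|\nabla P\|_{L^{\qg}}$ (the mean being controlled by $\|P\|_{L^{\qp}}$) yields $\|P\|_{W^{1,\qg}}\le R_1(\SE)$, whence $\|P\|_{L^\infty}\le M(\SE)$ by the Sobolev embedding $W^{1,\qg}(\Omega)\hookrightarrow C^0(\overline\Omega)$.

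Next I would treat $\inve P=(\det P)^{-1}\transp{\cof P}$. The entries of $\cof(P)$ are homogeneous polynomials of degree $d-1$ in the entries of $P$, so from $P\in W^{1,\qg}\cap L^\infty$ and the Leibniz rule one gets $\transp{\cof P}\in W^{1,\qg}\cap L^\infty$, and likewise $\det P\in W^{1,\qg}\cap L^\infty$ with $\nabla(\det P)=\cof(P){:}\nabla P$ by \eqref{cof-der}, all with norms controlled by $M(\SE)$ and $R_1(\SE)$. For the scalar factor, $(\det P)^{-1}\in L^{\qga}$ by \eqref{kappa1}, and the chain rule for $t\mapsto1/t$ --- justified by truncating $\det P$ from below by $\varepsilon$ and letting $\varepsilon\downarrow0$, the passage to the limit being legitimate precisely because $(\det P)^{-1}\in L^{\qga}$ --- gives $\nabla\big((\det P)^{-1}\big)=-(\det P)^{-2}\nabla(\det P)$; since $(\det P)^{-2}\in L^{\qga/2}$ and $\nabla(\det P)\in L^{\qg}$, Hölder's inequality with the exponent relation $\tfrac2{\qga}+\tfrac1{\qg}=\tfrac1{\tilde q}$ --- which is exactly \eqref{index-2} --- puts $(\det P)^{-1}$ in $W^{1,\tilde q}$. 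Applying the Leibniz rule to the product $(\det P)^{-1}\transp{\cof P}$ then gives $\nabla(\inve P)=\nabla\big((\det P)^{-1}\big)\transp{\cof P}+(\det P)^{-1}\nabla(\transp{\cof P})$, whose first term lies in $L^{\tilde q}$ (being the product of an $L^{\tilde q}$-function and an $L^\infty$-function) and whose second term lies in $L^{s}$, $\tfrac1s=\tfrac1{\qga}+\tfrac1{\qg}<\tfrac1{\tilde q}$, hence in $L^{\tilde q}$ since $\Omega$ is bounded. Together with $|\inve P|\le c_d M(\SE)^{d-1}\det(P)^{-1}\in L^{\qga}\hookrightarrow L^{\tilde q}$ (using $\tilde q<\qga$), this shows $\inve P\in W^{1,\tilde q}(\Omega;\R^{d\times d})$ with $\|\inve P\|_{W^{1,\tilde q}}\le R(\SE)$, and collecting the estimates gives \eqref{added-property-for-E1}.

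The main obstacle I anticipate is exactly the justification of the chain rule for $(\det P)^{-1}$ (and, relatedly, the Leibniz rule for $\inve P$): the map $t\mapsto1/t$ is smooth on $(0,\infty)$ but not globally Lipschitz, so the identity is not a consequence of the elementary Sobolev chain rule for bounded functions. It is nonetheless covered by the standard chain rule for Sobolev functions composed with $C^1$ maps, provided one knows a posteriori that the resulting expression is integrable; and this integrability is precisely what the $L^{\qga}$-bound on $\det(P)^{-1}$ coming from \eqref{kappa1} provides, via the exponent count built into the definition \eqref{index-2} of $\tilde q$. This can be carried out as in \cite{MaiMie09GERI}. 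Everything else is routine Poincaré/Sobolev estimates and Hölder bookkeeping.
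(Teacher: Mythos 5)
Your argument is correct and takes essentially the same route as the paper: coercivity of $H$ via \eqref{form-of-h}--\eqref{kappa1} gives the bounds on $\|P\|_{W^{1,\qg}}$ and $\|\det(P)^{-1}\|_{L^{\qga}}$, and the $W^{1,\tilde q}$-bound on $\inve P$ rests on exactly the same H\"older count $\tfrac2{\qga}+\tfrac1{\qg}=\tfrac1{\tilde q}$ from \eqref{index-2}. The only difference is cosmetic: the paper differentiates $\inve P$ directly through the identity $\nabla \inve P=-\inve P\,\nabla P\,\inve P$ (cf.\ \eqref{nabla-inve}) rather than through the cofactor representation and the Leibniz rule, and your truncation argument justifying the chain rule for $(\det P)^{-1}$ supplies a justification the paper leaves implicit.
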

\begin{proof}
It follows from \eqref{form-of-h} and \eqref{kappa1} that for every $P \in \calP$
\[
\en_1(t,P)  \geq \int_\Omega C_1 \left( |P|^\qp + \det(P)^{-\qga} \right)  + \tfrac1{\qg} |\nabla P|^\qg  \dd x - C\,.
\]
Therefore, the  estimate for  $ \| P \|_{W^{1,\qg}} $ follows from  Poincar\'e's inequality, cf.\ also the arguments leading to \eqref{domain}. In order to prove the bound for  $ \|\inve{P}
 \|_{W^{1,\tilde{q}} } $,
 we compute  
 \begin{equation}
\label{nabla-inve} \nabla \inve{P} = -\inve{P} \nabla P \inve{P},
\end{equation}
 use that $   \en_1(t,P) \leq \SE $ yields an estimate for $\| \inve{P} \|_{L^{\qga}}$  (cf.\ again \eqref{domain}), and combine it with the bound for $\| \nabla P \|_{L^{\qg}}$ via 
 H\"older's inequality, relying on the condition that $\tfrac{2}{\qga} + \tfrac1{\qg} = \tfrac1{\tilde q}$ from  \eqref{index-2}.
 This provides a bound for $\| \nabla P \|_{L^{\tilde q}}$. 
  Then, with  Poincar\'e's inequality we obtain the desired estimate for $  \| \inve{P} \|_{W^{1,\tilde q}} $, and  \eqref{added-property-for-E1} ensues. 
\end{proof} 
 
We now turn to the reduced energy functional $\calE_2$ from
\eqref{sum-energies}.

\begin{lemma}[Existence of minimizers for $\calI_2$]
\label{l:prelim1} Assume \eqref{loading},
 \eqref{w1}--\eqref{w2}, and \eqref{index-1}--\eqref{index-2}. Then,
 $\calI_2$ defined in \eqref{I2}  is coercive, namely
\begin{align}
  &\label{esti-1}
   \begin{aligned}
   &\! \! \! \! \! \!   \forall\, \varrho>0 \  \exists\, c_1, \, c_2>0
   \   \forall\, 
    (t,\varphi, P)\in [0,T]\times W^{1,\qphi}(\Omega;\R^{d}) \times
    L^p(\Omega;\R^{d \times d}) \text{ s.t. } \mathrm{det} P>0 \text{
      a.e.\ in } \Omega\, :  
    \\  
   &\quad \qquad  \calI_2(t,\varphi, P) \geq  c_1\|\varphi\|_{W^{1,\qphi}}^{\qphi}  
    -\varrho\, \|P \|_{L^{\qp}}^{\qp} -c_2.
 \end{aligned}
\end{align}
Moreover, for all $(t,P) \in [0,T] \times \domain $ the set $\mins tP$
of minimizers, namely
\begin{equation}
\label{neq-empty} \mins tP: = \argmin \calI_2(t,\cdot,P) = \{ 
\varphi \in \calF\, : \ \varphi \text{ minimizes } \calI_2(t,\cdot,P) \text{ on }\calF \}
\end{equation}
is nonempty, weakly closed in $ W^{1,\qphi}(\Omega;\R^{d}) $, and contained in the 
$W^{1,\qphi}$-ball of radius 
$
\hat{r}(P):=c_3 (1+ \| P \|_{L^\qp}^{\qp})^{1/\qphi}$. 
\end{lemma}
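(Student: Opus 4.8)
The plan is to establish the coercivity estimate \eqref{esti-1} by a direct computation, and then to run the direct method of the calculus of variations for the minimization of $\calI_2(t,\cdot,P)$ at a fixed pair $(t,P)$.

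\emph{Coercivity.} Since \eqref{esti-1} is trivially true unless $(\varphi,P)\in\calF\times\calP$, I would assume this and start from the lower bound $\calI_2(t,\varphi,P)\ge\int_\Omega j\dd x+C_3\|\nabla\varphi\inve P\|_{L^{\qf}}^{\qf}-\langle\ell(t),\varphi\rangle$ coming from \eqref{w1}. To convert $\|\nabla\varphi\inve P\|_{L^{\qf}}$ into control of $\|\nabla\varphi\|_{L^{\qphi}}$, I would use the pointwise bound $|\nabla\varphi|=|(\nabla\varphi\inve P)P|\le|\nabla\varphi\inve P|\,|P|$ (legitimate since $\det P>0$ a.e.) and H\"older's inequality with the pair of exponents $\qf/\qphi$ and $\qp/\qphi$, which are conjugate exactly because of \eqref{index-1}; this yields $\|\nabla\varphi\|_{L^{\qphi}}^{\qphi}\le\|\nabla\varphi\inve P\|_{L^{\qf}}^{\qphi}\,\|P\|_{L^{\qp}}^{\qphi}$, and a further application of Young's inequality (with the same exponents and a free parameter) splits the right-hand side as $\delta\,\|\nabla\varphi\inve P\|_{L^{\qf}}^{\qf}+C_\delta\,\|P\|_{L^{\qp}}^{\qp}$ for every $\delta>0$. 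Plugging this back in, absorbing the affine loading term via $|\langle\ell(t),\varphi\rangle|\le\sup_{[0,T]}\|\ell\|_{*}\,\|\varphi\|_{W^{1,\qphi}}$ and Young's inequality, and invoking Poincar\'e's inequality (available since $\varphi=\varphi_{\mathrm{Dir}}$ on $\Gamma_{\mathrm{Dir}}$ with $|\Gamma_{\mathrm{Dir}}|>0$), one arrives at \eqref{esti-1}, with $c_1,c_2$ depending on $\varrho$ through a choice of $\delta$ large enough that the resulting coefficient of $\|P\|_{L^{\qp}}^{\qp}$ is at most $\varrho$.

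\emph{Existence, weak closedness, and the radius bound.} For $(t,P)\in[0,T]\times\domain$ the infimum $\calE_2(t,P)=\inf_{\calF}\calI_2(t,\cdot,P)$ is finite by definition of $\domain$, and \eqref{esti-1} with $\varrho=1$ (here $\|P\|_{L^{\qp}}<\infty$ because $P\in\calP\subset\rmC^0(\overline\Omega)$ as $\qg>d$) shows that it is bounded below and that every minimizing sequence $(\varphi_n)$ is bounded in the reflexive space $W^{1,\qphi}(\Omega;\R^d)$; a subsequence converges weakly to some $\varphi$, which still satisfies the Dirichlet constraint by weak continuity of the trace, so $\varphi\in\calF$. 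To conclude $\varphi\in\mins tP$ I need weak lower semicontinuity of $\calI_2(t,\cdot,P)$: the loading term is weakly continuous, while for the stored part I would use the Cauchy--Binet identity $\mathbb M_s(\nabla\varphi\inve P)=\mathbb M_s(\nabla\varphi)\,\mathbb M_s(\inve P)$ for every minor order $s$, which shows that $F\mapsto W(x,F\inve{P(x)})=\mathbb W(x,\mathbb M(F)\,\mathbb M(\inve{P(x)}))$ is polyconvex in $F$ for a.a.\ $x$ — the convex function $\mathbb W(x,\cdot)$ of \eqref{w2} composed with the $x$-dependent linear map on minors induced by $\inve{P(x)}$ — and is bounded below by $j\in L^1(\Omega)$ by \eqref{w1}; since $\qphi>d$, the minors $\mathbb M(\nabla\varphi_n)$ converge weakly (Ball's weak continuity of minors), so the classical lower semicontinuity theorem for convex normal integrands applies and $\liminf_n\int_\Omega W(\nabla\varphi_n\inve P)\dd x\ge\int_\Omega W(\nabla\varphi\inve P)\dd x$. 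Hence $\mins tP\ne\emptyset$; the same lsc estimate along a weakly convergent sequence contained in $\mins tP$ gives its weak closedness, and being bounded it is weakly compact. Finally, for $\varphi\in\mins tP$ I would combine $\calI_2(t,\varphi,P)=\calE_2(t,P)$ with \eqref{esti-1} ($\varrho=1$) to get $c_1\|\varphi\|_{W^{1,\qphi}}^{\qphi}\le\calE_2(t,P)+\|P\|_{L^{\qp}}^{\qp}+c_2$, and bound $\calE_2(t,P)$ from above by inserting a fixed admissible competitor $\varphi_\star\in\calF$ with $\det\nabla\varphi_\star>0$ a.e.\ and estimating $\int_\Omega W(\nabla\varphi_\star\inve P)\dd x\le C(1+\|P\|_{L^{\qp}}^{\qp})$ through the growth of $W$ and the regularity of $P$ on $\domain$; rearranging delivers $\|\varphi\|_{W^{1,\qphi}}\le\hat r(P)$.

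\emph{Main obstacle.} I expect the crux to be the weak lower semicontinuity of $\varphi\mapsto\int_\Omega W(\nabla\varphi\inve P)\dd x$: since the argument of $W$ is not a gradient, polyconvexity cannot be used directly, and the remedy — pushing the frozen factor $\inve P$ into the integrand via the multiplicativity $\mathbb M(\nabla\varphi\inve P)=\mathbb M(\nabla\varphi)\,\mathbb M(\inve P)$, which preserves polyconvexity in $\nabla\varphi$, and only then applying the standard weak continuity of minors — is exactly where the structural exponent conditions $\qg>d$ (regularity of $\inve P$, cf.\ the discussion leading to \eqref{domain}) and $\qphi>d$ come into play.
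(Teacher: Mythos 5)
Your proposal is correct and follows essentially the same route as the paper: the coercivity \eqref{esti-1} via the pointwise bound $|\nabla\varphi|\le|\nabla\varphi\inve P|\,|P|$, H\"older with the conjugate pair $\qf/\qphi$, $\qp/\qphi$ furnished by \eqref{index-1}, Young's inequality with a free parameter, and Poincar\'e; then the direct method using the Cauchy--Binet factorization $\mathbb M_s(\nabla\varphi\inve P)=\mathbb M_s(\nabla\varphi)\mathbb M_s(\inve P)$ (with $\inve P\in\rmC^0(\overline\Omega;\R^{d\times d})$ from $\qg,\tilde q>d$), Ball's weak continuity of minors for $\qphi>d$, and lower semicontinuity for the convex normal integrand $\mathbb W$. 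The single point of divergence is the radius bound: your step $\int_\Omega W(\nabla\varphi_\star\inve P)\dd x\le C(1+\|P\|_{L^{\qp}}^{\qp})$ for a fixed competitor invokes an upper growth bound on $W$ that is not among the hypotheses \eqref{w1}--\eqref{w2} (only a lower bound is assumed), whereas the paper reads the bound off \eqref{esti-1} applied to a minimizer together with the finiteness of $\inf_\calF\calI_2(t,\cdot,P)$ for $P\in\domain$; in the form actually used later, the radius is controlled by the energy $\calG(P)$, so your extra competitor step is neither needed nor available under the stated assumptions.
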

\begin{proof} 
 In order to prove 
 coercivity, we use
   \eqref{w1} and find 
\begin{align}
\label{4.1-1}\calI_2(t,\varphi,P) &\geq  
 \int_{\Omega}
  j(x)  \dd x  +C_3 \int_{\Omega} |\nabla\varphi
    \inve{P}|^{\qf}  \dd x  -
   \pairing{}{W^{1,\qphi}\OOd}{\ell(t)}{\varphi}.
\end{align} 
Now, we observe that
\[
\begin{aligned}
\int_{\Omega} |\nabla \varphi|^{\qphi}  \dd x  = \int_\Omega
\frac{|\nabla \varphi|^{\qphi}}{|P|^{\qphi}}  |P|^{\qphi}
\dd x   & \leq \left( 
\int_{\Omega} \frac{|\nabla \varphi|^{\qf}}{|P|^{\qf}}   \dd x 
\right)^{\qphi/\qf} \left(\int_{\Omega} |P|^{\qp}  \dd x 
\right)^{\qphi/\qp} \\ & \leq \left(\int_{\Omega} |\nabla\varphi
\inve{P}|^{\qf}  \dd x  \right)^{\qphi/\qf} \left(\int_{\Omega}
|P|^{\qp}  \dd x  \right)^{\qphi/\qp},
\end{aligned}
\]
where the second inequality follows from  H\"older's inequality
(applied  to $(|\nabla \varphi|/|P|)^{\qphi}$ and
$|P|^{\qphi}$ with exponent $r = \qf/\qphi$, 
 and the  conjugate exponent $r'$ is $r'= \qf/{(\qf-\qphi)} = \qp/\qphi$ by
virtue of \eqref{index-1}), and the third estimate is due to the
first of \eqref{useful-later}. Then, 
for the  second  term on the right-hand side of 
\eqref{4.1-1}
 for every $C>0$  we have 
\begin{equation}
\label{4.1-2} \int_{\Omega} |\nabla\varphi \inve{P}|^{\qf}  \dd x  \geq
\frac{\|\nabla \varphi \|_{L^{\qphi}}^{\qf}}{\|P\|_{L^{\qp}}^{\qf}} \geq C
\frac{\qf}{\qphi}\|\nabla \varphi \|_{L^{\qphi}}^{\qphi} - \frac{\qf}{\qp}
C^{\qp/\qphi}\|P\|_{L^{\qp}}^{\qp},
\end{equation}
the latter estimate due to Young's inequality (cf.\ the second of
\eqref{useful-later}). Then for   a given  $\rho>0$, we choose the
constant $C$ in such a way that $ \frac{C_3}2 \frac{\qf}{\qp}
C^{\qp/\qphi}=\rho$ and we combine estimate \eqref{4.1-2} with
\eqref{4.1-1}.  Taking into account Korn's and Young's inequalities,
we obtain
\[
\begin{aligned}
 \calI_2(t,\varphi,P)
 &  \geq
  \int_{\Omega} j(x)  \dd x  + C_\rho \|
\varphi\|_{W^{1,\qphi}}^{\qphi} -  \rho
\|P\|_{L^{\qp}}^{\qp} - \| \ell(t)
\|_{W^{1,\qphi}(\Omega;\R^{d})^*} \|
\varphi\|_{W^{1,\qphi}} \\ & \geq 
\frac{C_\rho}2 \|
\varphi\|_{W^{1,\qphi}}^{\qphi}  - \rho
\|P\|_{L^{\qp}}^{\qp} -C' \left( 1+ \|
\ell\|_{L^\infty (0,T;W^{1,\qphi}(\Omega;\R^{d})^*)}^{\qphi'}\right).
\end{aligned}
\]
Then,
 \eqref{esti-1}  follows.

 In order to prove the existence of minimizers, see \eqref{neq-empty}, 
we preliminarily observe that
\begin{equation}
\label{enhanced-domain}
 \domain \subset \left\{ P \in \calP\, : \inve{P} \in
 W^{1,\tilde{q}}(\Omega;\R^{d\times d})\right\}.
\end{equation}
This follows from \eqref{prelimin-dom-en1} and the fact that
$\calI_2$, hence $\calE_2$, is bounded from below, cf.\
\eqref{esti-1}.  Hence, let us consider an infimizing sequence
$(\varphi_k) \subset \calF$ for the minimum problem in
\eqref{sum-energies}. Estimate \eqref{esti-1} implies that
$(\varphi_k)$ is bounded in $W^{1,\qphi}(\Omega;\R^d)$, so that there
exists $\varphi \in W^{1,\qphi}(\Omega;\R^d)$ such that, up to a
subsequence, $\varphi_k \weakto \varphi$ in
$W^{1,\qphi}(\Omega;\R^d)$. Because of \eqref{enhanced-domain}, we
then have $\nabla \varphi_k \inve{P} \weakto \nabla \varphi \inve{P}$
in $L^{\qphi}(\Omega;\R^{d\times d})$ for every $P \in \domain$, since
$\inve{P} \in \rmC^0 (\overline{\Omega};\R^{d\times d}) $ in view of
\eqref{enhanced-domain}.  Hence, the weak continuity of minors of
gradients (cf.  \cite{Resh67SCMM,Ball76CCET}) for $\qphi>d$ and the
Cauchy-Binet relations give
 \begin{equation}
 \label{Cauchy-Binet}
\mathbb M_s (\nabla\varphi_k \inve{P})=
\mathbb{M}_s(\nabla\varphi_k )\mathbb{M}_s(\inve{P})
 \weakto  \mathbb{M}_s(\nabla\varphi)\mathbb{M}_s(\inve{P}) = \mathbb M_s (\nabla\varphi \inve{P}) 
 \end{equation}
  in $L^{\qphi/s}(\Omega;\R^{\binom{d}{s}\times \binom{d}{s}})$,  for
 all $s  \in \{1,\ldots, d\}$.
Ultimately,
\[
\mathbb M (\nabla\varphi_k \inve{P}) \weakto \mathbb M(\nabla\varphi \inve{P})
 \quad \text{
 in $L^{1}(\Omega;\R^{\mu_d})$.}
\]
Then, arguing in the same way as in the proof of \cite[Thm.\
5.2]{MaiMie09GERI} to adapt classical lower semicontinuity arguments
for Carath\'eodory integrands (cf.\ e.g.\ \cite{Eise79SLSM} or
\cite{Stru90VMAN}) to the \emph{normal integrand} $\mathbb W$, we
conclude in view of \eqref{w2} that
\begin{equation}
\label{eisen}
\begin{aligned}  \liminf_{k \to \infty} \int_{\Omega}
  W(\nabla\varphi_k(x) \inve{P(x)})  \dd x & =  \liminf_{k \to \infty}
  \int_{\Omega} \mathbb W(\mathbb M(\nabla\varphi_k(x) \inve{P(x)}))\dd x 
 \\ & \geq
  \int_{\Omega} \mathbb W(\mathbb M(\nabla\varphi(x) \inve{P(x)}))  \dd x 
\end{aligned}
\end{equation}
Therefore, the direct method in the calculus of variations yields that
$\varphi \in \calF$ is a minimizer for $\calI_2 (t,\cdot,P)$,
contained in the $W^{1,\qphi}$-ball of radius $\hat{r}(P)$ in view of
\eqref{esti-1}.

These arguments also show that, up to a subsequence any sequence
of minimizers weakly converges in $ W^{1,\qphi}(\Omega;\R^{d}) $ to a
minimizer.  Hence $\mins tP$ is weakly sequentially compact in $
W^{1,\qphi}(\Omega;\R^{d}) $.
\end{proof}

From Lemma \ref{l:prelim1} we now deduce that $\en: [0,T] \times X \to
(-\infty,\infty]$ defined by \eqref{reduced-energy} is lower
semicontinuous and bounded from below as required by \eqref{basic-1}
(in fact, we will prove that $\en$ is bounded from below by a negative
constant but, as previously mentioned, we can always reduce to a
positive lower bound by adding a positive constant), and that it
complies with conditions \eqref{eq:17-bis} and \eqref{eq:diffclass_a}.

\begin{lemma}[Coercivity, lower semicontinuity, and time-dependence of $\en$]
  \label{l:prelim2} Assume 
  \\ \eqref{kappa2}--\eqref{kappa1}, \eqref{loading},
  \eqref{w1}--\eqref{w2}, and \eqref{index-1}--\eqref{index-2}. Then,
  for every $t\in [0,T]$ the energy $\en(t,\cdot)$ is lower
  semicontinuous on $X$. 
 Furthermore,
 \begin{equation}
\label{esti-2} \begin{aligned}
   &    \exists\,  c_4,\, c_5>0  \  \   \forall\, (t,P) \in
[0,T] \times \domain\ \forall\, \varphi\in \cM(t,P)\, :   \\
& \begin{aligned}  \quad \en(t,P) \geq c_4 
 \Big(   \| \nabla P \|_{L^{\qg}}^{\qg} 
  + \| P \|_{L^{\qp}}^{\qp} + \| \det(P)^{-1}
  \|_{L^{\qga}}^{\qga}  +\int_\Omega W(\nabla \varphi\,\inve{P})
   \dd x   \Big) -c_5. 
\end{aligned}
\end{aligned}
 \end{equation}
In particular, $\en$  complies with the
coercivity \eqref{eq:17-bis}. It also  satisfies
\begin{equation}
\label{conse-2}
\begin{aligned} &\forall\, \SE>0 \ \exists\, R>0 \ \forall\, (t,P) \in
[0,T] \times \domain:\\
&\qquad  \ \en(t,P) \leq \SE \ \Rightarrow \
 \| P \|_{W^{1,\qg}}+ \|\inve{P}
 \|_{W^{1,\tilde{q}} } \leq R. 
\end{aligned}
\end{equation}
Finally, $\en$ complies with  condition \eqref{eq:diffclass_a}.
\end{lemma}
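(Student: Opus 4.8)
\emph{Proof plan.} The strategy is to transfer to $\en$, via the splitting $\en(t,P)=\calE_1(P)+\calE_2(t,P)$ of \eqref{sum-energies}, the coercivity of $\calE_1$ from Lemma \ref{l:prelim0} together with the coercivity and semicontinuity of $\calI_2,\calE_2$ from Lemma \ref{l:prelim1}. First I would establish the coercivity estimate \eqref{esti-2}, which in particular shows $\en\ge -c_5$. Fix $(t,P)\in[0,T]\times\domain$ and $\varphi\in\mins tP$; rerunning the chain \eqref{4.1-1}--\eqref{4.1-2} of the proof of Lemma \ref{l:prelim1}, but retaining \emph{half} of the elastic term, one gets, for any $\varrho>0$ and a constant $C=C(\varrho,\ell)$,
\[
\calI_2(t,\varphi,P)\ \ge\ \tfrac12\!\int_\Omega W(\nabla\varphi\inve P)\dd x\ -\ \varrho\,\|P\|_{L^\qp}^\qp\ -\ C,
\]
using \eqref{index-1}, \eqref{useful-later} and Young's inequality exactly as there. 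Adding the bound $\calE_1(P)\ge C_1(\|P\|_{L^\qp}^\qp+\|\det(P)^{-1}\|_{L^\qga}^\qga)+\tfrac1\qg\|\nabla P\|_{L^\qg}^\qg-C_2$ from \eqref{form-of-h}--\eqref{kappa1} and choosing $\varrho\le C_1/2$ to absorb the negative $\|P\|_{L^\qp}^\qp$-term yields \eqref{esti-2}. Then \eqref{conse-2} follows at once: if $\en(t,P)\le\SE$, \eqref{esti-2} bounds $\|\nabla P\|_{L^\qg}$, $\|P\|_{L^\qp}$ and $\|\det(P)^{-1}\|_{L^\qga}$; Poincar\'e's inequality upgrades the first two to a $W^{1,\qg}$-bound on $P$ (so $\cof(P)$ is bounded in $\rmC^0$, as $\qg>d$), after which $\nabla\inve P=-\inve P\nabla P\inve P$, H\"older's inequality and \eqref{index-2} produce the $W^{1,\tilde q}$-bound on $\inve P$, exactly as in Lemma \ref{l:prelim0}.

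The substantial point is lower semicontinuity of $\en(t,\cdot)$ on $X=L^p(\Omega;\R^{d\times d})$. Let $P_n\to P$ in $X$ with $\liminf_n\en(t,P_n)=L<\infty$; passing to a (not relabeled) subsequence realizing the liminf, we may assume $\en(t,P_n)\le\SE$. By \eqref{conse-2}, $(P_n)_n$ is bounded in $W^{1,\qg}$ and $(\inve{P_n})_n$ in $W^{1,\tilde q}$, and since $\qg,\tilde q>d$ both spaces embed compactly into $\rmC^0(\overline\Omega;\R^{d\times d})$; hence, along a further subsequence, $P_n\to P$ and $\inve{P_n}\to\inve P$ \emph{uniformly}, so $\inf_\Omega\det P_n\ge c>0$ uniformly, $\det P\ge c>0$, and $P\in\calP$. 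Choosing $\varphi_n\in\mins t{P_n}$, the radius bound $\|\varphi_n\|_{W^{1,\qphi}}\le\hat{r}(P_n)=c_3(1+\|P_n\|_{L^\qp}^\qp)^{1/\qphi}$ of Lemma \ref{l:prelim1} and the uniform bound on $\|P_n\|_{L^\qp}$ yield, along a subsequence, $\varphi_n\weakto\varphi$ in $W^{1,\qphi}(\Omega;\R^d)$ with $\varphi\in\calF$. Now one passes to the liminf in
\[
\en(t,P_n)=\int_\Omega K(P_n)\dd x+\tfrac1\qg\!\int_\Omega|\nabla P_n|^\qg\dd x+\int_\Omega W(\nabla\varphi_n\inve{P_n})\dd x-\pairing{}{}{\ell(t)}{\varphi_n}
\]
termwise: the first term \emph{converges} to $\int_\Omega K(P)\dd x$ by uniform convergence and continuity of $K$ on a compact subset of $\GLD$ eventually containing all $P_n(\overline\Omega)$; the second is weakly l.s.c.\ on $W^{1,\qg}$ by convexity in $\nabla P$; the loading term \emph{converges} since $\ell(t)\in W^{1,\qphi}(\Omega;\R^d)^*$; and for the elastic term one argues exactly as in \eqref{Cauchy-Binet}--\eqref{eisen}, namely $\nabla\varphi_n\inve{P_n}\weakto\nabla\varphi\inve P$ in $L^\qphi$, hence $\mathbb M(\nabla\varphi_n\inve{P_n})\weakto\mathbb M(\nabla\varphi\inve P)$ in $L^1(\Omega;\R^{\mu_d})$ by the weak continuity of minors ($\qphi>d$) and the Cauchy--Binet relations, so that the normal-integrand lower semicontinuity built into \eqref{w2} gives $\liminf_n\int_\Omega W(\nabla\varphi_n\inve{P_n})\dd x\ge\int_\Omega W(\nabla\varphi\inve P)\dd x$. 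By superadditivity of $\liminf$ (the two convergent terms passing to the limit) we obtain $L\ge\calE_1(P)+\calI_2(t,\varphi,P)\ge\calE_1(P)+\calE_2(t,P)=\en(t,P)$, which is the desired inequality. With lower semicontinuity at hand, \eqref{eq:17-bis} is immediate: a sequence in a sublevel of $\en(t,\cdot)$ is bounded in $W^{1,\qg}$ by \eqref{conse-2}, hence has a subsequence converging in $\rmC^0(\overline\Omega;\R^{d\times d})$, thus in $X$, to a point that lies in the sublevel by lower semicontinuity.

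Finally, for the time-Lipschitz bound \eqref{eq:diffclass_a}: since $\calE_1$ is $t$-independent, $\en(t,P)-\en(s,P)=\calE_2(t,P)-\calE_2(s,P)$, and for $\varphi\in\mins tP$ the competitor inequality $\calE_2(s,P)\le\int_\Omega W(\nabla\varphi\inve P)\dd x-\pairing{}{}{\ell(s)}{\varphi}=\calE_2(t,P)+\pairing{}{}{\ell(t)-\ell(s)}{\varphi}$ together with its symmetric analogue gives, using \eqref{loading} and the radius bound of Lemma \ref{l:prelim1},
\[
|\en(t,P)-\en(s,P)|\ \le\ \|\dell\|_{L^\infty}\,\hat{r}(P)\,|t-s|\ \le\ C\,(1+\|P\|_{L^\qp}^\qp)\,|t-s|.
\]
Using \eqref{esti-2} in the form $\|P\|_{L^\qp}^\qp\le(\cg P+c_5)/c_4$ and (after, if necessary, adding a positive constant to $\en$ so that $\cg P\ge C_0>0$) the elementary estimate $1+\|P\|_{L^\qp}^\qp\le K_1'\,\cg P$, we conclude $|\en(t,P)-\en(s,P)|\le K_1\,\cg P\,|t-s|$, i.e.\ \eqref{eq:diffclass_a}. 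The genuinely delicate step is the lower semicontinuity argument, for it requires the simultaneous strong $\rmC^0$-convergence of $P_n$ and of $\inve{P_n}$ --- and hence the uniform positivity of $\det P_n$ --- alongside the weak $W^{1,\qphi}$-convergence of the auxiliary minimizers $\varphi_n$, so as to be in a position to invoke the polyconvex lower semicontinuity for the composite $\nabla\varphi_n\inve{P_n}$.
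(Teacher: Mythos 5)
Your proposal is correct and follows essentially the same route as the paper: \eqref{esti-2} by splitting off half of the elastic term in the coercivity chain \eqref{4.1-1}--\eqref{4.1-2} and absorbing the $\|P\|_{L^{\qp}}^{\qp}$-penalty into \eqref{kappa1}; \eqref{conse-2} via the $W^{1,\qg}$- and $W^{1,\tilde q}$-bounds as in Lemma \ref{l:prelim0}; lower semicontinuity via uniform convergence of $P_n$ and $\inve{P_n}$, weak compactness of the minimizers $\varphi_n$, and polyconvexity/weak continuity of minors as in \eqref{Cauchy-Binet}--\eqref{eisen}; and \eqref{eq:diffclass_a} via the competitor inequality and the radius bound $\hat r(P)$. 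No gaps.
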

\begin{proof} Estimate \eqref{esti-2} follows from \eqref{form-of-h}
  and \eqref{kappa1}, combined with \eqref{esti-1} (which clearly
  yields a lower bound for $\calE_2$), in which for instance we choose
  $\rho = C_1/4$.

Estimate \eqref{conse-2} immediately follows from the analogous
property \eqref{added-property-for-E1} for $\calE_1$ and the fact that
$\calE_2$ is bounded from below thanks to \eqref{esti-1}. 

In order to check the lower semicontinuity of $\ene t{\cdot}$, let
$P_n \to P $ in $L^p(\Omega;\R^{d\times d})$ with $\liminf_{n\to
  \infty} \en(t,P_n) <\infty$.  Up to a subsequence we have $\sup_n
\en(t,P_n) \leq C$, hence from \eqref{conse-2} we deduce that
\begin{equation}
\label{weak-w1q} P_n \weakto P \ \text{ in
$W^{1,\qg}(\Omega;\R^{d\times d}).$}
\end{equation}
Since $\qg >d$ and $\tilde{q}>d$ by \eqref{index-2}, we have
\begin{equation}
\label{important-convergences}\text{ $P_n \to P$  \ \  and
 \ \ $\inve{P}_n = (\det(P_n))^{-1} \cof(P_n) \to (\det(P))^{-1}
\cof(P)=\inve{P}$}
\end{equation}
 in $\rmC^0 (\overline{\Omega};\R^{d\times d})$, so that 
\begin{equation}
\label{weak-w1-tildeq} 
 \inve{P}_n \weakto
\inve{P} \ \text{ in $W^{1,\tilde{q}} (\Omega;\R^{d\times d}) $.}
\end{equation}
Using \eqref{weak-w1q}--\eqref{weak-w1-tildeq} and relying on
\eqref{kappa2} we immediately obtain $ \liminf_{n\to \infty}
\en_1(P_n) \geq \en_1(P).  $ Furthermore, let us choose a sequence
$(\varphi_n)_n$ with $\varphi_n \in \mins t{P_n}$. From $ \| \varphi_n
\|_{W^{1,\qphi}} \leq c_3 (1+ \| P_n \|_{L^\qp}^{\qp})^{1/\qphi} $
(cf.\ Lemma \ref{l:prelim1}), we have (after choosing a not relabeled
subsequence) that $\varphi_n \weakto \widetilde\varphi$ in
$W^{1,\qphi}(\Omega;\R^d)$ for some $ \widetilde\varphi$. Now, using
that $ \inve{P}_n \to \inve{P} $ in
$\rmC^0(\overline{\Omega};\R^{d\times d})$ by \eqref{weak-w1-tildeq},
we have that $\mathbb{M}_s(\inve{P}_n) \to \mathbb{M}_s(\inve{P})$ in
$\rmC^0$ and thus conclude (cf.\ \eqref{Cauchy-Binet}) that $\mathbb
M_s (\nabla\varphi_n \inve{P}_n) \weakto \mathbb M_s
(\nabla\widetilde\varphi \inve{P}) $ in
$L^{\qphi/s}(\Omega;\R^{\binom{d}{s}\times \binom{d}{s}})$, for all $s
\in \{1,\ldots, d\}$. Thus,

\begin{equation}
\label{liminf-nk}
\begin{aligned}
\en_2(t,P) \leq \calI_2 (t,\widetilde \varphi,P) \leq \liminf_{n \to \infty} \calI_2 (t,\varphi_n,P_n) = 
 \liminf_{n \to \infty} \en_2(t,P_n)\,.
\end{aligned}
\end{equation} 
Therefore, $\en(t,\cdot) $ is lower semicontinuous. 

Finally, to prove~\eqref{eq:diffclass_a}, we observe that for all
$P\in \domain$, for every $0 \leq s \leq t \leq T$ and every
$\varphi_s \in \mins sP$ there holds
\begin{equation}
\label{e:absolute}
\begin{aligned}\ene tP - \ene sP   &= \enei2tP - \enei2sP  
  \\
  & \leq \calI_2(t,\varphi_s,P) - \calI_2(s,\varphi_s,P)
  \\ & =  -\pairing{}{W^{1,\qphi}\OOd}{\ell(t)-\ell(s)}{\varphi_s}\\
  &
  \leq \| \ell(t) -\ell(s) \|_{W^{1,\qphi}(\Omega;\R^d)^*} \|\varphi_s
  \|_{W^{1,\qphi}\OOd} \\ 
  & \leq \| \dell\|_{L^\infty (0,T;W^{1,\qphi}(\Omega;\R^d)^*)}\,
  |t{-}s|\, c_3^{-1/\qphi}(\ene sP +c_4)^{1/\qphi} \\ & \leq C |t{-}s|
  (\ene sP + 1),
\end{aligned}
\end{equation}
where we have used~\eqref{loading} and \eqref{esti-2}, as well as the
trivial inequality $(\ene sP +c_4)^{1/\qphi} \leq \ene sP +c_4 +1$.
Exchanging the roles of $s$ and $t$, from \eqref{e:absolute}  and \eqref{energy-used-later} we infer
$ |\ene tP - \ene sP |\leq C |t-s| \mathcal{G}(P) $  for every
$s,\,t \in [0,T]$, and \eqref{eq:diffclass_a} follows.  
\end{proof}

\subsection{Properties of the dissipation potentials}
\label{ss:4.2}
We now show that the dissipation potentials defined by
\eqref{def-Psip} comply with conditions
\eqref{eq:psi-sum-1}--\eqref{eq:psi-uni}.

\begin{lemma}[Properties  of $(\Psi_P)_{P\in \domain}$]
\label{l:prelim3} Assume 
\eqref{kappa2}--\eqref{kappa1},  \eqref{loading},
 \eqref{w1}--\eqref{w2}, \eqref{R1}--\eqref{R3}, and
 \eqref{index-1}--\eqref{index-2}. Then,
\begin{equation}
\label{repre-psi-star} \Psi_P^*(\Xi) = \int_{\Omega}
\rmetric^*(\Xi(x) \transp{P(x)})  \dd x  \quad \text{for every $P
\in \domain$ and $\Xi \in X^*= L^{p'} (\Omega;\R^{d\times d})$.} 
\end{equation} 
Furthermore, the dissipation potentials $(\Psi_P)_{P \in \domain}$
comply with \eqref{eq:psi-sum-1}--\eqref{eq:psi-uni}, and
\begin{equation}
\label{enhanced-coercivity-psi}
\begin{aligned}
  & \forall\, S>0 \ \exists\, c_6,\, c_7>0 \ \forall\, P \in \domain
  \text{ with } \mathcal{G}(P) \leq S \
  \forall\, V\in  X \   \forall\, \Xi\in X^*  \, :\\
  &\qquad \Psi_P (V) \geq c_6 \| V \|_{L^p}^p -c_7 \ \text{ and } \
  \Psi_P^* (\Xi) \geq c_6 \| \Xi \|_{L^{p'}}^{p'} -c_7.
\end{aligned}
\end{equation}
\end{lemma}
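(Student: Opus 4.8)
The overall plan is to reduce every assertion to the pointwise properties of $\rmetric$ in \eqref{R1}--\eqref{R3} by means of the pointwise linear change of variables $V(x)\mapsto W(x):=V(x)\inve{P(x)}$; for $P\in\domain$ one has $P,\inve{P}\in\rmC^0(\overline{\Omega};\R^{d\times d})$ by \eqref{enhanced-domain}, so $V\mapsto V\inve{P}$ is a linear isomorphism of $X=L^p(\Omega;\R^{d\times d})$. To obtain \eqref{repre-psi-star} I would start from $\Psi_P(V)=\int_\Omega\rmetric(x,V(x)\inve{P(x)})\dd x$ (cf.\ \eqref{rdiss-def}--\eqref{def-Psip}), substitute $W=V\inve{P}$ in $\Psi_P^*(\Xi)=\sup_{V\in X}\big(\int_\Omega\Xi{:}V\dd x-\Psi_P(V)\big)$, and use the identity $A{:}(BC)=(A\transp{C}){:}B$ from \eqref{useful-inner} (with $A=\Xi$, $B=W$, $C=P$) to rewrite $\int_\Omega\Xi{:}(WP)\dd x=\int_\Omega(\Xi(x)\transp{P(x)}){:}W(x)\dd x$. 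This gives
\[
\Psi_P^*(\Xi)=\sup_{W\in X}\int_\Omega\Big((\Xi(x)\transp{P(x)}){:}W(x)-\rmetric(x,W(x))\Big)\dd x ,
\]
and since $\rmetric$ is a normal integrand (\eqref{R1}), $x\mapsto\Xi(x)\transp{P(x)}$ belongs to $L^{p'}$, and $L^p$ is decomposable, the classical theorem on interchanging the extremum with the integral for normal integrands yields $\Psi_P^*(\Xi)=\int_\Omega\rmetric^*(x,\Xi(x)\transp{P(x)})\dd x$; finiteness of the right-hand side follows from the upper bound in \eqref{growth-Rstar}.

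Condition \eqref{eq:psi-sum-1} would then follow easily: $V\mapsto\rmetric(x,V\inve{P(x)})$ is convex (a linear map composed with the convex $\rmetric(x,\cdot)$, cf.\ \eqref{R2}), hence $\Psi_P$ is convex; $\Psi_P$ is finite on all of $X$ and non-negative by $\rmetric\geq0$ and the upper bound in \eqref{R3} (using $\inve{P}\in\rmC^0$); $\Psi_P(0)=\int_\Omega\rmetric(x,0)\dd x=0$ by \eqref{R2}; and strong lower semicontinuity on $X$ follows from Fatou's lemma along an a.e.\ convergent subsequence together with the lower semicontinuity of $\rmetric(x,\cdot)$ from \eqref{R1}, which convexity upgrades to weak lower semicontinuity. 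For \eqref{enhanced-coercivity-psi}, if $\cg{P}\leq S$ then $\ene{t}{P}\leq\exp(K_1T)S$ for all $t$ by \eqref{energy-used-later}, so \eqref{conse-2} gives $\|P\|_{W^{1,\qg}}+\|\inve{P}\|_{W^{1,\tilde{q}}}\leq R$, and since $\qg,\tilde{q}>d$ by \eqref{index-2} the Sobolev embedding yields $\|P\|_{\rmC^0}+\|\inve{P}\|_{\rmC^0}\leq CR$. From $|V|\leq|V\inve{P}|\,|P|$ one gets $|V\inve{P}|\geq|V|/(CR)$, whence by the lower bound in \eqref{R3}, $\Psi_P(V)\geq C_R^1(CR)^{-p}\|V\|_{L^p}^p-C_R^2|\Omega|$; symmetrically, $|\Xi|\leq|\Xi\transp{P}|\,|\inve{P}|$, and \eqref{repre-psi-star} with the lower bound in \eqref{growth-Rstar} gives $\Psi_P^*(\Xi)\geq\widetilde{C}_R^1(CR)^{-p'}\|\Xi\|_{L^{p'}}^{p'}-\widetilde{C}_R^2|\Omega|$; this is \eqref{enhanced-coercivity-psi}, and dividing by $\|V\|_{L^p}$, resp.\ $\|\Xi\|_{L^{p'}}$, and using $p,p'>1$ immediately yields \eqref{basic-psi-2}.

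For the Mosco continuity \eqref{basic-psi-3}, I would take $P_n\to P$ in $X$ with $\cg{P_n}\leq S$, $V_n\weakto V$ in $X$, $\Xi_n\weakto\Xi$ in $X^*$. By \eqref{conse-2} the sequence $(P_n)$ is bounded in $W^{1,\qg}$, so up to a subsequence $P_n\to P$ and, by \eqref{important-convergences}, $\inve{P_n}\to\inve{P}$ in $\rmC^0(\overline{\Omega};\R^{d\times d})$. Writing $V_n\inve{P_n}=V_n\inve{P}+V_n(\inve{P_n}-\inve{P})$, the first term tends weakly to $V\inve{P}$ in $L^p$ (test against $\Phi\in L^{p'}$ and use \eqref{useful-inner} to move the fixed $\rmC^0$ field onto $\Phi$), and the second tends to $0$ strongly in $L^p$ because $\|V_n\|_{L^p}$ is bounded and $\|\inve{P_n}-\inve{P}\|_{\rmC^0}\to0$; hence $V_n\inve{P_n}\weakto V\inve{P}$ in $L^p$, and weak lower semicontinuity of the convex integral functional $W\mapsto\int_\Omega\rmetric(x,W)\dd x$ gives $\liminf_n\Psi_{P_n}(V_n)\geq\Psi_P(V)$. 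In the same way $\Xi_n\transp{P_n}\weakto\Xi\transp{P}$ in $L^{p'}$, and weak lower semicontinuity of $\Theta\mapsto\int_\Omega\rmetric^*(x,\Theta)\dd x$ (convex, l.s.c.\ by \eqref{R1}, bounded below by \eqref{growth-Rstar}) together with \eqref{repre-psi-star} gives $\liminf_n\Psi_{P_n}^*(\Xi_n)\geq\Psi_P^*(\Xi)$; subsequence-independence of the limits promotes these to the full sequences. Finally, for \eqref{eq:psi-uni} I would write $\Psi_P=J\circ L_P$ with $L_PV=V\inve{P}$ and $J(W)=\int_\Omega\rmetric(x,W)\dd x$: the subdifferential chain rule (applicable since $L_P$ is an isomorphism and $J$ is finite and continuous on $X$) together with the standard characterization of the subdifferential of a convex integral functional shows that $\Xi\in\partial\Psi_P(V)$ if and only if $\Xi(x)\transp{P(x)}\in\partial\rmetric(x,V(x)\inve{P(x)})$ for a.e.\ $x$ (using $L_P^*\Theta=\Theta\transpi{P}$, again from \eqref{useful-inner}); hence for $\Xi_1,\Xi_2\in\partial\Psi_P(V)$ the pointwise univaluedness in \eqref{R2} gives $\rmetric^*(x,\Xi_1(x)\transp{P(x)})=\rmetric^*(x,\Xi_2(x)\transp{P(x)})$ a.e., and integrating with \eqref{repre-psi-star} yields $\Psi_P^*(\Xi_1)=\Psi_P^*(\Xi_2)$.

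The only genuinely delicate point is the representation formula \eqref{repre-psi-star}: it rests on the interchange of the supremum over $V\in X$ with the integral, hence on the normal-integrand structure \eqref{R1} of $\rmetric$ and the decomposability of $L^p$. Once \eqref{repre-psi-star} is available, the coercivity of $\Psi_P^*$ and the univaluedness \eqref{eq:psi-uni} are immediate, and the Mosco inequalities reduce to the standard weak lower semicontinuity of convex integral functionals after the elementary remark that $V_n\inve{P_n}\weakto V\inve{P}$ and $\Xi_n\transp{P_n}\weakto\Xi\transp{P}$ — a remark that crucially exploits the uniform $\rmC^0$-control of $P_n$ and $\inve{P_n}$ furnished by \eqref{conse-2} and \eqref{important-convergences}.
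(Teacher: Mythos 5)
Your proposal is correct and follows essentially the same route as the paper's proof: the pointwise change of variables $W=V\inve{P}$ together with \eqref{useful-inner} to get \eqref{repre-psi-star}, the bounds $|V|\leq|V\inve{P}|\,|P|$ and $|\Xi|\leq|\Xi\transp{P}|\,|\inve{P}|$ combined with \eqref{conse-2} and the $\rmC^0$-embeddings for the coercivity, and the uniform convergence $\inve{P_n}\to\inve{P}$ plus weak lower semicontinuity of convex normal-integrand functionals (Ioffe's theorem in the paper) for the Mosco property. You merely spell out two steps the paper leaves implicit — the Rockafellar-type interchange of supremum and integral behind \eqref{repre-psi-star}, and the pointwise characterization of $\partial\Psi_P(V)$ underlying \eqref{eq:psi-uni} — which is fine.
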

\begin{proof} Clearly, for every $P \in \domain$ the functional
  $\Psi_P$ is well-defined on $L^p (\Omega;\R^{d\times d})$; from
  \eqref{R2} it follows that $\Psi_P$ is convex and lower
  semicontinuous, and that $\Psi_P (0) =0$. Furthermore, using
  \eqref{R3}, the first estimate in \eqref{useful-later}, and the fact
  that $\domain \subset \mathrm{C}^0 (\overline{\Omega};\R^{d \times
    d})$, we find for every $ (P,V) \in \domain \times L^p
  (\Omega;\R^{d\times d})$ the estimate
\begin{equation}
\label{esti-diss-1}
\begin{aligned}
 \Psi_P (V) \geq
C_R^1 \int_{\Omega} |V(x)\inve{P(x)}|^p  \dd x  -C_R^2 |\Omega| \geq
\frac{C_R^1}{\|P\|_{L^\infty}^p}\int_{\Omega}
|V(x)|^p  \dd x  - C_R^2 |\Omega|,
 \end{aligned}
\end{equation}
which in particular yields that, for $P \in \domain$ fixed with
$\mathcal{G}(P) \leq S$, the functional $\Psi_P$ has uniform
superlinear growth on sublevels of $\mathcal{G}$.

The representation formula \eqref{repre-psi-star} follows from
\[
\begin{aligned}
\Psi_P^*(\Xi) = \int_{\Omega}\rdiss^* (P(x),\Xi(x))  \dd x 
\end{aligned}
\]
(where $\rdiss^*$ is the conjugate of $\rdiss$ with respect to the third variable), and the
 calculation
\begin{equation}
\label{calcul-rdistar}
\begin{aligned}
\rdiss^* (P,\Xi) = \sup_{V\in \R^{d\times d}} (\Xi : V -
\rmetric(V\inve{P}))   & = \sup_{W\in \R^{d\times d}} (\Xi
: WP- \rmetric(W))\\ &  = \sup_{W\in \R^{d\times d}} (\Xi
\transp{P} : W - \rmetric(W)) = \rmetric^*(\Xi
\transp{P}),
\end{aligned}
\end{equation}
where the second identity follows from the change of variable $W = V
\inve{P}$ and the third one from \eqref{useful-inner}.
With \eqref{repre-psi-star} and the second of \eqref{R2} we find
that the functionals $(\Psi_P^*)_{P\in \domain} $ comply with the second of
\eqref{eq:psi-uni}.  Combining \eqref{repre-psi-star}--\eqref{calcul-rdistar} with \eqref{growth-Rstar},
we also conclude that 
\begin{equation}
\label{esti-diss-2}
\begin{aligned}
& 
 \forall\, (P,\Xi)
\in \domain \times L^{p'} (\Omega;\R^{d\times d}) \, : \\  &
\begin{aligned}
 \quad \Psi_P^*
(\Xi) = \int_{\Omega} \rmetric^*(\Xi(x)
(\transpi{P(x)})^{-1})    \dd x   &  \geq \widetilde{C}_R^1 \int_\Omega |\Xi(x) (\transpi{P(x)})^{-1}|^{p'} - \widetilde{C}_R^2 |\Omega|
  \\ & 
  \geq \frac{\widetilde{C}_R^1}{\|\inve{P}\|_{L^\infty}^{p'}}\int_{\Omega} |\Xi(x)|^{p'}  \dd x  - \widetilde{C}_R^2 |\Omega|. \end{aligned}
\end{aligned}
\end{equation}
Then, \eqref{basic-psi-2} follows from combining \eqref{esti-diss-1}
and \eqref{esti-diss-2} with estimate \eqref{conse-2} and  the
compact embeddings $W^{1,\qg}(\Omega;\R^{d\times d}) \subset
\mathrm{C}^0(\overline{\Omega};\R^{d\times d})$ and
$W^{1,\tilde{q}}(\Omega;\R^{d\times d}) \subset
\mathrm{C}^0(\overline{\Omega};\R^{d\times d})$.

It remains to prove \eqref{basic-psi-3}.  Let $(P_n)\subset \domain$,
$(V_n) \subset L^p (\Omega;\R^{d\times d})$ be sequences as in
\eqref{basic-psi-3}. In view of \eqref{conse-2}, we find that
$\inve{P}_n \to \inve{P}$ in $\mathrm{C}^0 (\overline{\Omega};
\R^{d\times d})$, therefore $V_n \inve{P}_n \weakto V \inve{P}$ in
$L^{p} (\Omega;\R^{d\times d})$, and the Ioffe theorem
\cite{Ioff77LSIF} ensures that
\[
\liminf_{n \to \infty}  \Psi_{P_n}  (V_n)  = \liminf_{n \to \infty}
\int_{\Omega} \rmetric(V_n(x) \inve{P_n(x)})  \dd x  \geq
\liminf_{n \to \infty} \int_{\Omega} \rmetric(V(x) \inve{P(x)})
 \dd x  = \Psi_P(V).
\]
Analogously, for  $(\Xi_n)_{n}$ with $\Xi_n \weakto \Xi$ in
$X^*$ we obtain $\liminf_{n \to \infty}   \Psi_{P_n}^* (\Xi_n) \geq
\Psi_P^*(\Xi)$.  
\end{proof}

\subsection{Properties of $\diffname$ and $\Ptname$}
\label{ss:4.3}
We now turn to the analysis of the \emph{marginal subdifferential}
$\diffname$ from \eqref{subdiff-notation}. Recall that its
definition involves the G\^ateaux derivatives $\rmD_P \calI
(t,\varphi,P)$ of $\calI$, evaluated at minimizers $\varphi \in \mins
tP$. Observe that, a priori, $\rmD_P \calI (t,\varphi,P)= -\Delta_\qg
P + \rmD K(P) + \Back{\nabla\varphi}{P}$ is an element in
$W^{1,\qg}(\Omega)^*$. In fact, the precise definition of $\diffname $
turns out to be
\begin{equation}
\label{more-precise-subdif}
\diff tP = \{ \Xi \in X^* = L^{p'}(\Omega;\R^{d\times d})\, : \
\exists \,\varphi \in \mins tP \text{ s.t. } \Xi = \rmD_P \cal
I(t,\varphi,P) \}\,. 
\end{equation}
With Lemma \ref{l:prelim4} below, we show (cf.\ \eqref{inclu-diff})
that for every $(t,P) \in [0,T]\times \domain$ the marginal
subdifferential $\diff tP$ contains the \emph{Fr\'echet
  subdifferential} $\partial \en(t,P)$ of $\en$ with respect to the
$L^{p}(\Omega;\R^{d\times d})$-topology.  We recall that the latter is
the multivalued operator $\partial \en :[0,T] \times \domain
\rightrightarrows L^{p'}(\Omega;\R^{d\times d})$ defined at $(t,P) \in
[0,T]\times \domain$ by
\begin{align}
  \label{def-Frechet}
  &\Xi \in \partial \en(t,P) \ \text{ if and only if } \\
  &\nonumber\quad \ene t{Q} - \ene t{P} \geq
   \int_\Omega \Xi : (Q{-}P) \dd x + o(\|
   Q{-}P\|_{L^{p}\OOdd}) \text{ as } Q \to P \text{ in }
   L^{p}(\Omega;\R^{d\times d}).
\end{align}
Property \eqref{inclu-diff} ahead will be used in Corollary
\ref{var-sum-rule} to verify the \emph{variational sum rule} required
within the abstract existence theory for gradient systems, cf.\
\eqref{eq:42-bis}.

In the proof of Lemma \ref{l:prelim4} and of subsequent results, a key
role is played by estimates \eqref{key-Alex-1}--\eqref{key-Alex-2},
derived next. Note that in the following estimates the variable
$P$ is restricted to sets such that $|P|+|\inve{P}|\leq C_P$ , whereas
$F=\nabla \varphi$ varies in all of $\GLD$. 

\begin{lemma}
\label{l:estimates-pointwise}
Assume \eqref{kappa2}--\eqref{kappa1}, \eqref{loading},
\eqref{w1}--\eqref{w3}, and \eqref{index-1}--\eqref{index-2}.  Then,
there holds
\begin{align}
&
\label{key-Alex-1}
\begin{aligned}
&
 \forall\, C_P>0 \ \exists\, C_B>0 \ \exists\, \bar{r}>0  \\ &     \forall\, 
  P \in \GLD \text{ with } |P| +|\inve{P}| \leq C_P \ \forall\, F \in
  \GLD \ \forall\, N \in \mathcal{N}_{\bar{r}} \, : \qquad
\\
& 
 \qquad \left| \Back{F}{NP} {-} \Back FP \right| \leq C_B 
  |N{-}\mathbf{1}| \big(W(F\inve{P}){+}1\big), 
\end{aligned}
\end{align}
and 
\begin{align}
&
\label{key-Alex-2}
\begin{aligned}
&
 \forall\, C_P>0 \ \exists\, C_W>0 \  \exists \, \tilde{r}>0
 \\
 & 
  \forall\, 
P_1,\, P_2 \in \GLD \text{ s.t. } |P_1| +|\inve{P}_1| \leq C_P,   \
|P_1{-}P_2| \leq \tilde{r}  \text{ and }  \forall\, F \in \GLD \, :  
\\
& 
\begin{aligned}
&
 \left| W(F\inve{P}_1) {-} W(F\inve{P}_2) {-} \Back{F}{P_1} {:}
   (P_1{-}P_2) \right|  \leq C_W  (W(F\inve{P}_1){+}1)
 |P_1{-}P_2|^2\,, 
 \end{aligned} 
\end{aligned}
\end{align}
\end{lemma}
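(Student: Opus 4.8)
The plan is to establish \eqref{key-Alex-1} first by a direct algebraic decomposition combined with the multiplicative stress control \eqref{w3}, and then to obtain \eqref{key-Alex-2} from \eqref{key-Alex-1} by integrating along the segment joining $P_1$ and $P_2$. The crucial structural feature is that $F=\nabla\varphi$ ranges over all of $\GLD$ with no a priori bound, so every estimate must be uniform in $F$; this is possible only because \eqref{w3}(ii)--(iii) bound the Mandel tensor $\Man{F\inve P}=\transp{(F\inve P)}\rmD_FW(F\inve P)$ and its variation by the energy itself, and the weight $W(F\inve P){+}1$ on the right-hand sides will absorb all the $F$-dependence.

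For \eqref{key-Alex-1}, I would use $(NP)^{-1}=\inve P\inve N$ and the notation $\Back FP=\Man{F\inve P}\transpi P$ to write
\begin{align*}
\Back F{NP}-\Back FP
 &=\big(\Man{F\inve P\inve N}-\Man{F\inve P}\big)\,\transpi P N^{-\mathsf T}
  +\Man{F\inve P}\,\transpi P\,\big(N^{-\mathsf T}-\mathbf 1\big).
\end{align*}
Choosing $\bar r\in(0,1)$ small enough, depending only on $\delta$ from \eqref{w3}, ensures that $N\in\mathcal N_{\bar r}$ forces $\inve N\in\mathcal N_\delta$ and, via $\inve N-\mathbf 1=-\inve N(N-\mathbf 1)$, that $|\inve N-\mathbf 1|\le 2|N-\mathbf 1|$ and $|N^{-\mathsf T}|\le 2$. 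Then \eqref{w3}(iii), applied with base matrix $G:=F\inve P$ and perturbation $\inve N$, gives $|\Man G-\Man{G\inve N}|\le C_5|\inve N-\mathbf 1|(W(G){+}1)\le 2C_5|N-\mathbf 1|(W(F\inve P){+}1)$, while \eqref{w3}(ii) gives $|\Man{F\inve P}|\le C_4(W(F\inve P){+}1)$. Since $|\transpi P|=|\inve P|\le C_P$, combining these bounds yields \eqref{key-Alex-1} with $C_B:=2C_P(2C_5{+}C_4)$. Note that the stress-control inequalities are applied throughout with base point $F\inve P$, so the weight is always $W(F\inve P){+}1$, as required — choosing the perturbation to be $\inve N$ (rather than $N$) is precisely what keeps the base point fixed.

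For \eqref{key-Alex-2}, I would pick $\tilde r>0$ small (depending only on $C_P$ and $\bar r$) so that $|P_1{-}P_2|\le\tilde r$ guarantees $P_s:=P_1+s(P_2{-}P_1)\in\GLD$ for all $s\in[0,1]$ and $N_s:=P_s\inve{P_1}=\mathbf 1+s(P_2{-}P_1)\inve{P_1}\in\mathcal N_{\bar r}$, so that $W(F\inve{P_s})<\infty$ by \eqref{w0}. By \eqref{w3}(i) the map $s\mapsto W(F\inve{P_s})$ is $\rmC^1$ on $[0,1]$; the chain rule, together with $\tfrac{\rmd}{\rmd s}\inve{P_s}=-\inve{P_s}(P_2{-}P_1)\inve{P_s}$ (the analogue of \eqref{nabla-inve}), \eqref{cof-der}, and the cyclicity identity \eqref{useful-inner}, identifies its derivative with $\Back F{P_s}{:}(P_2{-}P_1)$, consistently with the identification $\rmD_P[W(F\inve{\cdot})]=\Back F{\cdot}$ used in \eqref{subdiff-notation}. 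Hence
\begin{align*}
W(F\inve{P_1})-W(F\inve{P_2})-\Back F{P_1}{:}(P_1{-}P_2)
 =-\int_0^1\big(\Back F{P_s}-\Back F{P_1}\big){:}(P_2{-}P_1)\,\rmd s.
\end{align*}
Now $\Back F{P_s}=\Back F{N_sP_1}$, so \eqref{key-Alex-1} applies (legitimately, since $|P_1|{+}|\inve{P_1}|\le C_P$ and $N_s\in\mathcal N_{\bar r}$) and gives $|\Back F{P_s}-\Back F{P_1}|\le C_B|N_s-\mathbf 1|(W(F\inve{P_1}){+}1)\le C_BC_P|P_1{-}P_2|(W(F\inve{P_1}){+}1)$, using $|N_s-\mathbf 1|\le|P_2{-}P_1|\,|\inve{P_1}|$. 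Inserting this into the integral and bounding $|P_2{-}P_1|$ once more yields \eqref{key-Alex-2} with $C_W:=C_BC_P$.

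The only genuinely delicate point, as in \cite{MaiMie09GERI}, is the bookkeeping: because $W$ admits no global bound on its derivative, the estimates are self-referential through the weight $W(F\inve P){+}1$, so every invocation of \eqref{w3}(ii)--(iii) must keep the Mandel tensor's base point at $F\inve P$ (resp. $F\inve{P_1}$) — any stray evaluation at a perturbed argument $F\inve P\inve N$ would produce the wrong weight $W(F\inve P\inve N){+}1$ and demand a further comparison that is not, in general, available. Parametrizing the perturbations as $\inve N$ in \eqref{key-Alex-1} and writing the segment in the normalized form $P_s=N_sP_1$ in \eqref{key-Alex-2} is exactly what enforces this; all remaining smallness conditions on $\bar r$ and $\tilde r$ follow routinely from the continuity of matrix inversion near $\mathbf 1$ together with the bound $|P_1|{+}|\inve{P_1}|\le C_P$.
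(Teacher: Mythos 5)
Your argument is correct and follows essentially the same route as the paper: the same add-and-subtract decomposition of $\Back{F}{NP}-\Back{F}{P}$, estimated via \eqref{w3}(ii)--(iii) with base point $F\inve{P}$ and perturbation $\inve{N}$, followed by the same integration along the segment written as $P_s=N_sP_1$ and reduced to \eqref{key-Alex-1}. The only blemish is the order of factors in your first identity --- since $(NP)^{-\mathsf{T}}=N^{-\mathsf{T}}\transpi{P}$ rather than $\transpi{P}N^{-\mathsf{T}}$, the two difference terms should read $\bigl(\Man{F\inve{P}\inve{N}}-\Man{F\inve{P}}\bigr)N^{-\mathsf{T}}\transpi{P}$ and $\Man{F\inve{P}}\bigl(N^{-\mathsf{T}}-\mathbf{1}\bigr)\transpi{P}$ --- which restores the exact identity while leaving every one of your norm bounds, and hence the constant $C_B$, unchanged.
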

\begin{proof} 
\textbf{Ad \eqref{key-Alex-1}:} let us set $P_N: = NP$, and observe
that $\inve{P}_N  = \inve{P} \inve{N}$. Furthermore,  we have 
\begin{equation}
\label{prelim-est-P-1}
| \inve{P}_N {-} \inve{P} | \leq C_P |\inve{N} {-} \mathbf{1}| \leq 2C_P |N{-} \mathbf{1}|,
\end{equation}
Let us define  $\bar{r}: = \min\{  |\inve{P}|/4C_P, \delta\}$, with $\delta>0$ from \eqref{w3}. 
 For $N \in \mathcal{N}_r$ with  $r \leq \bar{r}$, we have that $2C_P |N{-} \mathbf{1}| \leq  |\inve{P}|/2$. 
Therefore we find 
\begin{equation}
\label{prelim-est-P-2}
|\inve{P}_N| \leq  | \inve{P}_N {-} \inve{P} | + |\inve{P} |  \leq 2 |\inve{P}| \leq 2C_P.
\end{equation}
Hence
\[
\begin{aligned}
\left| \Back{F}{NP} {-} \Back FP \right| & \leq | M(F\inve{P}_n)\transpi{P}_N {-} M(F \inve{P}) \transpi{P}| 
\\
& \leq   | (M(F\inve{P}_n)\transpi{P}_N {-} M(F \inve{P})) \transpi{P}_N| +  |M(F \inve{P}) (\transpi{P_N}{-} \transpi{P})|
\\
& \stackrel{(1)}{\leq}  C_5 |\inve{N}{-} \mathbf{1}| (W(F\inve{P}){+}1)  2C_P    + C_4 (W(F\inve{P}){+}1) C_P |\inve{N} {-} \mathbf{1}| 
\\
& \stackrel{(2)}{\leq}   6C_P (C_4+C5) (W(F\inve{P}){+}1) |N{-} \mathbf{1}|,
\end{aligned}
\]
where {\footnotesize (1)} follows from the multiplicative stress
control conditions \eqref{w3}(ii) and \eqref{w3}(iii), combined with
estimates \eqref{prelim-est-P-1} and \eqref{prelim-est-P-2}, while
{\footnotesize (2)} is due to the last inequality in
\eqref{prelim-est-P-1}.  We thus conclude \eqref{key-Alex-1}.
\\
\textbf{Ad \eqref{key-Alex-2}:} By the chain rule we have that
\[
W(F\inve{P}_1) {-} W(F\inve{P}_2) =\! \int_0^1\!\!  \Back{F}{N_\sigma P_1}
 {:}   (P_1{-}P_2) \dd \sigma \text{ with }N_\sigma: = \mathbf{1} +
(1{-}\sigma)(P_2{-}P_1) \inve{P}_1. 
\]
Clearly, $N_\sigma P_1 =  (1{-}\sigma)P_2 + \sigma P_1 $ and 
\begin{equation}
\label{even-this-used}
|N_\sigma{-} \mathbf{1}| \leq (1{-}\sigma) C_P |P_1{-}P_2|\,.
\end{equation}
Let us set $\tilde{r}:= \bar{r}/2C_P$ with $\bar{r}>0$ from \eqref{key-Alex-1}, 
then for $|P_1{-}P_2 | \leq \tilde{r}$ we have that 
\begin{equation}
\label{for-later-use-N}
|N_\sigma{-} \mathbf{1}| \leq \bar{r}/2.
\end{equation}
 Therefore, 
\[
\begin{aligned}
\left| W(F\inve{P}_1) {-} W(F\inve{P}_2) {-} \Back{F}{P_1} {:}
  (P_1{-}P_2) \right| 
 & \leq  \int_0^1\! \left| \Back{F}{N_\sigma P_1} {-} \Back{F}{P_1}
 \right|\, |P_1{-}P_2 |  \dd \sigma  
\\
 \stackrel{(1)}{\leq} C_B  (W(F\inve{P}_1){+}1)|P_1{-}P_2| \int_0^1
|N_\sigma{-}\mathbf{1}|  \dd \sigma  
&
\stackrel{(2)}{\leq}  C_P C_B (W(F\inve{P}_1){+}1)|P_1{-}P_2|^2, 
\end{aligned}
\]
where {\footnotesize (1)} is due to the previously proved estimate
\eqref{key-Alex-1}, (which applies since $N_\sigma \in
\mathcal{N}_{\bar{r}/2}$ by \eqref{for-later-use-N}), whereas
{\footnotesize (2)} ensues from \eqref{even-this-used}.  We have thus
established \eqref{key-Alex-2}. \end{proof} 

We are now in the position to prove Lemma \ref{l:prelim4}, where
we also derive a bound for the $L^1(\Omega;\R^{d\times d})$-norm
of the backstress contribution $B$ to $\diff tP$ from the
multiplicative stress control condition \eqref{w3}(ii). This will turn
out to be crucial in the proof of the closedness property
\eqref{eq:468}, cf.\ Lemma \ref{l:prelim5} ahead.

\begin{lemma}[Marginal subdifferential $\diffname$ and Fr\'echet
  subdifferential $\frsub\en$]
\label{l:prelim4} Assume \eqref{loading}, \eqref{kappa2}--\eqref{kappa1}, 
 \eqref{w1}--\eqref{w3}, and \eqref{index-1}--\eqref{index-2}.
Then, the following holds:
\\[0.3em]
1. For every $(t,P) \in [0,T]\times \domain $ with $\frsub \ene tP
  \neq \emptyset$ we have:
\begin{equation}
  \label{stronger-form}
  \forall\,\Xi \in \frsub \ene tP \ \forall \, 
   \phimin \in  \mins t P : \ \Xi - \rmD K(P) -
   \Back{\nabla\varphi}{P} = -\Delta_{\qg} P  
    \text{ in } L^1(\Omega;\R^{d\times d}),
\end{equation}
which in particular implies
\begin{equation}
\label{inclu-diff} \frsub \ene tP \subset \diff tP \ \subset X^* 
   \quad \text{ for every } (t,P) \in [0,T]\times \domain\,.
\end{equation} 
\smallskip

\noindent 2. For every $(t,P) \in [0,T]\times \domain  $ and 
$\phimin \in \mins tP$ we have  $ \Back{\nabla\varphi}{P}  \in
L^1(\Omega;\R^{d\times d})$, and
\begin{equation}
\label{even-better2}
\exists\, c_8{>}0 \   \forall\,(t,P) {\in} [0,T]{\times} \domain 
\ \forall\, \phimin {\in} \mins tP{:} \  
 \|  \Back{\nabla\varphi}{P} \|_{L^1} \leq c_8
\|\inve{P} \|_{L^\infty} ( \mathcal{G}(P){+}1 ).
\end{equation}
\end{lemma}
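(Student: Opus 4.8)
The plan is to prove the two assertions in sequence, using the pointwise estimates from Lemma \ref{l:estimates-pointwise} as the main engine. First I would establish part 2, since the $L^1$ bound on the backstress $\Back{\nabla\varphi}{P}$ is the key ingredient that makes the Fr\'echet-subdifferential identification in part 1 meaningful in $L^1$. Fix $(t,P)\in[0,T]\times\domain$ and $\varphi\in\mins tP$. By definition $\Back{\nabla\varphi}{P}=\Manx{\cdot}{\nabla\varphi\inve P}\transpi P$, so pointwise $|\Back{\nabla\varphi}{P}(x)|\le |\Manx{x}{\nabla\varphi(x)\inve{P(x)}}|\,|\transpi{P(x)}|$. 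Now apply the multiplicative stress control \eqref{w3}(ii), which gives $|\Manx{x}{F_\el}|\le C_4(W(x,F_\el)+1)$ with $F_\el=\nabla\varphi(x)\inve{P(x)}$; integrating and pulling out $\|\inve P\|_{L^\infty}$ (finite because $\domain\subset\rmC^0(\overline\Omega;\R^{d\times d})$, cf.\ \eqref{enhanced-domain}) yields
\[
\|\Back{\nabla\varphi}{P}\|_{L^1}\le C_4\|\inve P\|_{L^\infty}\Big(\int_\Omega W(\nabla\varphi\inve P)\dd x+|\Omega|\Big).
\]
Finally, $\int_\Omega W(\nabla\varphi\inve P)\dd x$ is controlled by $\en(t,P)+c_5\le \mathcal G(P)+c$ via the coercivity estimate \eqref{esti-2} of Lemma \ref{l:prelim2} (using $\varphi\in\mins tP$ and \eqref{energy-used-later} to bound $\en(t,P)$ by $\mathcal G(P)$ up to constants), which gives \eqref{even-better2}.

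For part 1, fix $(t,P)$ with $\frsub\ene tP\neq\emptyset$, take $\Xi\in\frsub\ene tP$ and $\varphi\in\mins tP$, and test the Fr\'echet-subdifferential inequality \eqref{def-Frechet} with competitors $Q=P+sV$ for $s\to 0$ and $V\in W^{1,\qg}(\Omega;\R^{d\times d})$ with $V$ small enough that $P+sV$ stays in $\domain$ (here I use that $\det P$ is bounded away from $0$ on $\overline\Omega$). The idea is to compute the directional derivative of $\ene{t}{\cdot}$ at $P$ along $V$ using the minimizer $\varphi$: since $\en(t,Q)\le\calI_2(t,\varphi,Q)+\calE_1(Q)$ and $\en(t,P)=\calI_2(t,\varphi,P)+\calE_1(P)$, one gets the one-sided bound $\ene t{P+sV}-\ene tP\le \calI(t,\varphi,P+sV)-\calI(t,\varphi,P)$, and the right-hand side is differentiable in $s$ with derivative $\langle\rmD_P\calI(t,\varphi,P),V\rangle$. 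Combined with \eqref{def-Frechet} (in both directions $\pm V$) this forces $\int_\Omega\Xi:V\dd x=\langle\rmD_P\calI(t,\varphi,P),V\rangle$ for all admissible $V$; the term $\langle\rmD_P\calI(t,\varphi,P),V\rangle$ decomposes as $\langle-\Delta_{\qg}P,V\rangle+\int_\Omega(\rmD K(P)+\Back{\nabla\varphi}{P}):V\dd x$, where the first term is the duality pairing in $(W^{1,\qg})^*$ and the latter two are $L^1$ functions tested against $V$ (using \eqref{even-better2} and \eqref{kappa1}--\eqref{kappa2} for $\rmD K(P)\in L^1$). Rearranging, $-\Delta_{\qg}P=\Xi-\rmD K(P)-\Back{\nabla\varphi}{P}$ holds as an identity of $L^1$ functions — i.e.\ $\Delta_{\qg}P\in L^1$ — which is \eqref{stronger-form}; and since $\Xi\in L^{p'}\subset L^1$, this says the minimizer $\varphi$ realizes $\rmD_P\calI(t,\varphi,P)=\Xi$ in the sense of \eqref{more-precise-subdif}, giving $\Xi\in\diff tP$, hence \eqref{inclu-diff}.

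The main obstacle I anticipate is the careful justification of the directional-derivative computation in part 1: one must verify that $s\mapsto\calI(t,\varphi,P+sV)$ is genuinely differentiable at $s=0$ with the claimed derivative, which requires dominated-convergence control of the difference quotients for $W(\nabla\varphi(P+sV)^{-1})$ — here the quadratic remainder estimate \eqref{key-Alex-2} of Lemma \ref{l:estimates-pointwise} is exactly what provides the needed $o(s)$ bound uniformly in $x$ (with the $W(\nabla\varphi\inve P)+1$ majorant, integrable by part 2), and one also needs $(P+sV)^{-1}$ to converge to $\inve P$ in $\rmC^0$, which follows from $V\in W^{1,\qg}\subset\rmC^0$ and the open-ness of $\GLD$. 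Once this differentiability is in hand, the rest is bookkeeping: splitting $\rmD_P\calI$ into its three contributions and noting that the pairing against arbitrary $V\in W^{1,\qg}$ (a dense subspace) identifies the $L^1$-distributional Laplacian term.
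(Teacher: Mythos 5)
Your proposal is correct and rests on the same ingredients as the paper's proof: for part 2, the pointwise Mandel--stress bound \eqref{w3}(ii) combined with the coercivity \eqref{esti-2} and \eqref{energy-used-later}; for part 1, the sandwich between the Fr\'echet-subdifferential lower bound for $\en$ and the upper bound $\ene tQ\le\calI(t,\varphi,Q)$ coming from the fixed minimizer $\varphi$, with the quadratic remainder \eqref{key-Alex-2} and the $\rmC^2$-regularity of $K$ controlling the errors. The only difference is one of packaging: the paper proves that $\Xi-\rmD K(P)-\Back{\nabla\varphi}{P}$ belongs to the (singleton) Fr\'echet subdifferential of the convex functional $\overline{\en}_1(P)=\frac1{\qg}\int_\Omega|\nabla P|^{\qg}\dd x$ in the $W^{1,\qg}$-topology, verified along arbitrary sequences $P_n\to P$, whereas you take two-sided directional derivatives along $Q=P\pm sV$ and identify $-\Delta_{\qg}P$ with an $L^1$ function by testing against $V\in W^{1,\qg}(\Omega;\R^{d\times d})$; both routes are valid.
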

\begin{proof}
  \textbf{Ad \eqref{stronger-form}:} For the proof of
  \eqref{stronger-form}, we shall use the notation
\[
  \overline{\en}_1 (P) : = \frac1{\qg} \int_{\Omega} |\nabla P(x)|^{\qg}
  \dd x.  
\]
Since the Fr\'echet subdifferential of $\overline{\en}_1$ with respect
to the $W^{1,\qg}(\Omega;\R^{d\times d})$-topology fulfills
$ \partial_{W^{1,\qg}} \overline{\en}_1 (P) =\{ -\Delta_{\qg} P\}
\subset \big(W^{1,\qg}(\Omega;\R^{d\times d}) \big)^*$, we will
establish \eqref{stronger-form} by showing that for all $\Xi \in
\frsub \ene tP \subset X^*=L^{p'}(\Omega;\R^{d\times d})$ and $\phimin
\in \mins t P$ we have $\Xi - \rmD K(P) - \Back{\nabla\varphi}{P}
\in \partial_{W^{1,\qg}} \overline{\en}_1 (P)$. Then,
\eqref{stronger-form} holds, because all terms on the left-hand side
lie in $L^1(\Omega;\R^{d\times d})$.

Hence, by the definition of Fr\'echet subdifferentials,
we have to prove that
\begin{equation}
  \label{claim-bis}
 \begin{aligned}
   \Lambda (\bar{\en}_1) &  : = \overline{\en}_1(P_n) - \overline{\en}_1({P}) 
     -\int_{\Omega} \left(\Xi - \rmD K(P) -  \Back{\nabla\varphi}{P} 
  \right):  ( P_n {-} P )  \dd x   
 \\  &   \geq 
  o(\|P_n - P\|_{W^{1,\qg} }) \ \text{ as } P_n
  \to P \text{ in } W^{1,\qg} (\Omega;\R^{d\times d})\,. 
 \end{aligned}
\end{equation}
For this we  will in fact exploit that, since $\qg >d$  by  \eqref{index-1}, we also have $ P_n \to P
$ in $\rmC^0 (\overline{\Omega};\R^{d\times d})$.
 Now, we observe that
\begin{align*}
 \Lambda (\bar{\en}_1)  &  = \ene{t}{P_n} -
 \ene{t}{P} -  \int_{\Omega}\Xi : (P_n-P)   \dd x 
\\ & \quad + \int_{\Omega} \left( K(P) - K(P_n) + \rmD K(P) :
  (P_n {-}P) \right)  \dd x  
\\ & \quad 
 + \en_2 (t,P) - \en_2 (t,{P_n}) + \int_{\Omega}
 \Back{\nabla\varphi}{P} : (P_n {-}P)  \dd x 
  =:  \Lambda (\cE) + \Lambda (K) + \Lambda (\cE_2)\,.
\end{align*}
In what follows, we will estimate from below the three
terms $\Lambda(\cdot)$ individually. First of all, by the definition
\eqref{def-Frechet} of the Fr\'echet subdifferential, there holds
\[
  \Lambda (\cE)  \geq o(\|P_n {-}P\|_{L^{p} }) 
\quad \text{as } P_n \to P \text{ in } L^{p} (\Omega;\R^{d\times d}).
\]
As for $\Lambda (K)$, we observe that images of the sequence
$(P_n)_n$ and $P$ belong to a compact subset of $\GLD$.  Hence, we
have
\begin{equation}
\label{chain-K}
\begin{aligned}
 \Lambda (K) & = 
  \int_{\Omega}   \left(K(P) - K(P_n) + \rmD K(P) : (P_n
 {-}P)\right) \dd x   
 \\
 &  = \int_{\Omega} \int_0^1 \left( \rmD K(P)  -
 \rmD K((1{-}\sigma)P_n + \sigma P) \right) : (P_n {-} P)  
  \dd\sigma  \dd x 
  \\ &  \geq
  - C \| P_n {-}P\|_{L^{2} }^2 = o(\| P_n
  -P\|_{W^{1,\qg}}) 
 \end{aligned}
\end{equation}
as $P_n\to P$ in $W^{1,\qg} (\Omega;\R^{d\times d})$, where the second
estimate follows from the fact that $K$ is of class $\rmC^2$ on
$\GLD$.  Finally, for $\Lambda (\en_2)$ we choose $\varphi \in
\mins tP$ and use $\enei2 t{P_n} \leq \calI_2(t,\varphi,P_n)$ to
obtain
\begin{equation}
\label{e:old-3.21}
 \begin{aligned}
  \Lambda(\en_2) &= 
  \enei 2 t{P} - \enei2 t{P_n} + \int_{\Omega}
  \Back{\nabla\varphi}{P} : (P_n{-}P)
  \dd x   \\ & \geq \int_{\Omega} W\argut{\varphi}{P} -
  W\argut{\varphi}{P_n} +  \Back{\nabla\varphi}{P} : (P_n 
  {-} P) \dd x 
\\ &\stackrel{(1)}{\geq}  -  C_W  \int_\Omega (W(\nabla
  \varphi\inve{P}){+}1) |P_n{-}P|^2 \dd x  
\\
& \stackrel{(2)}{\geq} - C \left( \en(t,P){+}1\right) \| P_n {-} P\|_{L^\infty}^2 
   \ \stackrel{(3)}{\geq} \  - C \left( \calG(P) {+}1\right)  \| P_n {-}
  P\|_{W^{1,\qg}}^2\,.  
 \end{aligned}
\end{equation}
Here, {\footnotesize (1)} follows from estimate \eqref{key-Alex-2},
which applies since $P \in \domain $ and we may suppose that $\| P_n
{-} P \|_{L^\infty} \leq \tilde{r}$, with $\tilde{r} $ from
\eqref{key-Alex-2}, as $ P_n \to P $ in $\rmC^0
(\overline{\Omega};\R^{d\times d})$.  Then, estimate {\footnotesize
  (2)} ensues from \eqref{esti-2}, and {\footnotesize (3)} from
\eqref{energy-used-later}.  All in all, we infer that $\Lambda (\en_2)
\geq o(\|P_n - P \|_{W^{1,\qg}}) $ as $P_n \to P$ in
$W^{1,\qg}(\Omega;\R^{d\times d}). $ This gives \eqref{claim-bis},
hence \eqref{stronger-form}.

\textbf{Ad \eqref{even-better2}:} In view of   \eqref{w3}(ii),  there holds
\[
\begin{aligned}
\int_{\Omega}   | \Back{\nabla\varphi(x)}{P(x)} |  \dd x   & \leq C_4
\int_{\Omega} \left(W(\nabla \varphi(x) \inve{P(x)}) +1 \right)
|\transpi{P(x)}| \dd x  \\ & \leq C_4 \|\inve{P}
\|_{L^\infty} \left(c_3^{-1} \ene tP +
|\Omega|\right),
\end{aligned}
\]
where the last inequality ensues from \eqref{esti-2}.  Hence,
\eqref{even-better2} is established.  This concludes the proof.
\end{proof}

\begin{remark}\slshape \label{rmk:after-prelim4}
 If  \eqref{w3}(iii)  holds in the more general form \eqref{Ball-weak},
 cf.\ Remark \ref{rmk:ext-Ball}, the proof of  \eqref{stronger-form}  still
 goes through.  Indeed, in this case we would have
\[
\begin{aligned}
\Lambda (\en_2) & \geq -  C_W \int_\Omega (W(\nabla \varphi\inve{P}){+}1)
|P_n{-}P|^{1+ \alpha_W} \dd x  
\\
& \geq  - C \left( \calG(P) {+}1\right)  \| P_n {-}
P\|_{W^{1,\qg}}^{1+\alpha_W} = o( \| P_n {-} P\|_{W^{1,\qg}})\,.  
\end{aligned}
\]
\end{remark}

\begin{lemma}[Properties of $\Ptname$]\label{l:prelim4-5} 
Assume \eqref{kappa2}--\eqref{kappa1},
  \eqref{loading}, \eqref{w1}--\eqref{w3}, and
  \eqref{index-1}--\eqref{index-2}.  Then,
\begin{equation}
\label{def-P-hat-better} 
\Pt tP{\Xi} :=  \sup_{\phimin \in \Rt tP{\Xi}}
 \!\!\pairing{}{W^{1,\qphi}\OOd}{-\dell(t)}{\phimin}   \ = \ 
\max_{\phimin \in \Rt tP{\Xi} \rule{0pt}{0.7em}} \!\!\pairing{}{W^{1,\qphi}\OOd}{-\dell(t)}{\phimin},
\end{equation}
and  the function $\Ptname: \mathrm{graph}(\diffname) \to \R$ complies
with \eqref{e:ass-p-a}. 
\end{lemma}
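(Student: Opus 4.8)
The plan is to work throughout with the time‑dependent part $g_P(s):=\calE_2(s,P)=\inf_{\varphi\in\calF}\bigl(\int_\Omega W(\nabla\varphi\,\inve{P})\dd x-\pairing{}{W^{1,\qphi}\OOd}{\ell(s)}{\varphi}\bigr)$ of $\en$, since $\en(t,P)=\calE_1(P)+\calE_2(t,P)$ with $\calE_1$ independent of $t$, so that every difference quotient appearing in \eqref{e:ass-p-a} equals the corresponding one for $g_P$. I first prove \eqref{def-P-hat-better}. Fix $(t,P,\Xi)\in\graph(\diffname)$; then $\Rt tP{\Xi}\neq\emptyset$ and $\Rt tP{\Xi}\subseteq\mins tP$ is bounded in $W^{1,\qphi}(\Omega;\R^d)$ by $\hat{r}(P):=c_3(1+\|P\|_{L^{\qp}}^{\qp})^{1/\qphi}$ (Lemma \ref{l:prelim1}). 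A maximizing sequence $(\varphi_n)\subset\Rt tP{\Xi}$ for $\varphi\mapsto\pairing{}{W^{1,\qphi}\OOd}{-\dell(t)}{\varphi}$ has a (not relabeled) subsequence with $\varphi_n\weakto\varphi^*$ in $W^{1,\qphi}(\Omega;\R^d)$; then $\varphi^*\in\mins tP$ by the weak closedness of the minimizer set (Lemma \ref{l:prelim1}), and $\pairing{}{W^{1,\qphi}\OOd}{-\dell(t)}{\varphi^*}=\Pt tP{\Xi}$ by the weak continuity of the linear functional $\dell(t)$. It remains to check $\varphi^*\in\Rt tP{\Xi}$, i.e.\ $\Back{\nabla\varphi^*}{P}=\Xi-\rmD K(P)+\Delta_{\qg}P$. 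Since every $\varphi_n$ lies in $\Rt tP{\Xi}$, the backstresses $\Back{\nabla\varphi_n}{P}$ all coincide with the fixed element $\Xi-\rmD K(P)+\Delta_{\qg}P$, so the claim follows once we know $\Back{\nabla\varphi_n}{P}\weakto\Back{\nabla\varphi^*}{P}$. This is the crux and uses the polyconvexity of $W$, the multiplicative stress control \eqref{w3}(ii)--(iii) and the $L^1$‑bound \eqref{even-better2}: as in \cite[Lem.\,4.11]{DaFrTo05QCGN} and \cite[Thm.\,5.2]{MaiMie09GERI} (precisely the technique used again, with $P_n\to P$ varying, in Lemma \ref{l:prelim5} below), weak convergence of the minimizers $\varphi_n$ of $\calI_2(t,\cdot,P)$ forces weak $L^1$‑convergence of the associated backstresses, whence $\varphi^*\in\Rt tP{\Xi}$ and the supremum in \eqref{def-P-hat-better} is attained.

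Turning to \eqref{e:ass-p-a}: for the leftmost inequality fix any $\varphi_0\in\Rt tP{\Xi}\subseteq\mins tP$; testing the infimum defining $g_P(t{+}h)$ with $\varphi_0$ and using $g_P(t)=\calI_2(t,\varphi_0,P)-\calE_1(P)$ gives $g_P(t{+}h)-g_P(t)\le-\pairing{}{W^{1,\qphi}\OOd}{\ell(t{+}h){-}\ell(t)}{\varphi_0}$, so dividing by $h$, letting $h\downarrow0$ via \eqref{loading}, and taking the infimum over $\varphi_0\in\Rt tP{\Xi}$ we obtain $\liminf_{h\downarrow0}\tfrac1h(\en(t{+}h,P){-}\en(t,P))\le\inf_{\varphi_0\in\Rt tP{\Xi}}\bigl(-\pairing{}{W^{1,\qphi}\OOd}{\dell(t)}{\varphi_0}\bigr)\le\Pt tP{\Xi}$. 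For the remaining two inequalities set $g'_-(t):=\max_{\varphi\in\mins tP}\pairing{}{W^{1,\qphi}\OOd}{-\dell(t)}{\varphi}$, a maximum by weak compactness of $\mins tP$ and weak continuity of $\dell(t)$; I claim $\lim_{h\downarrow0}\tfrac1h(g_P(t){-}g_P(t{-}h))=g'_-(t)$. The bound ``$\ge$'' follows by testing $g_P(t{-}h)$ with a fixed $\varphi_0\in\mins tP$ and optimizing; for ``$\le$'' pick $\varphi_h\in\mins{t-h}P$ (bounded by $\hat{r}(P)$ in $W^{1,\qphi}$ uniformly in $h$, Lemma \ref{l:prelim1}), use $g_P(t)\le\calI_2(t,\varphi_h,P)-\calE_1(P)$ to get $g_P(t){-}g_P(t{-}h)\le\pairing{}{W^{1,\qphi}\OOd}{\ell(t{-}h){-}\ell(t)}{\varphi_h}$, and pass to the limit along a subsequence $h_k\downarrow0$ realizing the $\limsup$ with $\varphi_{h_k}\weakto\bar\varphi$: the Lipschitz‑in‑time estimate \eqref{eq:diffclass_a} for $s\mapsto g_P(s)$ together with the weak lower semicontinuity of $\varphi\mapsto\int_\Omega W(\nabla\varphi\,\inve{P})\dd x$ (the minors/Cauchy–Binet argument of Lemma \ref{l:prelim1}) yield $\bar\varphi\in\mins tP$, whence $\limsup_{h\downarrow0}\tfrac1h(g_P(t){-}g_P(t{-}h))\le\pairing{}{W^{1,\qphi}\OOd}{-\dell(t)}{\bar\varphi}\le g'_-(t)$. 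Thus $\lim_{h\downarrow0}\tfrac1h(\en(t,P){-}\en(t{-}h,P))=g'_-(t)\ge\sup_{\varphi\in\Rt tP{\Xi}}\pairing{}{W^{1,\qphi}\OOd}{-\dell(t)}{\varphi}=\Pt tP{\Xi}$, using $\Rt tP{\Xi}\subseteq\mins tP$. (Alternatively, the existence of the one‑sided derivatives of $g_P$ and their explicit form are an instance of the Danskin‑type calculus for marginal functions, cf.\ \cite{KnZaMi10CGPM,MiRoSa13NADN}.)

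Finally, $g'_-(t)\le\|\dell(t)\|_{W^{1,\qphi}(\Omega;\R^d)^*}\,\hat{r}(P)$; from \eqref{esti-2} we have $\|P\|_{L^{\qp}}^{\qp}\le c_4^{-1}(\cg P+c_5)$, and since $\cg P\ge C_0>0$ by \eqref{basic-1} and $\qphi>1$, this gives $\hat{r}(P)=c_3(1+\|P\|_{L^{\qp}}^{\qp})^{1/\qphi}\le C\,\cg P$. Hence $g'_-(t)\le K_2\cg P$ with $K_2:=C\,\|\dell\|_{L^\infty(0,T;W^{1,\qphi}(\Omega;\R^d)^*)}$, uniformly in $(t,P,\Xi)\in\graph(\diffname)$, which closes \eqref{e:ass-p-a}. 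The main obstacle is the backstress convergence in the proof of \eqref{def-P-hat-better}; everything else reduces to coercivity \eqref{esti-2}, weak compactness of the minimizer sets (Lemma \ref{l:prelim1}), the Lipschitz‑in‑time bound \eqref{eq:diffclass_a}, and $\ell\in\rmC^1$.
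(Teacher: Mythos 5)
Your proof is correct and follows essentially the same route as the paper: weak sequential compactness of $\Rt tP{\Xi}$ (via the backstress-convergence technique of Lemma \ref{l:prelim5}) to turn the $\sup$ into a $\max$, and competitor arguments with minimizers at times $t$ and $t\mp h$ combined with \eqref{loading} and the bound $\hat r(P)\le C\,\cg P$ to obtain the chain of inequalities in \eqref{e:ass-p-a}. You are in fact somewhat more careful than the paper on the middle inequality, since you actually establish the existence of the left derivative $\lim_{h\down 0}\frac1h(\ene tP-\ene{t-h}P)$ and identify it as $\max_{\phimin\in\mins tP}\pairing{}{W^{1,\qphi}\OOd}{-\dell(t)}{\phimin}$, whereas the paper only records the one-sided estimates. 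The only item you omit is the Borel-measurability of $\Ptname$ required in \eqref{e:ass-p-a}, which the paper dispatches in one sentence (as a supremum of Borel functions); adding that remark would complete the argument.
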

\begin{proof}
We first observe that   for every $ (t,P,\Xi) \in
\mathrm{graph}(\diffname)$ the set 
\begin{equation}
\label{weakly-compact} 
\Rt t{P}{\Xi}   \text{ is nonempty and  weakly sequentially 
compact in } W^{1,\qphi}(\Omega;\R^d).
\end{equation}
Indeed, every sequence $(\phiminn)_n \subset \Rt t{P}{\Xi}$ is bounded
in $W^{1,\qphi}(\Omega;\R^d)$ thanks to \eqref{esti-2}, hence up to a
subsequence it  weakly  converges to some $\varphi$. From the arguments in the
proof of Lemma \ref{l:prelim5} ahead (cf.\ Step 5), it will follow
that $\varphi \in \Rt t{P}{\Xi}$. Thus, it is immediate to see that
the $\sup$ in formula~\eqref{def-P-hat-better} is indeed a $\max$.

Furthermore, the function $\Ptname: \mathrm{graph}(\diffname) \to \R$
defined by~\eqref{def-P-hat} is a Borel function because it is the
$\sup$ of Borel functions.

For the lower estimate of $\Ptname$ in \eqref{e:ass-p-a}, we
consider first $(t,P) \in [0,T]\times \domain$, $h\in (0,T{-}t]$
and $\phimin(t)\in \mins tP$ to obtain 
\[
\frac{\en(t+h,P) -\en (t,P)}{h}= \frac{\en_2 (t+h,P) -\en_2
(t,P)}{h}\leq \frac{1}h \pairing{}{W^{1,\qphi}\OOd}{-\ell(t{+}h)
 +\ell(t)}{\phimin(t)},
\]
whence $\limsup_{h \down 0}\frac{\en(t+h,P) -\en(t,P)}{h}\leq \Pt
tP\Xi$.  In order to prove the upper estimate $\Pt
tP\Xi \leq \lim_{h \down 0}\tfrac{\en(t,P) -\en(t-h,P)}{h}$, it is
sufficient to observe that 
\[
\frac{\en(t{+}h,P) -\en(t,P)}{h} \geq  \frac{1}h
\pairing{}{W^{1,\qphi}\OOd}{\ell(t{+}h) -\ell(t)}{\phimin(t)},
\]
for every $\phimin(t)\in \mins tP$, and then take the limit as
$h\downarrow 0$. On the other hand, it follows
from~\eqref{loading} and~\eqref{esti-2} that
\[
\begin{aligned}
 |\Pt tP\Xi|   \leq  \|\dell(t)
\|_{(W^{1,\qphi})^*} \cdot \sup_{\phimin \in \mins tP} \|
\phimin \|_{W^{1,\qphi}}  & \leq  c_3  \|\dell(t)
\|_{(W^{1,\qphi})^*}  \left( 1{+} \|P\|_{L^\qp}^\qp\right)^{1/\qphi} 
\\
&  \leq C ( \calG(P) {+} 1)^{1/\qphi}     \leq \widetilde{C}
(\calG(P) {+} 1). 
\end{aligned}
\]
Therefore, \eqref{e:ass-p-a} is fulfilled.
\end{proof}

We will now show that  the  triple  $(\en, \diffname,\Ptname)$ complies  with  a more general form of the  closedness condition \eqref{eq:468},
where a sequence $(t_n)_n$ is also considered.   This is the most difficult step, which involves the ideas developed in \cite{DaFrTo05QCGN} and, more abstractly, in \cite{FraMie06ERCR,KnZaMi10CGPM}. 
\begin{lemma}[Closedness for $(\en,\diffname,\Ptname)$]
\label{l:prelim5} Assume 
\eqref{kappa2}--\eqref{kappa1}, \eqref{loading},
 \eqref{w1}--\eqref{w3}, and \eqref{index-1}--\eqref{index-2}.
Then,  \eqref{eq:468} holds. 
\end{lemma}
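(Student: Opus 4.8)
The plan is to pass to the limit term by term in the splitting $\Xi_n = -\Delta_{\qg}P_n + \rmD K(P_n) + \Back{\nabla\varphi_n}{P_n}$, which holds in $L^1(\Omega;\R^{d\times d})$: indeed, by the very definition \eqref{subdiff-notation}--\eqref{more-precise-subdif} of $\diffname$, since $\Xi_n\in\diff{t_n}{P_n}$ there is $\varphi_n\in\mins{t_n}{P_n}$ with $\Xi_n=\rmD_P\calI(t_n,\varphi_n,P_n)$, and as $\rmD K(P_n)$ is continuous and $\Back{\nabla\varphi_n}{P_n}\in L^1$ by \eqref{even-better2}, this forces $-\Delta_{\qg}P_n\in L^1$. \textbf{Step~1 (compactness).} From the convergence $\en(t_n,P_n)\to\EE$ and \eqref{conse-2}, the families $(P_n)$ and $(\inve{P}_n)$ are bounded in $W^{1,\qg}$ and $W^{1,\tilde{q}}$ respectively; since $\qg,\tilde{q}>d$ by \eqref{index-1}--\eqref{index-2}, along a subsequence $P_n\weakto P$ in $W^{1,\qg}$, $\inve{P}_n\weakto\inve{P}$ in $W^{1,\tilde{q}}$, and $P_n\to P$, $\inve{P}_n\to\inve{P}$ in $\rmC^0(\overline\Omega;\R^{d\times d})$. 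By \eqref{esti-2} the $\varphi_n$ are bounded in $W^{1,\qphi}$, so $\varphi_n\weakto\varphi$ for some $\varphi$; by \eqref{even-better2} the backstresses $\Back{\nabla\varphi_n}{P_n}$ are bounded in $L^1$; by \eqref{kappa2}, $\rmD K(P_n)\to\rmD K(P)$ in $\rmC^0$; and by \eqref{loading}, $\ell(t_n)\to\ell(t)$ and $\dell(t_n)\to\dell(t)$ strongly.

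\textbf{Step~2 (strong convergence and the $\qg$-Laplacian).} Pairing the $L^1$-identity $-\Delta_{\qg}P_n=\Xi_n-\rmD K(P_n)-\Back{\nabla\varphi_n}{P_n}$ with $P_n-P\in W^{1,\qg}$ and using that $\Xi_n$ is bounded in $L^{p'}$, $\rmD K(P_n)$ in $\rmC^0$, $\Back{\nabla\varphi_n}{P_n}$ in $L^1$, while $P_n-P\to 0$ in $L^p\cap\rmC^0$, gives $\int_\Omega |\nabla P_n|^{\qg-2}\nabla P_n:\nabla(P_n-P)\dd x\to 0$; together with $\nabla P_n\weakto\nabla P$ in $L^{\qg}$ and the standard uniform monotonicity of the $\qg$-Laplacian, this yields $P_n\to P$ strongly in $W^{1,\qg}$. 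Hence $\calE_1(P_n)\to\calE_1(P)$, $-\Delta_{\qg}P_n\to-\Delta_{\qg}P$ in $(W^{1,\qg})^*$, and therefore $\Back{\nabla\varphi_n}{P_n}=\Xi_n-\rmD K(P_n)+\Delta_{\qg}P_n\weakto\Lambda:=\Xi-\rmD K(P)+\Delta_{\qg}P$ in $(W^{1,\qg})^*$. \textbf{Step~3 (energy convergence, $\varphi$ minimal).} The weak continuity of the minors of $\nabla\varphi_n\inve{P}_n$ (Cauchy--Binet and $\inve{P}_n\to\inve{P}$ in $\rmC^0$, as in \eqref{Cauchy-Binet}), the lower semicontinuity of the polyconvex density via the normal integrand $\mathbb{W}$ (as in \eqref{eisen}), $\calE_1(P_n)\to\calE_1(P)$, and $\ell(t_n)\to\ell(t)$ give $\calE_1(P)+\calI_2(t,\varphi,P)\leq\lim_n\en(t_n,P_n)=\EE$, in particular $\en(t,P)\leq\EE$. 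For the reverse inequality I take $\psi\in\mins{t}{P}$ as a competitor, $\en_2(t_n,P_n)\leq\calI_2(t_n,\psi,P_n)$, and pass $\calI_2(t_n,\psi,P_n)\to\calI_2(t,\psi,P)=\en_2(t,P)$: applying \eqref{key-Alex-2} with $P_1=P$, $P_2=P_n$, $F=\nabla\psi$ and using $\|P-P_n\|_{\rmC^0}\to 0$ together with $\Back{\nabla\psi}{P}\in L^1$ (Lemma~\ref{l:prelim4}) shows $\int_\Omega W(\nabla\psi\inve{P}_n)\dd x\to\int_\Omega W(\nabla\psi\inve{P})\dd x$, while $\ell(t_n)\to\ell(t)$ handles the loading. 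Thus $\en(t_n,P_n)\to\en(t,P)$, i.e.\ $\EE=\en(t,P)$, and $\calI_2(t,\varphi,P)=\en_2(t,P)$, so $\varphi\in\mins{t}{P}$.

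\textbf{Step~4 (identification of the backstress limit — the main obstacle).} It remains to show $\Lambda=\Back{\nabla\varphi}{P}$; then $\Xi=-\Delta_{\qg}P+\rmD K(P)+\Back{\nabla\varphi}{P}=\rmD_P\calI(t,\varphi,P)$ with $\varphi\in\mins{t}{P}$, i.e.\ $\Xi\in\diff{t}{P}$. This is the delicate point, since we only have weak $W^{1,\qphi}$-convergence of $\varphi_n$ and merely an $L^1$-bound on $\Back{\nabla\varphi_n}{P_n}$ through the Mandel tensor $\Man{\nabla\varphi_n\inve{P}_n}$. I would follow the technique of \cite[Lem.\,4.11]{DaFrTo05QCGN} (see also \cite{FraMie06ERCR,MaiMie09GERI,KnZaMi10CGPM}), which relies only on the minimality of $\varphi_n$ and on the polyconvexity/lower semicontinuity of $\calI_2(t,\cdot,\cdot)$ combined with the multiplicative stress control \eqref{w3}: performing inner variations $x\mapsto (\mathrm{id}+s\zeta)(x)$ with $\zeta\in {\mathrm C}_{\mathrm c}^\infty(\Omega;\R^d)$ and exploiting the minimality of $\varphi_n$ produces a Noether-type energy--momentum identity in which \eqref{w3}(ii)--(iii) provide exactly the $L^1$-control and the (quasi)continuity needed to pass to the limit; combined with the fact (Step~3) that the weak limit $\varphi$ is itself a minimizer at $(t,P)$, one identifies the weak $L^1$-limit of $\Man{\nabla\varphi_n\inve{P}_n}$ — equivalently, using $\inve{P}_n\to\inve{P}$ in $\rmC^0$, of $\Back{\nabla\varphi_n}{P_n}$ — as $\Man{\nabla\varphi\inve{P}}$, i.e.\ $\Lambda=\Back{\nabla\varphi}{P}$.

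\textbf{Step~5 ($\mathscr{P}\leq\Pt{t}{P}{\Xi}$).} By Lemma~\ref{l:prelim4-5}, the supremum defining $\Pt{t_n}{P_n}{\Xi_n}$ is attained: $\Pt{t_n}{P_n}{\Xi_n}=\pairing{}{W^{1,\qphi}\OOd}{-\dell(t_n)}{\phimin_n}$ for some $\phimin_n\in\Rt{t_n}{P_n}{\Xi_n}$, which is bounded in $W^{1,\qphi}$; hence $\phimin_n\weakto\phimin$ along a subsequence. Since $\Back{\nabla\phimin_n}{P_n}=\Xi_n-\rmD K(P_n)+\Delta_{\qg}P_n$ as well, Steps~3--4 applied verbatim to $(\phimin_n)$ give $\phimin\in\mins{t}{P}$ and $\Back{\nabla\phimin}{P}=\Lambda$, whence $\Xi=-\Delta_{\qg}P+\rmD K(P)+\Back{\nabla\phimin}{P}$, i.e.\ $\phimin\in\Rt{t}{P}{\Xi}$ (this weak closedness of $\Rt{t}{P}{\Xi}$ is the statement invoked in the proof of Lemma~\ref{l:prelim4-5}). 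As $\dell(t_n)\to\dell(t)$ strongly and $\phimin_n\weakto\phimin$, we conclude $\mathscr{P}=\lim_n\pairing{}{W^{1,\qphi}\OOd}{-\dell(t_n)}{\phimin_n}=\pairing{}{W^{1,\qphi}\OOd}{-\dell(t)}{\phimin}\leq\Pt{t}{P}{\Xi}$, which, together with Steps~3--4, establishes \eqref{eq:468}. I expect Step~4 to be by far the hardest, as it is precisely there that the lack of compactness of $\nabla\varphi_n$ and the geometric nonlinearity $\nabla\varphi\inve{P}$ must be overcome using only the variational (minimality) information.
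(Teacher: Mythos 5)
Your overall architecture matches the paper's: compactness from the energy bound, strong $W^{1,\qg}$-convergence of $P_n$ via the $L^1$-bound on $-\Delta_{\qg}P_n$, energy convergence by combining polyconvex lower semicontinuity with a competitor argument, identification of the backstress limit, and upper semicontinuity of the power via weak compactness of $\Rtname(t,P,\Xi)$. Steps 1--3 and 5 are sound; in Step 2 you replace the paper's compact embedding $L^1(\Omega;\R^{d\times d})\Subset W^{1,\qg}(\Omega;\R^{d\times d})^*$ and the weak--strong closedness of the monotone graph by a Minty-type pairing of the $L^1$-identity with $P_n-P$, which is an equally valid route to the strong convergence $P_n\to P$ in $W^{1,\qg}$ and to $\calE_1(P_n)\to\calE_1(P)$.

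The genuine gap is in Step 4, which you rightly single out as the crux. Inner variations $\varphi_n\circ(\mathrm{id}+s\zeta)$ cannot identify the weak $L^1$-limit of $\Back{\nabla\varphi_n}{P_n}$: the resulting Noether/energy--momentum identity only tests an Eshelby-type tensor (a combination of $W\argut{\varphi_n}{P_n}\,\mathbf{1}$ and $\transp{(\nabla\varphi_n)}\,\rmD_FW\argut{\varphi_n}{P_n}\transpi{P_n}$) against \emph{gradients} $\nabla\zeta$, whereas \eqref{eq:468} requires convergence of $\int_\Omega \Back{\nabla\varphi_n}{P_n}:Q\dd x$ for \emph{arbitrary} $Q\in L^\infty(\Omega;\R^{d\times d})$; one cannot recover the full tensor from its action on gradient fields. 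The technique of \cite[Lem.\,4.11]{DaFrTo05QCGN} and \cite[Prop.\,3.3]{FraMie06ERCR} that the paper actually uses is an \emph{outer} variation of the plastic variable: fix $Q\in L^\infty$ and $h>0$, note that $B$ is (up to sign) the $P$-derivative of $P\mapsto\int_\Omega W(\nabla\varphi P^{-1})\dd x$, and use the quadratic Taylor estimate \eqref{key-Alex-2} with $P_1=P_n$, $P_2=P_n+hQ$ to replace $h\int_\Omega\Back{\nabla\varphi_n}{P_n}:Q\dd x$ by the energy difference $\int_\Omega\big(W\argut{\varphi_n}{(P_n+hQ)}-W\argut{\varphi_n}{P_n}\big)\dd x$ up to an error $h\,\omega(h)$. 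Weak lower semicontinuity of the polyconvex integrand at the perturbed argument $P+hQ$, together with the energy convergence $\int_\Omega W\argut{\varphi_n}{P_n}\dd x\to\int_\Omega W\argut{\varphi}{P}\dd x$ from your Step 3, then gives $\liminf_n\int_\Omega\Back{\nabla\varphi_n}{P_n}:Q\dd x\geq\int_\Omega\Back{\nabla\varphi}{P}:Q\dd x-2\omega(h)$, and replacing $Q$ by $-Q$ and letting $h\down0$ yields the identification. Since you already invoke \eqref{key-Alex-2} in Step 3, you have the right tool in hand; it is the direction of the variation (in $P$, not in the independent variable $x$) that needs to be corrected.
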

\begin{proof} 
Let 
 $( t_n )_n \subset [0,T]$, $(P_n)_n\subset
L^p(\Omega;\R^{d\times d})$, and $ (\Xi_n)_n \subset
L^{p'}(\Omega;\R^{d\times d}) $ with $\Xi_n \in \diff{t_n}{P_n}$ for
all $n \in \N$ fulfill as $ n \to \infty$
\begin{equation}
\label{convs-4-closedness}
\begin{aligned}
&
  t_n \to t, \ \   P_n \weakto P \
\text{in $L^p(\Omega;\R^{d\times d})$,}\ \ \Xi_n\weakto \Xi \
\text{in $L^{p'}(\Omega;\R^{d\times d})$,}
\\
&
  \ene{t_n}{P_n} \to \EE \text{ and } \Pt {t_n}{P_n}{\Xi_n} \to \mathscr{P} \text{ in $\R$}.
  \end{aligned}
\end{equation}
We will split the proof that 
\begin{equation}
\label{contin-prop}
\begin{aligned}
\Xi \in \diff{t}P,  \ \ \EE=\ene tP, \ \ \mathscr{P} \leq \Pt tP{\Xi}
\end{aligned}
\end{equation} 
in  several steps.

\paragraph{\bf Step 1:  lower semicontinuity inequality for the energies.} 
From $ \sup_n \ene{t_n}{P_n} <\infty$ and from \eqref{conse-2} we
infer convergences \eqref{weak-w1q}--\eqref{weak-w1-tildeq} for
$(P_n)_n$ and $(P_n^{-1})_n$. Thus we have $\liminf_{n \to \infty}
\en_1 (P_n ) \geq \en_1 (P)$ as well as, with the very same argument
as in the proof of Lemma \ref{l:prelim2}, $\liminf_{n \to \infty}
\en_2 (t_n,P_n ) \geq \en_2 (t,P).$

\paragraph{\bf Step 2:  convergence of the minimizers.} 
Take any $(\varphi_n)_n$ with $\varphi_n \in \mins{t_n}{P_n}$ for
every $n\in \N$. As we have seen in the proof of Lemma
\ref{l:prelim2}, up to a subsequence $\varphi_n \weakto \varphi$ for
some $\varphi \in \calF$.  From $\phiminn \in \mathrm{Argmin}_{\varphi
  \in \calF} \calI_2(t_n,\varphi_n, P_n)$ we deduce
\begin{equation}
  \label{comparison-n} \int_\Omega W\argut{\phiminn}{P_n}   \dd x  -
  \pairing{}{W^{1,\qphi}}{\ell(t_n)}{\phiminn}
  \leq \int_\Omega  W\argut{\eta}{P_n}   \dd x  -
  \pairing{}{W^{1,\qphi}}{\ell(t_n)}{\eta} 
\end{equation}
for all $\eta \in \calF$. Moreover, using
\eqref{important-convergences} we see that $\int_\Omega
W\argut{\eta}{P_n} \dd x -
\pairing{}{W^{1,\qphi}\OOd}{\ell(t_n)}{\eta} \to \int_\Omega
W\argut{\eta}{P} \dd x - \pairing{}{W^{1,\qphi}\OOd}{\ell(t)}{\eta} $
for all $\eta \in \calF$, and combining~\eqref{liminf-nk} with
\eqref{comparison-n}, we conclude that $\varphi \in \mins tP$.
Moreover, choosing $\eta = \varphi$ in \eqref{comparison-n}, we find
\[
\begin{aligned}
  & \limsup_{n\to \infty} \left(\int_\Omega W\argut{\phiminn}{P_n} \dd
    x - \pairing{}{W^{1,\qphi}\OOd}{\ell(t_n)}{\phiminn}\right)
  \\
  & \leq \limsup_{n\to \infty} \left(\int_\Omega W\argut{\phimin}{P_n}
    \dd x - \pairing{}{W^{1,\qphi}\OOd}{\ell(t_n)}{\varphi}\right) \\
  & = \int_\Omega W\argut{\phimin}{P} \dd x -
  \pairing{}{W^{1,\qphi}\OOd}{\ell(t)}{\varphi} = \enei 2tP.
\end{aligned}
\]
Combining this with \eqref{liminf-nk}, we ultimately have $ \enei2
{t_n}{P_n} \to \enei2tP $ as $ n \to \infty$, whence
\begin{equation}
\label{truelim} 
 \lim_{n \to \infty} \int_\Omega W\argut{\phiminn}{P_n} \dd x  =
\int_\Omega W\argut{\phimin}{P} \dd x.
\end{equation}

\paragraph{\bf Step 3: further compactness arguments and convergence of the energies.}
In view of \eqref{subdiff-notation}, the sequence $\Xi_n \in
\diff{t_n}{P_n}$ in \eqref{contin-prop} is given for every $n \in \N$
by
\begin{equation}
 \label{xi-n-form}
 \Xi_n = -\Delta_{\qg} P_n + \rmD K(P_n) + \Back{\nabla\varphi_n}{P_n} 
 \quad \text{ for some $\phiminn \in  \mins{t_n}{P_n}$.} 
\end{equation}
Estimate \eqref{even-better2}, $\sup_n \ene{t_n}{P_n} <\infty$,
\eqref{conse-2}, and $\tilde{q}>d$ yields $ \sup_n \|
\Back{\nabla\varphi_n}{P_n} \|_{L^1} \leq C$. Moreover, unsing $K
\in \rmC^1(\GLD;\R)$ and \eqref{important-convergences} we deduce $
\sup_n \| \rmD K(P_n) \|_{L^\infty} \leq C$. By comparison in
\eqref{xi-n-form} we infer that
\begin{equation}
\label{esti-q-lapl} \sup_{n \in \N}\| {-}\Delta_{\qg} P_n  \|_{L^1} \leq C.
\end{equation}
Since $\qg>d$,  we have $L^1 (\Omega;\R^{d\times d}) \Subset
W^{1,\qg}(\Omega;\R^{d\times d})^*$, such that, up to a (not
relabeled) subsequence, $-\Delta_{\qg} P_n \to \chi$ in
$W^{1,\qg}(\Omega;\R^{d\times d})^*$ for  some $\chi$. Combining this with
\eqref{weak-w1q} and taking into account that the monotone map $P
\mapsto -\Delta_{\qg} P$ has a  weakly-strongly closed graph in
$W^{1,\qg}(\Omega;\R^{d\times d}) \times
W^{1,\qg}(\Omega;\R^{d\times d})^*$, we
 ultimately conclude that $\chi = -\Delta_{\qg} P$, and that along
 the same sequence as in \eqref{contin-prop} there holds
\begin{equation}
\label{strong-w1qg-star}  -\Delta_{\qg} P_n \to  -\Delta_{\qg} P  \
\text{ in  $W^{1,\qg}(\Omega;\R^{d\times d})^*.$}
\end{equation}
Thus,   the assumed weak convergence $P_n \weakto P$ in $W^{1,\qg}(\Omega;\R^{d\times d})$, cf.\  \eqref{weak-w1q}, improves to 
\begin{equation}
\label{strong-w1qg} P_n \to P \ \text{ in
$W^{1,\qg}(\Omega;\R^{d\times d}).$}
\end{equation}
Now, taking into account that $K$ is of class  $\rmC^0$   on $\GLD$
 we conclude that
\begin{equation}
\label{convex-k-enne} \int_{\Omega} K(P_n)  \dd x  \to  \int_{\Omega}
K(P)  \dd x   \ \ \text{ as $n \to \infty$.}
\end{equation}
Combining this with \eqref{strong-w1qg} and recalling
\eqref{ene1}--\eqref{form-of-h}, we have $ \calE_1 (P_n)
\to \calE_1 (P)$ so that,  by \eqref{truelim} we have 
\begin{equation}
\label{conve-ene1} \ene {t_n}{P_n} \to \ene tP \ \text{ as $n \to
\infty$,}
\end{equation}
whence the second of \eqref{contin-prop}. 
\paragraph{\bf Step 4: convergence of the stresses.} With
\eqref{important-convergences} and the $\rmC^0$-regularity of $\rmD K$
we infer
\begin{equation}
\label{weak-dkappa-1}
\begin{aligned}
  & \rmD K(P_n) \to \rmD K(P) \ \text{ in $L^\infty
(\Omega;\R^{d\times d})$}.
\end{aligned}
\end{equation}
We now show that
\begin{equation}
\label{weak-dw}
\begin{aligned}
  &  \Back{\nabla\varphi_n}{P_n} \weakto  \Back{\nabla\varphi}{P}
  \ \text{ in $L^1 (\Omega;\R^{d\times d})$, viz. } \\ & \int_{\Omega}
  \Back{\nabla\varphi_n}{P_n} : Q  \dd x  \to
\int_{\Omega} \Back{\nabla\varphi}{P} : Q  \dd x 
\end{aligned}
\end{equation}
for all $Q \in L^\infty (\Omega;\R^{d\times d})$, 
where $\varphi_n \in \mins {t_n}{P_n}$ and $\varphi \in \mins tP$ are
from the previous steps.  To this aim, we mimic the 
proof of \cite[Prop.~3.3]{FraMie06ERCR}. We fix $Q \in L^\infty
(\Omega;\R^{d\times d})$ and $h>0$.  On the one hand,
 we have 
\begin{equation}
\label{diff-quot-argum}
\begin{aligned}
    &  \Big| \frac1h \int_{\Omega}\big(
W\argut{\varphi_n}{(P_n {+} h Q)} - W\argut{\varphi_n}{P_n} - h
 \Back{\nabla\varphi_n}{P_n} {:} Q \big)  \dd x \Big|
\\
 &  \stackrel{(1)}{\leq}
\frac1h C_W \int_\Omega \left(W\argut{\varphi_n}{P_n}  {+}1\right) | hQ|^2 \dd x 
\ \stackrel{(2)}{\leq} \
 h C \left( \calG(P_n){+}1 \right) \|Q\|_{L^\infty}^2 =: \omega(h),
\end{aligned}
\end{equation}
where {\footnotesize (1)} follows from applying estimate
\eqref{key-Alex-2} with the choices $P_1: = P_n$ and $P_2:= P_n {+} h
Q$. Note that $\|P_n\|_{L^\infty} + \| \inve{P}_n\|_{L^\infty} \leq C$
thanks to the energy bound $\sup_n \en(t_n,P_n) <\infty$, combined
with the coercivity property \eqref{conse-2} and the continuous
embeddings \\ $W^{1,\qga}(\Omega;\R^{d\times d}) \subset
L^\infty(\Omega;\R^{d\times d})$ and  $ W^{1,\tilde q}(\Omega;\R^{d\times
  d}) \subset L^\infty(\Omega;\R^{d\times d})$. We may then take $h$
sufficiently small, in such a way that $\|P_1{-}P_2\|_{L^\infty} =
\|P_n {-}( P_n {+} h Q)\|_{L^\infty} = h \| Q\|_{L^\infty}\leq
\tilde{r}$. Then, {\footnotesize (2) } ensues from very same
calculations as in \eqref{e:old-3.21}.  Hence, $\omega (h) \to 0$
as $h \to 0$.  On the other hand, the polyconvexity of $W$ combined
with the very same arguments as in the proof of Lemma \ref{l:prelim1},
and the second of \eqref{truelim} yield
\begin{align*}
&  \liminf_{n \to \infty}
\int_{\Omega}\Big( W\argut{\varphi_n}{(P_n {+} h Q)} -
              W\argut{\varphi_n}{P_n} \Big) \dd x \\
&\geq \frac1h \int_{\Omega}\Big(W\argut{\varphi}{(P {+} h Q)} 
               - W\argut{\varphi}{P} \Big) \dd x.
\end{align*}
Estimate \eqref{diff-quot-argum} and the above inequality yield, for
all $h>0$, the estimate
\begin{equation}
\label{modulus}
\begin{aligned}
 & \liminf_{n \to \infty}
\int_{\Omega}  \Back{\nabla\varphi_n}{P_n} : Q  \dd x 
\\  & \quad   \geq \limsup_{n \to \infty} \frac1h
\int_{\Omega}\Big( W\argut{\varphi_n}{(P_n {+} h Q)} -  W\argut{\varphi_n}{P_n} 
 \Big) \dd x - \omega (h)
\\ & \quad  \geq \frac1h \int_{\Omega}\ Big(W\argut{\varphi}{(P {+} h
  Q)} - W\argut{\varphi}{P} \Big) \dd x   - \omega (h)
\\ & \quad  \geq \int_{\Omega}  \Back{\nabla\varphi}{P} : Q  \dd x    - 2\omega (h).
\end{aligned}
\end{equation}
where the last inequality follows from \eqref{diff-quot-argum} written
for $P$ instead of $P_n$.  Exchanging $Q$ with $-Q$, we analogously
infer that
\begin{equation}
\label{modulus-2} \limsup_{n \to \infty}   \int_{\Omega}
 \Back{\nabla\varphi_n}{P_n} : Q  \dd x  
 \leq   \int_{\Omega} \Back{\nabla\varphi}{P} : Q  \dd x   + 2
\omega(h).
\end{equation}
We conclude \eqref{weak-dw} taking the limit of \eqref{modulus} and
\eqref{modulus-2} as $h \to 0$.  

All in all, \eqref{strong-w1qg-star}, \eqref{weak-dkappa-1}, and
\eqref{weak-dw} imply that the sequence $\Xi_n$ in \eqref{contin-prop}
weakly converges in $L^{p'} (\Omega;\R^{d\times d})$ to $\Xi =
-\Delta_{\qg}P + \rmD K(P) + \Back{\nabla\varphi}{P}$, which
belongs to $\diff{t}P$. This proves the first of \eqref{contin-prop}.

\paragraph{\bf Step 5: Upper semicontinuity of the powers.}
The sequence $(\Pt {t_n}{P_n}{\Xi_n})_n$ from
\eqref{convs-4-closedness} is given for every $n\in \N$ by $\Pt
{t_n}{P_n}{\Xi_n} = -\pairing{}{W^{1,\qphi}}{
  \dell(t_n)}{\tilde{\varphi}_n}$ for some $\tilde{\varphi}_n \in
\Rt{t_n}{P_n}{\Xi_n}$, which thus fulfills $\Xi_n = - \Delta_{\qg}P_n
+ \rmD K(P_n) + \Back{\nabla\tilde{\varphi}_n}{P_n}$.  The
arguments from Steps 2 and 4 yield that, up to a further (not
relabeled) subsequence, $\tilde{\varphi}_n$ converges weakly in
$W^{1,\qphi}(\Omega;\R^d)$ to some $\tilde{\varphi} \in \mins{t}P $
and, in addition, $\Back{\nabla \tilde{\varphi}_n}{ P_n} \weakto
\Back{\nabla \tilde{\varphi}}P $ in $L^1(\Omega;\R^{d\times
  d})$. Hence $\Xi = \rmD K(P) - \Delta_{\qg}P +
\Back{\nabla\tilde{\varphi}}{P}$, i.e.\ $ \tilde\varphi \in
\Rt tP{\Xi}$.  Since $\ell \in
\rmC^1([0,T]);W^{1,\qphi}(\Omega;\R^d)^*)$, we have
\begin{equation}
 \label{proof-power}
 \lim_{n\to \infty}  -\pairing{}{W^{1,\qphi}}{ \dell(t_n)}{\tilde{\varphi}_n}  =  - \pairing{}{W^{1,\qphi}}{ \dell(t)}{\tilde{\varphi}} \leq \Pt {t}{P}{\Xi}
\end{equation}
  where the last inequality is  due to the definition of $ \Pt t{P}{\Xi}$  in \eqref{def-P-hat}.  From
\eqref{proof-power} we conclude the third relation in 
\eqref{contin-prop} and Lemma \ref{l:prelim5} is proved. 
\end{proof}

Finally, it remains to verify condition \eqref{eq:42-bis}, 
i.e.\ the \emph{variational sum rule}. For this, we apply Proposition  4.2 from \cite{MiRoSa13NADN}, which indeed holds for a general subdifferential $\diffname$ and for a family 
$(\Psi_P)_{P\in \domain}$ of dissipation potentials complying with \eqref{eq:psi-sum-1} and \eqref{basic-psi-2}. 
It
states that, if for every $(t,P) \in [0,T]\times \domain$ the set $\diff tP$ contains the Fr\'echet subdifferential $\partial\en(t,P)$, and 
if the pair $(\en,\diffname)$ complies with the closedness condition, then  \eqref{eq:42-bis} holds. 
Hence, the following result is a consequence of Lemmas \ref{l:prelim3}, \ref{l:prelim4} (guaranteeing the crucial
subdifferential inclusion \eqref{inclu-diff}), and  of Lemma  \ref{l:prelim5}. 

\begin{corollary}[Variational sum rule]\label{var-sum-rule} 
  Assume \eqref{kappa2}--\eqref{kappa1}, \eqref{loading},
  \eqref{w1}--\eqref{w3}, and \eqref{index-1}--\eqref{index-2}.  Then,
  the dissipation functionals $(\Psi_z)_{z \in \domain }$ and the
  reduced energy functional $\cE$ comply with the variational sum
  rule~\eqref{eq:42-bis}.
\end{corollary}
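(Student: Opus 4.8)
The plan is to obtain Corollary~\ref{var-sum-rule} as a direct application of the abstract sum-rule result \cite[Prop.~4.2]{MiRoSa13NADN}, whose hypotheses have been assembled, one by one, by Lemmas~\ref{l:prelim2}--\ref{l:prelim5}. That proposition states that, for a generalized gradient system $\grasys$ in which $(\Psi_P)_{P\in\domain}$ satisfies the convexity/lower semicontinuity condition \eqref{eq:psi-sum-1} and the uniform superlinearity \eqref{basic-psi-2}, $\en(t,\cdot)$ has compact sublevels as in \eqref{eq:17-bis}, the marginal subdifferential $\diff tP$ contains the Fr\'echet subdifferential $\frsub\en(t,P)$ for every $(t,P)\in[0,T]\times\domain$, and the pair $(\en,\diffname)$ enjoys the closedness property, the variational sum rule \eqref{eq:42-bis} holds. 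First I would simply record that every ingredient is available in our setting: \eqref{eq:psi-sum-1} and \eqref{basic-psi-2} --- together with the sharper growth bound \eqref{enhanced-coercivity-psi} --- are furnished by Lemma~\ref{l:prelim3}; the coercivity \eqref{eq:17-bis} is contained in Lemma~\ref{l:prelim2}; the inclusion $\frsub\en(t,P)\subset\diff tP$ is precisely \eqref{inclu-diff} in Lemma~\ref{l:prelim4}; and the closedness of $(\en,\diffname)$ follows a fortiori from the full condition \eqref{eq:468} established in Lemma~\ref{l:prelim5}. Invoking \cite[Prop.~4.2]{MiRoSa13NADN} then delivers \eqref{eq:42-bis} for our quintuple, which is the assertion of the corollary.

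For completeness I would also sketch, in a few lines, the mechanism behind \cite[Prop.~4.2]{MiRoSa13NADN} in our situation. Given $P_*\in\domain$ and $\tau>0$, a minimizer $\bar P$ of $P\mapsto\en(t,P)+\tau\Psi_{P_*}((P-P_*)/\tau)$ necessarily lies in $\domain$, the minimal value being $\le\en(t,P_*)<\infty$ since $\Psi_{P_*}(0)=0$. As $\bar P$ is a minimizer, $0$ lies in the Fr\'echet subdifferential at $\bar P$ of $\en(t,\cdot)+\tau\Psi_{P_*}((\cdot-P_*)/\tau)$; since $\Psi_{P_*}$ is convex and, by \eqref{eq:psi-sum-1}, everywhere finite on $\amb$, hence continuous, the sum rule for Fr\'echet subdifferentials with a convex continuous summand --- or, in the extended-valued generality of \cite{MiRoSa13NADN}, a Moreau--Yosida regularization of $\Psi_{P_*}$ followed by a limit passage --- produces $\Xi\in\frsub\en(t,\bar P)$ with $-\Xi$ in the convex subdifferential $\frsub\Psi_{P_*}\big((\bar P-P_*)/\tau\big)$ (the factor $\tau$ cancelling under the chain rule). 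In the limit passage the required compactness comes from the coercivity \eqref{eq:17-bis} and from the $\amb^*$-bound on stresses supplied by the superlinearity of $\Psi^*_{P_*}$ (cf.\ \eqref{enhanced-coercivity-psi}), while the identification of the limiting stress uses the closedness of $(\en,\diffname)$. Finally, \eqref{inclu-diff} upgrades $\Xi$ to an element of $\diff t{\bar P}$, and \eqref{eq:42-bis} follows.

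Since the abstract proposition does the heavy lifting, the corollary itself carries no new difficulty: the genuine obstacle --- verifying the closedness/continuity property for $(\en,\diffname)$ in the presence of the highly nonconvex, nonsmooth energy $\calI$ and of the backstress $B(\nabla\varphi,P)$, which is controlled only in $L^1$ --- has already been overcome in Lemma~\ref{l:prelim5}, through the minimality-plus-polyconvexity argument modelled on \cite{DaFrTo05QCGN,FraMie06ERCR}. The only point I would take care to spell out is that each $\Psi_{P_*}$ is everywhere finite on $\amb=L^p(\Omega;\R^{d\times d})$, hence continuous, so that its Fr\'echet and convex subdifferentials coincide and the sum rule quoted above applies; this is built into \eqref{eq:psi-sum-1} and follows concretely from the upper growth bound in \eqref{R3} via \eqref{def-Psip}. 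With that remark in place, the proof of Corollary~\ref{var-sum-rule} reduces to assembling the cited lemmas.
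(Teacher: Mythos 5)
Your proposal is correct and follows exactly the paper's route: the corollary is obtained by invoking \cite[Prop.\ 4.2]{MiRoSa13NADN}, whose hypotheses are supplied by Lemma \ref{l:prelim3} (conditions \eqref{eq:psi-sum-1} and \eqref{basic-psi-2}), the inclusion \eqref{inclu-diff} from Lemma \ref{l:prelim4}, and the closedness property from Lemma \ref{l:prelim5}. The additional sketch of the abstract proposition's mechanism is consistent but not needed.
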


\subsection{Conclusion of the proof of Theorem  \ref{th:EDI.Viscopl}}
\label{suu:ProofThMarg}

In view of Lemmas \ref{l:prelim1}--\ref{l:prelim5} and Corollary
\ref{var-sum-rule}, our Theorem \ref{thm:abstract-2} applies to the
generalized gradient system $(X,\en, \Psi,\diffname,\Ptname)$,
yielding the existence of EDI solutions to the viscoplastic Cauchy
problem in the form of the abstract doubly nonlinear equation
\eqref{abstract-dne}.  This finishes the proof of Theorem
\ref{th:EDI.Viscopl}.  \hfill $\Box$%

\begin{remark}[Missing chain rule]\slshape\label{rmk:failure}
  It remains an open problem to prove  or disprove  that the functional $\en$
   with differentials $(\diffname, \Ptname)$ 
  fulfills the CRI \eqref{eq:48strong}. Indeed, let  a curve   $P\in
  \AC([0,T];L^p(\Omega;\R^{d\times d}))$ and $\Xi\in L^1(0,T
  ;L^{p'}(\Omega;\R^{d\times d}))$ fulfill \eqref{conditions-1}. Then,
  $\dot P \in L^p(0,T;L^p(\Omega;\R^{d\times d}))$ and $\Xi \in
  L^{p'}(0,T; ;L^{p'}(\Omega;\R^{d\times d}))$, hence their duality
  pairing is well defined for almost all $t\in (0,T)$. But, in order
  to estimate it as required in \eqref{eq:48strong}, one would have to
  use the full information provided by the structure of the marginal
  subdifferential, i.e.\ by the formula $\Xi = -\Delta_{\qg}P+\rmD
  K(P) + \Back{\nabla\varphi}{P}$ with $\varphi \in \mins tP$.  Since
  there are no compensations between the terms contributing to $\Xi$,
  this would ultimately boil down to estimating individually the
  duality pairings $\pairing{}{}{-\Delta_{\qg}P}{\dot P}$,
  $\pairing{}{}{\rmD K(P)}{\dot P}$, and
  $\pairing{}{}{\Back{\nabla\varphi}{P} }{\dot P}$,  which seems to be out of
  reach. In fact, the stress control condition  \eqref{w3}(ii)  only
  ensures that  $ \Back{\nabla\varphi}{P} \in L^\infty (0,T;
  L^1(\Omega;\R^{d\times d}))$, and accordingly by a comparison
  argument we may only conclude that $-\Delta_{\qg}P \in L^\infty (0,T;
  L^1(\Omega;\R^{d\times d}))$.  Hence, neither the first, nor the
  third duality pairings are well defined.
 
  As we will see in Section \ref{s:6}, this problem can be circumvented
  if we add to the stored energy $\calI$ a term which improves the
  spatial estimates for the Mandel stress tensor.
  \end{remark}

\section{Existence of energy solutions for a  regularized system}
\label{s:6}
We now investigate an alternative model for finite-strain
viscoplasticity, where the stored energy $\calI$ from
\eqref{stored-energy} is augmented by a regularizing contribution $\calI_3$,
 multiplied  by a (small, but positive) parameter $\rpam>0$. We will show
that the reduced energy $\calE_\rpam$ accordingly obtained by
minimizing out the deformations complies with the CRI
\eqref{eq:48strong}, in addition to conditions
\eqref{basic-1}--\eqref{eq:468}. From Corollary \ref{cor:abstract-1}
we will thus deduce the existence of EDB solutions to the generalized
gradient system \eqref{abstract-dne} driven by the regularized energy
$\calE_\rpam$.  This will yield solutions to a version of the PDE
system \eqref{pde} with a different minimum problem for $\varphi$, but
the \emph{same} flow rule for $P$. For this, it will be crucial that
$\calI_3$  in \eqref{calF-reg} below does not depend on the plastic variable $P$.
\subsection{The regularized model}
\label{su:6.1}
In the next lines, we specify the form of the regularizing contribution  to the stored energy and accordingly introduce the energy functional driving the 
regularized gradient system.
\subsubsection*{The regularized stored energy} 
For $\rpam>0$ fixed we define the functional $\calI_\rpam : [0,T]\times W^{1,\qphi} (\Omega;\R^{d}) \times L^p(\Omega;\R^{d\times d}) \to (-\infty,\infty]$ by
\begin{equation}
\label{stored-energy-rpam} \calI_\rpam(t,\varphi,P) := \calE_1 (P) + \calI_2
(t,\varphi,P) + \rpam \calI_3 (\varphi),
\end{equation}
with
$\calE_1$ and $\calI_2$ from \eqref{ene1} and \eqref{I2}, respectively, and $\calI_3: W^{1,\qphi} (\Omega;\R^{d})  \to [0,\infty]$ defined by  
\begin{equation}
\label{calI.3}
\calI_3 (\varphi): =\left\{
\begin{array}{ll}
 \int_\Omega \widetilde{W} (x,\nabla \varphi(x)) \dd x & \text{if }  \widetilde{W}(\cdot,\nabla \varphi) \in L^1 (\Omega),
 \\
 \infty & \text{otherwise}.
 \end{array}
 \right.
\end{equation}
Accordingly, the set of admissible deformations is now
\begin{equation}
\label{calF-reg}
\rcalF:= \calF \cap \{ \varphi \in W^{1,\qphi}(\Omega;\R^d)\, : \
\widetilde{W}(\cdot,\nabla \varphi) \in L^1 (\Omega)\}. 
\end{equation}
We impose that the elastic energy $\widetilde{W}  $ 
in \eqref{calI.3} 
is bounded from below, that 
\begin{equation}
\label{w2-tilde} \tag{5.$\widetilde{\mathrm{W}}_1$}
\text{for all } x \in \Omega \text{ the functional } \widetilde{W}
(x,\cdot) : \R^{d \times d} \to [0,\infty] \text{ is
  polyconvex}, 
\end{equation}
and that $\widetilde W$ controls $W$ in the following
sense
\begin{equation}
\label{w1-tilde} \tag{5.$\widetilde{\mathrm{W}}_2$}
\begin{aligned}
\forall\, S>0 \ \exists \,  C_1^S>0  \  &
\forall\, x \in \Omega \
 \forall\, F, \, P \in  \R^{d \times d} \text{ with } |P| + |P^{-1}| \leq S\, : 
\\
& |W(x,F P^{-1})|^{p'} \leq  C_1^S  (\widetilde{W} (x,F) +1).
\end{aligned}
\end{equation}
Recall that $p'= \frac{p}{p-1}$ is the conjugate exponent to $p$: in
fact, \eqref{w1-tilde} is tuned to the coercivity/growth properties of
the dissipation metric $\rmetric$.

\begin{example}\slshape
\label{ex-tildeW}
Condition \eqref{w1-tilde} is satisfied if, for example, in addition
to \eqref{w1}--\eqref{w3} the functional $W$ fulfills for some $q_W>1$
 the upper estimate 
\begin{subequations}
\label{exs-W}
\begin{equation}
\label{exs-W-1}
\exists\, C_6>0  \ \forall\, x \in \Omega \  \forall\, F \in \GLD \, :
\ \ W(x,F) \leq C_6 (|F|^{q_W} + |F^{-1}|^{q_W}+1), 
\end{equation}
 whereas  $\widetilde W$ is $\infty$ on $\R^{d\times
  d}{\setminus} \GLD$
  and satisfies the coercivity estimate 
\begin{equation}
\label{exs-W-2}
\exists\, C_7, \, C_8 >0  \ \forall\, F \in \GLD \, : \ \
\widetilde{W}(x,F) \geq C_7  (|F|^{p'q_W} + |F^{-1}|^{p'q_W}) - C_8. 
\end{equation}
\end{subequations}
\end{example}

\subsubsection*{The regularized reduced energy}
We now introduce the energy functional obtained by minimizing out the
deformations from $\calI_\rpam$, viz.
\begin{equation}
\label{reduced-energy-eps} \en_\rpam (t,P) : = \inf\{
\calI_\rpam(t,\varphi,P)\, : \ \varphi \in \rcalF\} \quad \text{for }
(t,P) \in [0,T]\times X,  
\end{equation}
with $X = L^p(\Omega;\R^{d\times d})$.
Observe that $\mathrm{dom} (\en_\rpam) = \mathrm{dom} (\en) = [0,T]
\times \domain$, with $\domain$ fulfilling \eqref{domain}. We also
consider the reduced energy
\begin{equation}
\label{E2-rpam}
\calE_{2,\rpam}(t,P) : = \inf\{\calI_2(t,\varphi,P) + \rpam  \calI_3
(\varphi)\, : \ \varphi \in \rcalF \}, 
\end{equation}
so that $\en_\rpam(t,P) = \calE_1 (P) + \calE_{2,\rpam}(t,P)$.

\subsubsection*{The marginal subdifferential} 
Let $\diffname_\rpam : [0,T]\times X 
\rightrightarrows X^*$  be given by
\begin{equation}
\label{regularized-diff}
\diffe {\rpam}tP= \{\rmD_P \calI_\rpam  (t,\varphi,P)\, : \ \varphi
\in \minse {\rpam}{t}P\},
\end{equation}
where  $\minse \rpam tP$  is the set of minimizers for $\calI_\rpam
(t,\cdot, P) $ over $\rcalF$. Since the functional $\calI_3$ does not
depend on $P$, we have
\[
\diffe {\rpam}tP= \{\rmD_P \calI  (t,\varphi,P)\, : \ \varphi \in
\minse {\rpam}{t}P\} = \rmD \en_1(P) + \{\Backx{\cdot}{\nabla\varphi}{P}\,
: \ \varphi \in \minse \rpam tP \}. 
\]
Hence, the doubly nonlinear evolution equation 
\begin{equation}
\label{regularized-dne}
\partial \Psi_P (\dot
P(t)) + \diffe \rpam t{P(t)} \ni 0 \qquad \text{a.e.\ in $(0,T)$}
\end{equation}
 associated with
the generalized gradient system $\grasyse{\rpam}$ (with the
dissipation potentials $(\Psi_P)_{P\in \domain}$ from \eqref{def-Psip}
and $\Ptname_\rpam: \mathrm{graph}(\diffname_\rpam) \to \R$ defined by
analogy with \eqref{def-P-hat}), yields a solution to the
\emph{regularized PDE system}
\begin{subequations}
\label{pde1}
\begin{align}
&
\label{pde1-1}
 \varphi(t) \in \argmin\left \{ \int_{\Omega}
 W(x,\nabla \widetilde \varphi(x)  P^{-1}(t,x)) +\eta  \widetilde{W} (x, \nabla
 \widetilde{\varphi}(x))  \dd x  \, : \ \widetilde{\varphi}
\in \rcalF \right \},  
\\
& 
\label{pde1-2}
0 \in \partial \rmetric(x,\dot{P} \inve{P})\transpi{P} -\Delta_{\qg}P
+ \rmD K(P) + \Backx{x}{\nabla \varphi}{P} \ \text{ in } \Omega,
\end{align}
\end{subequations}
where \eqref{pde1-2} is supplemented with homogeneous
Neumann boundary conditions  for $P$  on $\partial\Omega$.

\subsubsection*{Existence for the regularized system}
Our next result states the existence of  EDB solutions for  the
Cauchy problem associated with  system \eqref{pde1}.

\begin{theorem}[EDB solutions for viscoplasticity] 
\label{th:EDB.Viscopl} Assume
\eqref{kappa2}--\eqref{kappa1}, \eqref{loading},
\eqref{w1}--\eqref{w3}, \eqref{R1}--\eqref{R3}, and
\eqref{w2-tilde}--\eqref{w1-tilde}.  Suppose that $\eta>0 $ and
that the exponents $ \qp,$ $\qf, $ $ \qg,$ and $ \qga$ in
\eqref{form-of-h}, \eqref{kappa1}, \eqref{loading}, and \eqref{w1}
comply with \eqref{index-1} and \eqref{index-2}.

Then, for every initial datum $P_0$ as in \eqref{condition-datum}
there exist $P \in L^\infty (0,T;W^{1,\qg}(\Omega;\R^{d\times d}))
\cap W^{1,p} (0,T; L^p(\Omega;\R^{d \times d}))$ and
$\varphi \in L^\infty (0,T;W^{1,\qphi}(\Omega;\R^{d}))$ solving the
Neumann boundary value problem for system \eqref{pde1} and complying
with the initial condition $P(0,\cdot) = P_0(\cdot)$ a.e.\ in
$\Omega$.

Moreover, the function $t \mapsto \calI_\rpam(t,P(t))$ is absolutely
continuous on $[0,T]$ and, setting 
\[
\Xi(t,x) =-\Delta_{\qg} P(t,x )  + \rmD K( P(t,x))
 + \Backx{x}{\nabla\varphi(t,x)}{P(t,x)}, 
\]
we have $\Xi \in L^{p'} (0,T;L^{p'}(\Omega;\R^{d \times d}))$, and the
pair $(P,\Xi)$ satisfies for all $0 \leq s \leq t \leq T$ the
energy-dissipation balance
\begin{align}
\nonumber
 \calI_\param(t,\varphi(t),P(t)) + \int_s^t \!\int_{\Omega}\!  & \left(
   \rmetric(x,\dot{P}(r,x) 
\inve{P(r,x)} ) + \rmetric^*(x,-\Xi(r,x) \transp{P(r,x)}
)\right)  \!\dd x  \dd r 
\\ &  \label{concrete-enid-rpam}
=
\calI_\param(s,\varphi(s),P(s)) -\int_s^t \!
\pairing{}{W^{1,\qphi}\OOd}{\dell(r)}{\varphi(r)}\dd r.
\end{align}
\end{theorem}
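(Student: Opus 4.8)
The plan is to deduce Theorem~\ref{th:EDB.Viscopl} from Corollary~\ref{cor:abstract-1} (i.e.\ Theorem~\ref{thm:abstract-2} together with the chain-rule inequality) applied to the regularized generalized gradient system $\grasyse{\rpam}$, the decisive new input being that the extra term $\rpam\calI_3$ promotes the backstress from $L^1$ to $L^{p'}$, which is precisely what is missing in the unregularized case.

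\textbf{Stage 1: abstract assumptions for $\grasyse{\rpam}$.} Since $\calI_3$ is nonnegative, $P$-independent and polyconvex by~\eqref{w2-tilde}, all the verifications of Section~\ref{s:5} carry over almost verbatim. From $\en_\rpam\ge\en$ one gets the domain description~\eqref{domain}, the coercivity~\eqref{eq:17-bis} (sublevels of $\en_\rpam$ are closed subsets of the compact sublevels of $\en$) and the bound~\eqref{conse-2}; lower semicontinuity follows as in Lemma~\ref{l:prelim2}, using also the weak lower semicontinuity of the polyconvex $\calI_3$ and the existence of minimizers of $\calI_\rpam(t,\cdot,P)$ over $\rcalF$ by the direct method; the time-Lipschitz bound~\eqref{eq:diffclass_a} is~\eqref{e:absolute} with the $\rpam\calI_3$-terms cancelling; the inclusion $\partial_{L^p}\en_\rpam(t,P)\subset\diffe{\rpam}{t}{P}$ and the variational sum rule~\eqref{eq:42-bis} follow as in Lemma~\ref{l:prelim4} and Corollary~\ref{var-sum-rule} (the pointwise estimates~\eqref{key-Alex-1}--\eqref{key-Alex-2} are untouched and $\calI_3$ contributes nothing to $\rmD_P$); the closedness~\eqref{eq:468} and the properties of $\Ptname_\rpam$ follow as in Lemmas~\ref{l:prelim4-5} and~\ref{l:prelim5}. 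The genuinely new ingredient is a \emph{strengthened backstress bound}: on a sublevel $\calG_\rpam(P)\le S$ we have $\|P\|_{L^\infty}+\|\inve P\|_{L^\infty}\le C_S$ by~\eqref{conse-2}, hence by~\eqref{w1-tilde} $\|W(\cdot,\nabla\varphi\inve P)\|_{L^{p'}}^{p'}\le C_1^{C_S}(\calI_3(\varphi)+|\Omega|)$, and at a minimizer $\varphi$ the quantity $\calI_3(\varphi)$ is controlled by $\en_\rpam(t,P)$ (because $\calI_2$ is bounded below by~\eqref{esti-1}); combining with~\eqref{w3}(ii), $|\Back{\nabla\varphi}{P}|\le C_4(W(\nabla\varphi\inve P)+1)\|\inve P\|_{L^\infty}$ gives a uniform bound for $\|\Back{\nabla\varphi}{P}\|_{L^{p'}}$ on energy sublevels, improving~\eqref{even-better2}. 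Consequently, for $\Xi\in\diffe{\rpam}{t}{P}\subset X^*$ the three summands $-\Delta_\qg P$, $\rmD K(P)$, $\Back{\nabla\varphi}{P}$ lie \emph{individually} in $L^{p'}(\Omega;\R^{d\times d})$ ($\rmD K(P)\in L^\infty$ and $\Back{\nabla\varphi}{P}\in L^{p'}$ are now established, and $-\Delta_\qg P$ follows by comparison). Theorem~\ref{thm:abstract-2} then yields an EDI solution $(\varphi,P)$ with $P\in L^\infty(0,T;W^{1,\qg})\cap W^{1,p}(0,T;L^p)$, $\varphi\in L^\infty(0,T;W^{1,\qphi})$ and, using~\eqref{enhanced-coercivity-psi} and the integral bound of $\Psi_P^*(-\Xi)$, $\Xi\in L^{p'}(0,T;L^{p'}(\Omega;\R^{d\times d}))$.

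\textbf{Stage 2: bootstrap and the quadratic/convex terms of the chain rule.} It remains to verify the CRI~\eqref{eq:48strong} for $(\en_\rpam,\diffname_\rpam,\Ptname_\rpam)$; Corollary~\ref{cor:abstract-1} (equivalently Proposition~\ref{prop:weak-to-en}) then promotes the EDI solution to an EDB solution. Let $P,\Xi$ satisfy~\eqref{conditions-1}. From $\sup_t|\en_\rpam(t,P(t))|<\infty$ and~\eqref{conse-2}, $P$ is bounded in $L^\infty(0,T;W^{1,\qg})$ and $\inve P$ in $L^\infty(0,T;W^{1,\tilde q})$; from $\int_0^T\Psi_{P(t)}(\dot P(t))\,\rmd t<\infty$ and~\eqref{enhanced-coercivity-psi}, $\dot P\in L^p(0,T;L^p)$; and since $W^{1,\qg}\Subset\rmC^0(\overline\Omega)$, the Aubin--Lions--Simon lemma gives $P\in\rmC^0([0,T];\rmC^0(\overline\Omega))$, so $\sup_{|t-s|\le\rho}\|P(t)-P(s)\|_{L^\infty}\to0$ as $\rho\down0$. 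By Stage~1, $-\Delta_\qg P$, $\rmD K(P)$, $\Back{\nabla\varphi}{P}\in L^{p'}(0,T;L^{p'})$ individually. Now split
\[
 \en_\rpam(t,P(t)) = \tfrac1\qg\|\nabla P(t)\|_{L^\qg}^\qg + \int_\Omega K(P(t))\,\rmd x + \calE_{2,\rpam}(t,P(t)) .
\]
For the first term the convexity of $\tfrac1\qg|\cdot|^\qg$ shows, upon integration over $\Omega$, that $-\Delta_\qg P(t)\in L^{p'}$ lies in the $L^p$-subdifferential of $Q\mapsto\tfrac1\qg\|\nabla Q\|_{L^\qg}^\qg$ at $P(t)$, so the standard chain rule for convex functionals along $P\in W^{1,p}(0,T;L^p)$ gives absolute continuity with derivative $\pairing{}{\amb}{-\Delta_\qg P(t)}{\dot P(t)}$. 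For the second term, $P(t)$ stays in a fixed compact subset of $\GLD$ on which $\rmD K$ is Lipschitz by~\eqref{kappa2}, so it is absolutely continuous with derivative $\pairing{}{\amb}{\rmD K(P(t))}{\dot P(t)}$.

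\textbf{Stage 3: the reduced term $\calE_{2,\rpam}$, and conclusion.} Fix a point $t_0$ (a.e.) at which $P$ is $L^p$-differentiable and pick $\varphi_0\in\Rt{t_0}{P(t_0)}{\Xi(t_0)}$ realizing the maximum in the definition of $\Pt{t_0}{P(t_0)}{\Xi(t_0)}$ (cf.~\eqref{def-P-hat}), so that $\Back{\nabla\varphi_0}{P(t_0)}$ is the backstress summand of $\Xi(t_0)$ and $\pairing{}{W^{1,\qphi}\OOd}{-\dell(t_0)}{\varphi_0}=\Pt{t_0}{P(t_0)}{\Xi(t_0)}$. By minimality, $g(s):=\calI_2(s,\varphi_0,P(s))+\rpam\calI_3(\varphi_0)\ge\calE_{2,\rpam}(s,P(s))$ for all $s$, with equality at $s=t_0$. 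Using~\eqref{key-Alex-2} with $F=\nabla\varphi_0$, $P_1=P(t_0)$, $P_2=P(s)$ (admissible for $s$ close to $t_0$), together with~\eqref{loading} and $W(\nabla\varphi_0\inve{P(t_0)})\in L^{p'}$ (this last fact being exactly what~\eqref{w1-tilde} buys us), one checks that $g$ is differentiable at $t_0$ with
\[
 g'(t_0) = \pairing{}{\amb}{\Back{\nabla\varphi_0}{P(t_0)}}{\dot P(t_0)} + \Pt{t_0}{P(t_0)}{\Xi(t_0)} ,
\]
since the quadratic remainder of~\eqref{key-Alex-2}, after estimating $|P(s){-}P(t_0)|^2\le\|P(s){-}P(t_0)\|_{L^\infty}|P(s){-}P(t_0)|$ and dividing by $|s-t_0|$, is at most $C\|P(s){-}P(t_0)\|_{L^\infty}\,\|W(\nabla\varphi_0\inve{P(t_0)}){+}1\|_{L^{p'}}\,\|P(s){-}P(t_0)\|_{L^p}/|s-t_0|\to0$. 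Moreover $s\mapsto\calE_{2,\rpam}(s,P(s))$ is absolutely continuous: its increment is bounded by $C|t-s|(\calG_\rpam(P){+}1)$ (time-Lipschitz, $\calI_3$ being $t$-independent) plus, via~\eqref{key-Alex-2} and the uniform $L^{p'}$-bounds of Stage~1, by $C\|P(t)-P(s)\|_{L^p}\le C\int_s^t\|\dot P\|_{L^p}$. A nonnegative absolutely continuous function touching a differentiable function from below at $t_0$ has derivative $0$ there wherever it is differentiable, so $\frac{\rmd}{\rmd t}\calE_{2,\rpam}(t,P(t))|_{t_0}=g'(t_0)$ for a.e.\ $t_0$. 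Summing the three contributions, and recalling $\Xi(t)=-\Delta_\qg P(t)+\rmD K(P(t))+\Back{\nabla\varphi_0}{P(t)}$,
\[
 \frac{\rmd}{\rmd t}\en_\rpam(t,P(t)) = \pairing{}{\amb}{\Xi(t)}{\dot P(t)} + \Pt{t}{P(t)}{\Xi(t)} \quad \foraa t\in(0,T) ,
\]
which is the CRI~\eqref{eq:48strong} (in fact with equality). Hence the EDI solution of Stage~1 is an EDB solution: $t\mapsto\en_\rpam(t,P(t))=\calI_\rpam(t,\varphi(t),P(t))$ is absolutely continuous, $(P,\Xi)$ satisfies~\eqref{pointwise-sense}, and the abstract EDB~\eqref{abstract-enid}, rewritten via the representation~\eqref{repre-psi-star} of $\Psi_P^*$ and $\Pt{r}{P(r)}{\Xi(r)}=\pairing{}{W^{1,\qphi}\OOd}{-\dell(r)}{\varphi(r)}$, becomes the energy--dissipation balance~\eqref{concrete-enid-rpam}. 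Finally~\eqref{pointwise-sense}, rewritten through~\eqref{rdiss-def}, is the flow rule~\eqref{pde1-2} (with homogeneous Neumann conditions for $P$, coming from the absence of boundary terms in $\partial_{W^{1,\qg}}$ of $\tfrac1\qg\|\nabla\cdot\|_{L^\qg}^\qg$), while $\varphi(t)\in\minse{\rpam}{t}{P(t)}$ is the minimum problem~\eqref{pde1-1}. \textbf{The main obstacle} is precisely the differentiation in Stage~3 of $t\mapsto\int_\Omega W(\nabla\varphi_0\inve{P(t)})\,\rmd x$ along the merely $L^p$-valued curve $P$: as explained in Remark~\ref{rmk:failure} this is out of reach for the unregularized energy, where the Mandel stress, hence $\Back{\nabla\varphi}{P}$ and $\Delta_\qg P$, are only in $L^1$; assumption~\eqref{w1-tilde} on $\widetilde W$ is designed to place $W(\cdot,\nabla\varphi_0\inve P)$, and therefore the backstress and $\Delta_\qg P$, in $L^{p'}$, so that the quadratic remainder of~\eqref{key-Alex-2}, of size $\|P(s){-}P(t_0)\|_{L^\infty}\|P(s){-}P(t_0)\|_{L^p}$ tested against an $L^{p'}$-weight, is $o(|s-t_0|)$ and the chain rule closes.
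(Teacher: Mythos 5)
Your proposal is correct and follows essentially the same route as the paper: verification of the abstract hypotheses for the regularized system with the key $L^{p'}$ backstress bound coming from \eqref{w1-tilde} (the paper's Lemma \ref{l:prelim4-reg}), the convex/Fr\'echet chain rule for the $\calE_1$-part, and the chain rule for $\calE_{2,\rpam}$ via absolute continuity plus the pointwise estimate \eqref{key-Alex-2} evaluated along a minimizer (the paper's Lemma \ref{l:prelim6}), followed by Corollary \ref{cor:abstract-1}. Your "touching from below at a differentiability point" step is just the two-sided version of the paper's one-sided difference-quotient argument, so the substance is identical.
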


\subsection{Proof of Theorem \ref{th:EDB.Viscopl} }
\label{su:6.2}
In what follows, we will verify that the regularized energy
$\en_\param$ complies with the conditions of Corollary
\ref{cor:abstract-1}, again omitting to denote the $x$-dependence of
$W$ and $R$, and the related quantities, in the proofs of the various
lemmas ahead.

The analogs of Lemmas \ref{l:prelim0}, \ref{l:prelim1}, and
\ref{l:prelim2} hold for the regularized energy $\en_\param$, which
therefore complies with \eqref{basic-1}, \eqref{eq:17-bis}, and
\eqref{eq:diffclass_a}.  We now turn to examining the marginal
subdifferential $\diffname_\rpam$. With the following result we prove
that $ \diffe \rpam tP$ contains the Fr\'echet subdifferential
$\partial \en_\rpam (t,P) $ at every $(t,P) \in [0,T]\times \domain $.
Lemma \ref{l:prelim4-reg} below also features an
$L^{p'}(\Omega;\R^{d\times d})$-estimate of the backstress
contribution $B$ to $ \diffe \rpam tP $ (cf.\ \eqref{even-better2-eps}
ahead, to be compared with estimate \eqref{even-better2} in Lemma
\ref{l:prelim4}), and a sort of \emph{uniform superdifferentiability}
property for the regularized energy $\en_\rpam$, cf.\
\eqref{unif-superdiff} ahead. Once again, for its proof we resort to
Lemma \ref{l:estimates-pointwise}.  Estimate \eqref{unif-superdiff}
will play a crucial role in the proof of the CRI \eqref{eq:48strong},
cf.\ Lemma \ref{l:prelim6} ahead.

\begin{lemma}[Properties of the marginal subdifferential 
$\diffname_\param $ for $\en_\param$]\label{l:prelim4-reg} 
Assume \eqref{kappa2}--\eqref{kappa1},
\eqref{loading}, \eqref{w1}--\eqref{w3},
\eqref{index-1}--\eqref{index-2}, and
\eqref{w2-tilde}--\eqref{w1-tilde}.  Then, we have the
following:\medskip

\noindent
1. For every $(t,P) \in [0,T]\times \domain  $ and
$\phimin \in \minse \rpam tP$ we have  $ \Back{\nabla\varphi}{P} \in
L^{p'}(\Omega;\R^{d\times d})$, and
\begin{equation}
\label{even-better2-eps}
\begin{aligned}
\exists\,   \tilde{c}_7>0  \  \  & \forall\,(t,P) \in [0,T]{\times} \domain 
\ \forall\, \phimin \in  \minse \rpam tP:   
 \\ & 
\|\Back{\nabla\varphi}{P} \|_{L^{p'}} \  \leq \ \tilde{c}_7
\|\inve{P} \|_{L^\infty}  \left( \mathcal{G}_\param(P){+}1
\right) \,. \end{aligned} 
\end{equation}\medskip

\noindent
2. For every $(t,P)\in [0,T]\times \domain$
we have $\partial \en_\rpam (t,P) \subset \diffe \rpam tP$.\medskip

\noindent 3.  There exists $\theta \in (0,1)$ and for all $S>0$ there
exist constants $C_3^S>0$ and $r^*>0$ such that for every $P_1,\, P_2
\in \domain$ such that
\begin{subequations}
\label{unif-superdiff}
\begin{equation}
\label{conditions-G}
\begin{aligned}
\max\{ \mathcal{G}_\eta(P_1),\, \mathcal{G}_\eta(P_2) \} \leq S   \text{ and } \| P_1{-}P_2\|_{L^p} \leq r^*
\end{aligned}
\end{equation}
and for all $\varphi_1 \in \minse \rpam {t}{P_1}$ there holds
\begin{equation}
\label{used-ch-rule-magari}
\begin{aligned} 
\en_{2,\rpam} (t,P_2) - \en_{2,\rpam} (t,P_1)  - \int_\Omega 
\Back{\nabla\varphi_1}{P_1}  {:} (P_2  {-}P_1)   \dd x  
  \ \leq \   C_2^S \|P_2 {-} P_1 \|_{L^p}^{2-\theta}\,.
\end{aligned}
\end{equation}%
\end{subequations}
\end{lemma}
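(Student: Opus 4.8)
The plan is to follow closely the proof scheme of Lemma~\ref{l:prelim4}, the single new ingredient being the growth condition \eqref{w1-tilde}, which upgrades the integrability of the backstress term from $L^1$ to $L^{p'}$; throughout, the constants denoted $C(S)$ are bounded on the sublevel $\{\mathcal{G}_\rpam\le S\}$ by the coercivity of $\en_\rpam$ (the analogue of \eqref{conse-2}) together with the compact embeddings $W^{1,\qg},\,W^{1,\tilde q}\subset\rmC^0$, which yield $\|P\|_{L^\infty}+\|\inve P\|_{L^\infty}\le R(S)$ there.

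\textbf{Ad 1.} From $\Back{\nabla\varphi}{P}=\Man{\nabla\varphi\inve P}\transpi P$ and the multiplicative stress control \eqref{w3}(ii) one has the pointwise bound $|\Back{\nabla\varphi}{P}|\le C_4\,(W(\nabla\varphi\inve P)+1)\,|\transpi P|$, whence $\|\Back{\nabla\varphi}{P}\|_{L^{p'}}^{p'}\le C\,\|\inve P\|_{L^\infty}^{p'}\int_\Omega(W(\nabla\varphi\inve P)+1)^{p'}\dd x$. I would then invoke \eqref{w1-tilde} (with $S\ge\|P\|_{L^\infty}+\|\inve P\|_{L^\infty}$) to get $\int_\Omega W(\nabla\varphi\inve P)^{p'}\dd x\le C_1^S\int_\Omega(\widetilde W(\nabla\varphi)+1)\dd x = C_1^S(\calI_3(\varphi)+|\Omega|)$, and finally exploit the minimality of $\varphi\in\minse\rpam tP$: since $\rpam\calI_3(\varphi)=\en_\rpam(t,P)-\calE_1(P)-\calI_2(t,\varphi,P)$, the lower bounds for $\calE_1$ and for $\calI_2$ (from the analogue of \eqref{esti-1}, absorbing the $\|P\|_{L^{\qp}}^{\qp}$ term by the coercivity estimate \eqref{esti-2} for $\en_\rpam$) together with \eqref{energy-used-later} give $\calI_3(\varphi)\le C(\mathcal{G}_\rpam(P)+1)$. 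Collecting these estimates yields \eqref{even-better2-eps}.

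\textbf{Ad 2.} This is the verbatim analogue of \eqref{stronger-form}/\eqref{inclu-diff}. For $\Xi\in\partial\en_\rpam(t,P)$ and $\varphi\in\minse\rpam tP$, writing $\overline\en_1(P):=\tfrac1\qg\int_\Omega|\nabla P|^\qg\dd x$ and using $\en_\rpam(t,P)=\overline\en_1(P)+\int_\Omega K(P)\dd x+\en_{2,\rpam}(t,P)$, one checks that $\Xi-\rmD K(P)-\Back{\nabla\varphi}{P}$ lies in the Fr\'echet subdifferential (w.r.t.\ $W^{1,\qg}$) of $\overline\en_1$ at $P$, which equals $\{-\Delta_\qg P\}$; since all remaining summands lie in $L^1$, this forces $\Xi=-\Delta_\qg P+\rmD K(P)+\Back{\nabla\varphi}{P}\in\diffe\rpam tP$. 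The verification splits $\Lambda(\overline\en_1)=\Lambda(\en_\rpam)+\Lambda(K)+\Lambda(\en_{2,\rpam})$ exactly as before: $\Lambda(\en_\rpam)\ge o(\|P_n-P\|_{L^p})$ by definition of $\partial\en_\rpam$, $\Lambda(K)\ge -C\|P_n-P\|_{L^2}^2$ by $K\in\rmC^2$, and for $\Lambda(\en_{2,\rpam})$ one tests with $\varphi$, so that $\en_{2,\rpam}(t,P_n)\le\calI_2(t,\varphi,P_n)+\rpam\calI_3(\varphi)$; here it is essential that $\calI_3$ is $P$-independent, so the $\rpam\calI_3(\varphi)$ terms cancel, leaving exactly the quantity estimated by \eqref{key-Alex-2}, which is $-C(\mathcal{G}_\rpam(P)+1)\|P_n-P\|_{L^\infty}^2=o(\|P_n-P\|_{W^{1,\qg}})$.

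\textbf{Ad 3.} For the uniform superdifferentiability estimate I would again test the minimum problem for $P_2$ with a minimizer $\varphi_1\in\minse\rpam t{P_1}$; since $\varphi_1\in\rcalF$ is admissible for $P_2$ as well, and the loading term and (crucially) the $\rpam\calI_3(\varphi_1)$ term drop out, one gets $\en_{2,\rpam}(t,P_2)-\en_{2,\rpam}(t,P_1)\le\int_\Omega\bigl(W(\nabla\varphi_1\inve{P_2})-W(\nabla\varphi_1\inve{P_1})\bigr)\dd x$, hence the left-hand side of \eqref{used-ch-rule-magari} is at most $\int_\Omega\bigl(W(\nabla\varphi_1\inve{P_2})-W(\nabla\varphi_1\inve{P_1})-\Back{\nabla\varphi_1}{P_1}:(P_2-P_1)\bigr)\dd x$. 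Now \eqref{key-Alex-2} requires $\|P_1-P_2\|_{L^\infty}\le\tilde r$, whereas we only control $\|P_1-P_2\|_{L^p}$: this is the main obstacle, and I would bridge it by interpolation. On $\{\mathcal{G}_\rpam\le S\}$ we have $\|P_i\|_{W^{1,\qg}}\le R(S)$ with $\qg>d$, so $\|P_1-P_2\|_{L^\infty}\le C(S)\|P_1-P_2\|_{W^{1,\qg}}^{1-\theta'}\|P_1-P_2\|_{L^p}^{\theta'}\le C(S)\|P_1-P_2\|_{L^p}^{\theta'}$ for some $\theta'\in(0,1)$, and choosing $r^*$ accordingly ensures $\|P_1-P_2\|_{L^\infty}\le\tilde r$. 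Then \eqref{key-Alex-2} bounds the left-hand side by $C_W\int_\Omega(W(\nabla\varphi_1\inve{P_1})+1)|P_1-P_2|^2\dd x$; using that $W(\nabla\varphi_1\inve{P_1})+1\in L^{p'}$ with norm $\le C(S)$ (as in part~1), H\"older's inequality, and the interpolation $\|P_1-P_2\|_{L^{2p}}^2\le\|P_1-P_2\|_{L^\infty}\|P_1-P_2\|_{L^p}\le C(S)\|P_1-P_2\|_{L^p}^{1+\theta'}$, one arrives at \eqref{used-ch-rule-magari} with $\theta:=1-\theta'\in(0,1)$. The rest is bookkeeping with the generic constants and with the $\widetilde W$-controlled $L^{p'}$-integrability coming from \eqref{w1-tilde}.
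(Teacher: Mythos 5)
Your proposal is correct and follows essentially the same route as the paper's proof: part 1 via the Mandel stress control \eqref{w3}(ii) combined with \eqref{w1-tilde} and the energy bound on $\calI_3(\varphi)$ at minimizers; part 2 by repeating the decomposition argument of Lemma \ref{l:prelim4} (with the observation that the $P$-independent term $\rpam\calI_3$ cancels); part 3 by testing the minimization for $P_2$ with $\varphi_1$, applying \eqref{key-Alex-2} after securing $\|P_1{-}P_2\|_{L^\infty}\le\tilde r$ through the Gagliardo--Nirenberg interpolation between $W^{1,\qg}$ and $L^p$, and then H\"older plus the $L^{p'}$-bound on $W(\nabla\varphi_1\inve{P_1})$ coming from $\widetilde W$. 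Your exponent $\theta'$ is just $1-\theta$ in the paper's notation \eqref{GN-now-used}, and the bookkeeping matches.
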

\begin{proof} 
 \textbf{Ad (1):} The proof follows using the argument developed for
 \eqref{inclu-diff}.\medskip 

\noindent\textbf{Ad (2):}
For the proof of \eqref{even-better2-eps}, it is sufficient to combine
\eqref{w3}(ii)  and \eqref{w1-tilde}, viz.%
\begin{equation}
\label{before-too-bad}
\begin{aligned}
&\int_{\Omega}  |\Back{\nabla\varphi(x)}{P(x)} |^{p'}  \dd x
\ \leq \   C  
\int_{\Omega} \left(W(\nabla \varphi(x) \inve{P(x)}) +1 \right)^{p'}
|\transpi{P(x)}|^{p'} \dd x  
\\  
&\leq C   \int_{\Omega} \left(\widetilde{W}(\nabla \varphi(x)) + 1  \right)
|\transpi{P(x)}|^{p'} \dd x 
\  \leq C  \
 \| \inve{P}\|_{L^\infty (\Omega;\R^{d\times d})}^{p'}  \left(\en_{2,\rpam} (t,P) +
1 \right).
\end{aligned}
\end{equation}\medskip

\noindent\textbf{Ad (3):}
 Let $P_1,\, P_2 \in \domain $ comply with \eqref{conditions-G}  and let   $\varphi_1\in M_\rpam (t,P_1)$. 
We have
\begin{equation}
\label{too-bad-1}
\begin{aligned}
&
\en_{2,\rpam} (t,P_2) - \en_{2,\rpam} (t,P_1)
\\
& \quad   \leq
\int_{\Omega} \left( W\argut{\varphi}{P_2} {-} W\argut{\varphi}{P_1} {-} \Back{\nabla\varphi}{P_1}(P_2{-}P_1) \right) \dd x
\\ & \quad 
\stackrel{(1)}{\leq} C_W \int_\Omega \left( W\argut{\varphi}{P_1}{+} 1 \right) |P_1{-}P_2|^2 \dd x 
\\
& \quad 
\stackrel{(2)}{\leq} C_W \| W\argut{\varphi}{P_1}{+} 1\|_{L^{p'}} \|P_1{-}P_2\|_{L^p} \|P_1{-}P_2\|_{L^\infty}
\\
&
 \quad 
\stackrel{(3)}{\leq} C \| \widetilde{W} (\nabla \varphi_1) {+} 1\|_{L^{1}}^{1/p'} \|P_1{-}P_2\|_{L^p}^{2-\theta} \|P_1{-}P_2\|_{W^{1,\qg}}^\theta
%
\quad \stackrel{(4)}{\leq} \ C \|P_1{-}P_2\|_{L^p}^{2-\theta}\,.
\end{aligned}
\end{equation}
Here, {\footnotesize (1)} is derived via estimate \eqref{key-Alex-2}
from Lemma \ref{l:estimates-pointwise} as follows. First we use
\eqref{conditions-G} giving the upper bounds 
\begin{equation}
\label{conse-of-conse}
 \|P_1\|_{W^{1,\qg}} + \|P_2\|_{W^{1,\qg}} \leq C_3^S
\end{equation}
by the analog of 
\eqref{conse-2}. Using $\qg>d$ the Gagliardo-Nirenberg inequality
\begin{equation}
\label{GN-now-used}
  \|P_1{-}P_2\|_{L^\infty} \leq C_{\mathrm{GN}} \|P_1{-}P_2\|_{W^{1,\qg}}^\theta  
  \|P_1{-}P_2\|_{L^p}^{1-\theta} \quad \text{with } \theta = 
  \frac{d\qg}{\qg p-dp+d\qg} \in (0,1) 
\end{equation}
provides smallness of $\|P_1{-}P_2\|_{L^\infty}$. Indeed, choosing
$r^* = \tfrac12 \left( \tilde{r} /(2C_3^S)^\theta
\right)^{1/(1{-}\theta)}$ with $\tilde r$ from \eqref{key-Alex-2}, we
find $\|P_1{-}P_2\|_{L^\infty} \leq (2C_3^S)^\theta (r^*)^{1-\theta}
<\tilde {r}$, such that \eqref{key-Alex-2} is applicable.

Estimate {\footnotesize (2)} is H\"older's
inequality, while to obtain {\footnotesize (3)} we use estimate
\eqref{w1-tilde}, observing that $\|P_1\|_{L^\infty} + \|
\inve{P}_1\|_{L^\infty} \leq C$ due to \eqref{conse-of-conse} and the
continuous embeddings $W^{1,\qg}(\Omega;\R^{d\times d}) \subset
L^\infty(\Omega;\R^{d\times d})$ and
$W^{1,\tilde{q}}(\Omega;\R^{d\times d}) \subset
L^\infty(\Omega;\R^{d\times d})$. We also again resort the
Gagliardo-Nirenberg inequality \eqref{GN-now-used}. Finally,
{\footnotesize (4)} ensues from the energy bound $S \geq
\mathcal{G}_\eta(P_1) \geq c \| \widetilde{W} (\nabla \varphi_1) {+}
1\|_{L^{1}} - C$ by the analog of \eqref{esti-2}, and again from the
bound \eqref{conse-of-conse}. We have thus established
\eqref{used-ch-rule-magari}. 
\end{proof}

With the very same argument as in the proofs of Lemmas
\ref{l:prelim4-5} and \ref{l:prelim5} it is possible to check
conditions \eqref{e:ass-p-a} and \eqref{eq:468},  which implies  the
variational sum rule \eqref{eq:42-bis}.  We now check the CRI
\eqref{eq:48strong}.

\begin{lemma}[Chain-rule inequality for the regularized energy $\en_\param>0$]
  \label{l:prelim6} Assume that \eqref{kappa2}--\eqref{kappa1},
  \eqref{loading}, \eqref{w1}--\eqref{w3},
  \eqref{index-1}--\eqref{index-2}, and
  \eqref{w2-tilde}--\eqref{w1-tilde} hold.  Then, the triple
  $(\en_\rpam, \diffname_\rpam, \Ptname_\rpam)$ fulfills the CRI
  \eqref{eq:48strong}.
\end{lemma}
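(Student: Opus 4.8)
The plan is to verify the Chain-Rule Inequality (CRI) \eqref{eq:48strong} for the regularized system $\grasyse{\param}$ by exploiting the two new estimates established in Lemma \ref{l:prelim4-reg}: the $L^{p'}$-bound \eqref{even-better2-eps} on the backstress and the uniform superdifferentiability \eqref{used-ch-rule-magari}. Fix a curve $P \in \AC([0,T];X)$ and $\Xi \in L^1(0,T;X^*)$ satisfying the conditions in \eqref{conditions-1} with $\en$ replaced by $\en_\param$. The first observation is that, because of the energy bound $\sup_t |\en_\param(t,P(t))| < \infty$ and coercivity, we have a uniform bound $\|P(t)\|_{W^{1,\qg}} + \|P^{-1}(t)\|_{L^\infty} \leq S$ for a.a.\ $t$; moreover from $\int_0^T \Psi_{P(t)}(\dot P(t))\,\rmd t < \infty$ together with \eqref{enhanced-coercivity-psi} we get $\dot P \in L^p(0,T;X)$, and from $\int_0^T \Psi^*_{P(t)}(-\Xi(t))\,\rmd t < \infty$ and \eqref{enhanced-coercivity-psi} we get $\Xi \in L^{p'}(0,T;X^*)$. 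Hence the duality pairing $\pairing{}{\amb}{\Xi(t)}{\dot P(t)}$ is well defined and integrable in time — this is precisely the point where the regularization is needed and where the argument of Remark \ref{rmk:failure} fails for the unregularized system.

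Next I would establish the absolute continuity of $t \mapsto \en_\param(t,P(t))$ and the chain-rule identity. I would split $\en_\param(t,P(t)) = \calE_1(P(t)) + \calE_{2,\param}(t,P(t))$. For $\calE_1(P(t)) = \overline{\en}_1(P(t)) + \int_\Omega K(P(t))\,\rmd x$: the $q_\mathsf{G}$-Dirichlet part is a convex $\rmC^1$ functional on $W^{1,\qg}$, so along $P \in W^{1,p}(0,T;L^p) \cap L^\infty(0,T;W^{1,\qg})$ its chain rule gives $\tfrac{\rmd}{\rmd t}\overline{\en}_1(P(t)) = \pairing{}{}{-\Delta_\qg P(t)}{\dot P(t)}$ (the pairing makes sense since $-\Delta_\qg P \in L^{p'}(0,T;X^*)$ by the comparison argument from Lemma \ref{l:prelim4-reg}), and the $K$-term is handled by the $\rmC^2$-regularity of $K$ and the $L^\infty$-bounds on $P, P^{-1}$. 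For the reduced part $\calE_{2,\param}(t,P(t))$, I would combine the time-Lipschitz estimate \eqref{eq:diffclass_a} (controlling the $t$-increment through $\partial_t \calI_2 = -\pairing{}{}{\dell(t)}{\varphi}$) with the superdifferentiability estimate \eqref{used-ch-rule-magari}: writing, for $s < t$ close,
\begin{align*}
\calE_{2,\param}(t,P(t)) - \calE_{2,\param}(s,P(s))
&\leq \big(\calE_{2,\param}(t,P(t)) - \calE_{2,\param}(t,P(s))\big) + \big(\calE_{2,\param}(t,P(s)) - \calE_{2,\param}(s,P(s))\big)\\
&\leq \int_\Omega \Backx{\cdot}{\nabla\varphi_s}{P(s)}:(P(t){-}P(s))\,\rmd x + C\|P(t){-}P(s)\|_{L^p}^{2-\theta} + C|t{-}s|(\mathcal G_\param(P(s)){+}1),
\end{align*}
where $\varphi_s \in \minse{\param}{t}{P(s)}$. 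Dividing by $t-s$, using $\|P(t)-P(s)\|_{L^p}^{2-\theta} = o(|t-s|)$ (since $P \in W^{1,p}(0,T;X) \subset \rmC^{0,1-1/p}$ so $\|P(t)-P(s)\|_{L^p} = O(|t-s|^{1-1/p})$, and $(2-\theta)(1-1/p) > 1$ once $\theta$ is small enough — this forces a compatibility check on $\theta$, or alternatively one uses $\|P(t)-P(s)\|_{L^p} \leq \int_s^t\|\dot P\|_{L^p}$ and absolute continuity of $r\mapsto \int_0^r\|\dot P\|_{L^p}$ to get the $o(|t-s|)$ directly) and passing to the limit, one obtains at a.a.\ $t$ that $\tfrac{\rmd}{\rmd t}\calE_{2,\param}(t,P(t)) \geq \pairing{}{}{\Backx{\cdot}{\nabla\varphi}{P(t)}}{\dot P(t)} + \partial_t\calI_2(t,\varphi,P(t))$ for any $\varphi \in \minse{\param}{t}{P(t)}$, and in particular $\geq \pairing{}{}{\Xi_B(t)}{\dot P(t)} + \Pt{t}{P(t)}{\Xi(t)}$ after choosing the minimizer realizing $\Xi(t) = \rmD_P\calI_\param(t,\varphi,P(t))$ and the sup in \eqref{def-P-hat}. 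Adding the $\calE_1$-identity and recalling $\Xi = -\Delta_\qg P + \rmD K(P) + \Xi_B$ yields the CRI \eqref{eq:48strong}. Absolute continuity of $t\mapsto \en_\param(t,P(t))$ follows because it is a difference of the $\calE_1$-part (absolutely continuous by the classical chain rules) and the lower-semicontinuous, time-Lipschitz $\calE_{2,\param}$-part, together with the two-sided estimates just derived; more precisely one shows the upper Dini derivative is dominated by an $L^1$ function and the map is of bounded variation, hence AC.

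The main obstacle I anticipate is the careful bookkeeping of the one-sided increment estimates at the non-smooth functional $\calE_{2,\param}$: one must ensure that the inequality \eqref{used-ch-rule-magari} (which is only an upper bound on $\calE_{2,\param}(t,P_2) - \calE_{2,\param}(t,P_1)$ in terms of the backstress evaluated at a minimizer for $P_1$) combines cleanly in both the $s\uparrow t$ and $s\downarrow t$ directions to pin down $\tfrac{\rmd}{\rmd t}\calE_{2,\param}(t,P(t))$ as an honest derivative rather than just controlling Dini derivatives from one side, and that the exceptional null set of $t$'s where the minimizer selection, the differentiability of $r\mapsto\int_0^r\|\dot P\|$, and the Lebesgue points of $\Xi$ all behave well can be taken common. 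A secondary subtlety is checking that the exponent $\theta$ from \eqref{GN-now-used} is genuinely small enough that $\|P(t)-P(s)\|_{L^p}^{2-\theta}$ is $o(|t-s|)$; this is automatic if one estimates the $L^p$-increment via $\int_s^t\|\dot P(r)\|_{L^p}\rmd r$ and invokes absolute continuity, so I would route the argument that way to avoid any exponent arithmetic. Everything else — the existence and weak compactness of minimizers, the marginal-subdifferential structure, the $L^{p'}$-integrability of all stress contributions — is already in place from Lemmas \ref{l:prelim1}, \ref{l:prelim4-reg}, so once the chain-rule identity is secured, Proposition \ref{prop:weak-to-en} and Corollary \ref{cor:abstract-1} deliver the EDB solutions and the energy-dissipation balance \eqref{concrete-enid-rpam}.
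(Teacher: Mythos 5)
Your overall strategy coincides with the paper's proof: the same splitting $\en_\rpam=\calE_1+\calE_{2,\rpam}$, the classical chain rule for the convex-plus-$\rmC^1$ part $\calE_1$, the increment splitting of $\calE_{2,\rpam}$ based on the two estimates \eqref{even-better2-eps} and \eqref{used-ch-rule-magari} of Lemma \ref{l:prelim4-reg}, and the preliminary observation that $\dot P\in L^p(0,T;X)$ and $\Xi\in L^{p'}(0,T;X^*)$ make the duality pairing meaningful (which is indeed the whole point of the regularization). Your treatment of the remainder, $\|P(t)-P(s)\|_{L^p}^{2-\theta}=o(|t-s|)$ via $\int_s^t\|\dot P\|_{L^p}\dd r$ at Lebesgue points, is sound and equivalent to the paper's use of the pointwise bound $\|P(t{+}h)-P(t)\|_{L^p}\le 2|h|\,\|\dot P(t)\|_{L^p}$ at points of differentiability, so you are right that no exponent arithmetic is needed.

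There is, however, one concrete directional issue that your displayed computation does not resolve and that you only flag as an anticipated obstacle. The estimate \eqref{used-ch-rule-magari} is a one-sided \emph{upper} bound on $\en_{2,\rpam}(t,P_2)-\en_{2,\rpam}(t,P_1)$ in terms of the backstress at a minimizer for $P_1$. In your display you anchor it at $P(s)$ and divide by $t-s>0$; letting $s\uparrow t$ then produces an \emph{upper} bound on $\frac{\rmd}{\rmd t}\calE_{2,\rpam}(t,P(t))$ in terms of the backstress at \emph{some} weak limit of the minimizers $\varphi_s$ --- the wrong inequality for the CRI, and with no control over which minimizer (hence which $\Xi(t)$) appears. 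The correct orientation, which is what the paper carries out, is to anchor \eqref{used-ch-rule-magari} at $(t,P(t))$ with $\tilde\varphi\in\Rtname_\rpam(t,P(t),\Xi(t))$ (so the backstress is exactly the one entering the given $\Xi(t)$, and all elements of $\Rtname_\rpam(t,P(t),\Xi(t))$ yield the same backstress), take the increment to $t+h$ with $h<0$, and divide by the \emph{negative} $h$, which flips the inequality and gives the lower bound $\frac{\rmd}{\rmd t}\calE_{2,\rpam}(t,P(t))\ge \int_\Omega \Back{\nabla\tilde\varphi}{P(t)}{:}\dot P(t)\dd x+\Ptname_\rpam(t,P(t),\Xi(t))$; the identification of this one-sided limit with the honest derivative is legitimate because the absolute continuity of $t\mapsto\en_{2,\rpam}(t,P(t))$ has been secured beforehand from the two-sided estimate you also derive. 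Once this reorientation is made, the rest of your argument goes through and matches the paper's proof.
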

\begin{proof} Let us fix a curve $ P\in \AC ([0,T];L^p(\Omega;\R^{d\times
  d}))$ and a function $\Xi \in L^1 (0,T;L^{p'}(\Omega;\R^{d\times
  d}))$ fulfilling \eqref{conditions-1}.  Taking into account
\eqref{esti-2}, \eqref{conse-2}, and \eqref{enhanced-coercivity-psi},
we have a fortiori that
\begin{equation}
\label{additional-regu}
\begin{aligned}
&
   P \in L^\infty (0,T;
W^{1,{\qg}}(\Omega;\R^{d\times d})) \cap W^{1,p}(0,T; L^p
(\Omega;\R^{d\times d})), 
\\
 &   \inve{P} \in L^\infty (0,T;
W^{1,{\tilde{q}}}(\Omega;\R^{d\times d})), \\  &   \Xi \in L^{p'}
(0,T;L^{p'}(\Omega;\R^{d\times d}))
\end{aligned}
\end{equation}
with 
\begin{equation}
\label{selection-xi} \Xi(t) = -\Delta_{\qg} P(t) + \rmD K(P(t)) +
\Back{\nabla\varphi(t)}{P(t)}  \quad \foraa   t \in (0,T).
\end{equation}
Here, $t \mapsto \phimin(t) \in \minse \rpam{t}{P(t)}$ is a measurable
selection.  Each of the three terms on the right-hand side of
\eqref{selection-xi} is in $L^{p'} (0,T;L^{p'}(\Omega;\R^{d\times d}))
$ and in fact we have
\begin{equation}
\label{quoted-later}
\begin{aligned}
\| {-}\Delta_{\qg} P\|_{L^{p'}(0,T;L^{p'}(\Omega;\R^{d\times d}))} &  + \|
\rmD K(P) \|_{L^\infty(0,T;L^\infty(\Omega;\R^{d\times d}))}\\ &  +
\| \Back{\nabla\varphi}{P}   \|_{L^\infty(0,T;L^{p'} 
  (\Omega;\R^{d\times d}))} \leq C. 
\end{aligned}
\end{equation}
The estimate for $\Back{\nabla\varphi}{P} $ follows from
\eqref{even-better2-eps} using $\sup_{t \in [0,T]} \en_\rpam
(t,P(t))<\infty$. The bound for $\rmD K(P)$ is due to the fact that
 \begin{equation}
\label{continuity}
P\in \mathrm{C}^0 ([0,T];\mathrm{C}^0
(\overline{\Omega};\R^{d\times d})),  \text{ which in particular
  implies } \det(P(t,x)) \geq \pi >0, 
\end{equation}
(where we have used the compact embedding
$W^{1,{\qg}}(\Omega;\R^{d\times d})\Subset \mathrm{C}^0
(\overline{\Omega};\R^{d\times d}) $), combined with the fact
that $\rmD K$ is continuous on $\GLD$.  Then, the estimate for
$-\Delta_{\qg} P$ ensues from a comparison argument, using $\Xi \in
L^{p'}(0,T;L^{p'}(\Omega;\R^{d\times d}))$.

Now, the chain rule for $\en^1$, given by the sum of a convex and of a
Fr\'echet differentiable functional, yields that
\begin{equation}
\label{ch-rule-e1}
\begin{aligned}
&
\text{the map } t \mapsto \en^1 (t,P(t)) \text{ is absolutely
  continuous and } 
\\
& \frac{\dd}{\dd t }\en^1(t,P(t)) = \int_\Omega (-\Delta_{\qg} P(t) +
\rmD K(P(t))  ){:} \dot P(t) \dd x \quad \foraa  t \in (0,T). 
\end{aligned}
\end{equation}
Let us now prove that the map $t \mapsto \en_{2,\rpam} (t,P(t))$ is
absolutely continuous, by estimating the difference $| \en_{2,\rpam}
(t, P(t)) - \en_{2,\rpam} (s, P(s))|$. We will use that
$\sup_{t\in [0,T]} \mathcal{G}_\rpam (P(t)) \leq S$.  Without loss of
generality, we may also suppose that $\|P(t) {-} P(s)\|_{L^p} \leq
r^*$ with $r^*>0$ from \eqref{conditions-G}.  We have
\begin{equation}
\label{later-4-diff-quot}
\begin{aligned}
 &  \en_{2,\rpam} (t, P(t))  - \en_{2,\rpam} (s, P(s))
\\ &  =  \en_{2,\rpam} (t, P(t))  - \en_{2,\rpam} (t, P(s))+  \en_{2,\rpam} (t, P(s))  - \en_{2,\rpam} (s, P(s))
 \\ &
  \leq \en_{2,\rpam}(t, P(t))  - \en_{2,\rpam} (t, P(s)) + \calI_2 (t,\varphi(s), P(s))  - \calI_2 (s,\varphi(s), P(s))
 \\ &
  \stackrel{(1)}{\leq}
  \int_\Omega   \Back{\nabla\varphi(s)}{P(s)}  {:}(P(t){-}P(s))   \dd
  x   
  \\
  & \qquad
  +   C_2^S \| P(t)  {-} P(s)\|_{L^p\OOdd}^{2-\theta}   -\!
  \pairing{}{W^{1,\qphi}\OOd}{\ell(t){-}\ell(s)}{\varphi(s)},  
\end{aligned}
\end{equation}
where  $\varphi(s)$ is a selection in  $\minse\rpam{s}{P(s)}$, and 
for {\footnotesize (1)}
we have used
estimate \eqref{used-ch-rule-magari}. 
 Exchanging the
role of $s$ and $t$,
 we thus conclude, for every $0 \leq s \leq t
\leq T$, the estimate 
\begin{equation}
\label{2abscont}
\begin{aligned}
 & |\en_{2,\rpam} (t, P(t))  - \en_{2,\rpam} (s, P(s)) |  \\ &
  \leq  \|P(t) {-} P(s) \|_{L^p \OOdd}
 \left(  C_2^S  \|P(t) {-} P(s) \|_{L^p \OOdd}^{1-\theta}  +
\sup_{t \in [0,T]} \|    \Back{\nabla\varphi(t)}{P(t)}  \|_{L^{p'}\OOdd } \right)
\\ & \quad
+ \| \ell(t) {-}\ell(s) \|_{W^{1,\qphi}(\Omega;\R^d)^*}  (\|
\varphi(t)\|_{W^{1,\qphi}\OOd} + 
\| \varphi(s)\|_{W^{1,\qphi}\OOd} )
\\ &
\stackrel{(2)}{\leq} C \left( \|P(t) {-} P(s) \|_{L^{p}\OOdd} + |t{-}s| \right)\,.
\end{aligned}
\end{equation}
Indeed, for {\footnotesize (2)} we have used condition \eqref{loading}
on $\ell$, the coercivity property \eqref{esti-1} for $\calI_2$, and
combined \eqref{even-better2-eps} with estimate \eqref{conse-2}, so
that
\[
  \sup_{t \in [0,T]} \|    \Back{\nabla\varphi(t)}{P(t)}
  \|_{L^{p'}\OOdd } \leq \tilde{c}_7 \sup_{t \in [0,T]}  \|
  \inve{P(t)}\|_{L^\infty} ( \calG_\eta(P(t)) {+} 1) \leq C. 
\]
Thus, from \eqref{2abscont} and the fact that $P \in \AC
([0,T];L^p(\Omega;\R^{d\times d}))$ we infer that $t \mapsto
\en_{2,\rpam} (t,P(t))$ is absolutely continuous on $[0,T]$.

In order to prove the CRI \eqref{eq:48strong},
let us fix $t \in (0,T)$ outside a negligible set such that formula
\eqref{ch-rule-e1} holds and that for $h\down 0$ we have 
\begin{equation}
\label{choice-of-t}  \left\{
\begin{array}{ll}
 \exists \, \frac{\rmd}{\rmd t}\mathcal{E}_{2,\rpam}(t, P(t)),
 \\[0.4em]
 \exists\, \dot P(t)  \text{ and }  \frac1h (P(t{+}h) {-}P(t)) \to
 \dot P(t) \text{  in } L^p ( \Omega;\R^{d\times d}), 
 \\[0.4em]
 \exists \, \dell(t) \text{ and }  \frac1h (\ell(t{+}h) {-}\ell(t)) \to
 \dell(t) \text{ in } W^{1,\qphi} (\Omega;\R^{d})^* .
\end{array}
\right.
\end{equation}
We now use estimate \eqref{later-4-diff-quot} for $\tilde\varphi
\in \Rtname_\rpam (t,P(t), \Xi (t))$, divide by $h<0$, and obtain
\[
\begin{aligned}
 & \frac{1}h \left( \en_{2,\rpam} (t{+}h, P(t{+}h)) - \en_{2,\rpam}(t,
   P(t))\right) 
\\
& \geq  \frac1h  \int_\Omega    \Back{\nabla\tilde{\varphi}(t)}{P(t)}  {:} (P(t{+}h)
 {-} P(t)) \dd x +C_2^S \frac{\|P(t{+}h) {-} P(t)
  \|_{L^p \OOdd}^{2-\theta} }{h}
  \\
  & \qquad  -\frac1h \!\pairing{}{W^{1,\qphi}\OOd}{ 
   \ell(t{+}h){-}\ell(t)}{\tilde\varphi}.
   \end{aligned}
\]
Taking the limit as $h \up 0$ in the above inequality, we observe that
the second term on the right-hand side converges to $0$ by
the second of \eqref{choice-of-t} giving  $\| P(t{+}h){-}P(t)\|\leq 2
  \|h\dot P(t)\|$ for small $h$ and by $2{-}\theta>1$.  
Thus, for all $\tilde\varphi \in \Rtname_\rpam (t,P(t), \Xi (t))$
we have
\[
\frac{\rmd}{\rmd t}\mathcal{E}_{2,\rpam} (t, P(t)) \geq  \int_\Omega
 \Back{\nabla\tilde{\varphi}}{P(t)} {:} \dot P(t)   \dd x  
- \pairing{}{W^{1,\qphi}\OOd}{\dell(t)}{\tilde\varphi}.
\]
It follows from the definition of $ \Rtname_\rpam (t,P(t), \Xi (t))$ that
\[
\begin{aligned}
\Back{\nabla\tilde{\varphi}_1}{P(t)}   =
 \Back{\nabla\tilde{\varphi}_2}{P(t)}  \quad \text{ for all } \tilde\varphi_1, \,
\tilde\varphi_2 \in \Rtname_\rpam (t,P(t), \Xi (t)).
\end{aligned}
\]
Therefore, we have
\begin{equation}
\label{ch-rule-e2}
\frac{\rmd}{\rmd t}\mathcal{E}_{2,\rpam} (t, P(t)) \geq  \int_\Omega
\Back{\nabla\varphi(t)}{P(t)}  {:} \dot P(t)   \dd x  
+   \Ptname_\rpam  (t,P(t),\Xi(t)),   
\end{equation}
for every selection $t \in (0,T) \mapsto \varphi(t) \in \Rtname_\rpam
(t,P(t), \Xi (t))$.  Combining \eqref{ch-rule-e1} and
\eqref{ch-rule-e2} yields the CRI \eqref{eq:48strong}.
\end{proof}

\begin{remark}\slshape
\label{rmk:YM}
It is natural to wonder whether EDB solutions to the doubly nonlinear
evolution equation \eqref{regularized-dne} converge to a solution of
the generalized gradient system driven by the original energy $\calE$
from \eqref{reduced-energy}, as the regularizing parameter $\rpam$
vanishes.  This is far from guaranteed.  Indeed, mimicking the
variational arguments that we have employed for passing to the
time-continuous limit in the proof of Theorem \ref{thm:abstract-2}, it
should be possible to show that EDB solutions to
\eqref{regularized-dne} converge to a solution of the generalized
gradient system driven by the energy functional
$ 
\widetilde{\calE}(t,P):= \inf\{ \calI_{0}(t,\varphi,P)\, : \, \varphi \in \widetilde{\calF}\},
$ 
with $ \calI_{0}(t,\cdot,P)$ the $\Gamma$-limit as $\eta\down 0$ of $ \calI_{\eta}(t,\cdot,P)$
and $ \widetilde{\calF}$ 
 the set  of admissible deformations from \eqref{calF-reg}. 
Note that $\widetilde{\calE}$ need not coincide with $\calE$, as there might be a Lavrentiev phenomenon. 
\end{remark}

\section{Extensions}
\label{s:7}
In this section we briefly discuss directions  in which our analysis
could be extended.

First, as in \cite{MaiMie09GERI} we might easily couple the
evolution of the plastic variable $P$ with the (rate-dependent)
evolution of some other hardening variable $p \in \R^m$, with $m \geq
1$.

Second, as outlined in \cite{MaiMie09GERI}, it would be
possible to encompass in our model also a global version of the
non-self-interpenetration condition, as proposed in
\cite{Ciar82IANO}. For this, it would be sufficient to replace the
space of admissible deformations $\calF$
 by 
\begin{equation}
\label{ciarlet-necas}
\begin{aligned}
\calF_{\mathrm{nsi}} = \{ \varphi \in W^{1,\qphi}(\Omega;\R^d)\, : \ & \varphi  = \varphi_{\mathrm{Dir}}  \text{ on }\Gamma_{\mathrm{Dir}},  \  \det \nabla \varphi \geq 0 \text{ a.e.\ in } \Omega, \\
& \int_\Omega \det (\nabla \varphi) \dd x \leq \mathrm{vol}(y(\Omega))  \}.
\end{aligned}
\end{equation}
Since $\calF_{\mathrm{nsi}}$ is weakly closed in
$W^{1,\qphi}(\Omega;\R^d)$ if $\qphi>d$, it is possible to define our
reduced energy by minimizing out from the stored energy the
deformations in $\calF_{\mathrm{nsi}} $. Then, all our results carry
over to this case.\medskip
 
In what follows, we will focus more specifically on the extension to
time-dependent Dirichlet loadings.
%
 To replace the time-independent  Dirichlet condition
$\varphi(t,x) = \varphi_{\mathrm{Dir}}(x)$  for $(t,x) \in [0,T] \times \Gamma_{\mathrm{Dir}}$  by 
\begin{equation}
\label{gdir-cond}
\varphi(t,x) = \gdir (t,x) \quad (t,x) \in [0,T] \times \Gamma_{\mathrm{Dir}},
\end{equation}
with $\gdir :[0,T] \times \Gamma_\dir \to \R^d$ given, we  follow the ideas from
\cite[Sec.\ 5]{FraMie06ERCR}. 

For this,  we will suppose that $\gdir$ can be extended to $[0,T]\times \R^d$ and search for the deformation $\varphi :[0,T]\times \Omega \to \R^d$ in the form of 
  the  composition
 \begin{equation}
 \label{multiplic-split}
 \varphi(t,x) = \gdir (t, y(t,x)) \quad \text{with } y(t,\cdot) \in \calfid:= \{ y \in W^{1,\qphi}(\Omega;\R^d)\, :  \ y =\mathrm{Id} \text{ on } \Gamma_\dir \}.
 \end{equation}
In fact, underlying \eqref{multiplic-split} is the implicit idea  that $\gdir (t,\cdot) : \R^d \to \R^d$ is a diffeomorphism. Therefore, as in 
 \cite[eqn.\ (5.7)]{FraMie06ERCR} we  require that $\gdir$  satisfies 
 \begin{equation}
 \label{conds-g}
 \begin{aligned}
 &
 \gdir \in \rmC^1 ([0,T]\times \R^d;\R^d), \quad \nabla \gdir \in \mathrm{BC}^1  ([0,T]\times \R^d;\R^{d\times d} ), \\ 
 &
 \exists\, C_9>0 \ \  \forall\, (t,x) \in [0,T]\times \R^d\,:  \ \ 
 \ |\nabla \gdir (t,x)^{-1}| \leq C_9,
 \end{aligned}
\end{equation} 
where $\mathrm{BC}$ stands for \emph{bounded continuous}. 
 
We now rewrite the stored and the reduced energy functionals in terms
of the variable $y$, taking into account that the composition
\eqref{multiplic-split} leads to the following multiplicative split
for the deformation gradient
\begin{equation}
 \label{split}
 \nabla \varphi(t,x) = \nabla \gdir (t,y(t,x)) \nabla y(t,x) \quad
 \text{for every } (t,x) \in [0,T] \times \Omega. 
\end{equation}
With a slight abuse of notation, we will continue to use the symbols
$\calI$ and $\calE$ for
\begin{align}
 &\label{stored-y}
 \begin{aligned}
 & \calI(t,y,P):= \en_1(P) + \calI_2(t,\varphi, P) \quad \text{with }
 \\
 & \quad 
  \calI_2(t,\varphi, P):  = \left\{
  \begin{array}{lll}
  \begin{array}{@{}ll}
  \int_{\Omega} W(x,\nabla \gdir(t, y)\nabla y  \inve{P})   \dd x  
  \\
  \qquad \quad -
  \pairing{}{W^{1,\qphi}\OOd}{\ell(t)}{\gdir(t,y)}
  \end{array}
   & \text{if $(y,P) \in \calfid \times \calP$,}
  \\
  \infty  & \text{otherwise,}
  \end{array} \right.
 \end{aligned}
\\
& \label{reduced-y}
\begin{aligned}
&
\ene tP:= \inf\{ \calI(t,y,P)\, : \ y \in \calfid\} = \en_1(P) +
\en_2(t,P) 
\\
&
\quad \quad \text{with } \en_2(t,P):=  \inf\{ \calI_2(t,y,P)\, :
\ y \in \calfid\}. 
\end{aligned}
\end{align} 
Clearly, the marginal subdifferential of $\en$ is now given by
\begin{equation}
 \label{diff-dir}
  \diff tP :=  -\Delta_{\qg} P +
 \rmD K(P) +\left\{ \Backx{\cdot}{\nabla \gdir(t,y)\nabla y}P \, : \ y
   \in \mins tP\right\}, 
\end{equation}
where we still use the notation $\mins tP$ for $\mathrm{Argmin}_{y
  \in\calfid} \calI (t,y,P)$.
 
In what follows, we will discuss the conditions the gradient system
$\grasys$, with $X = L^p(\Omega;\R^{d\times d})$, needed to obtain the
abstract conditions \eqref{basic-1}--\eqref{eq:468}.  The lower
semicontinuity and coercivity \eqref{basic-1} and \eqref{eq:17-bis}
can be checked by suitably adapting the proofs of Lemmas
\ref{l:prelim1} and \ref{l:prelim2}, cf.\ also the arguments in the
proofs of \cite[Lemma 5.5]{FraMie06ERCR}.  We now closely examine the
time dependence of $\en$ and the properties of the power function
$\Ptname$.

\paragraph*{The power of the external loadings}
In order to check conditions \eqref{eq:diffclass_a} and
\eqref{e:ass-p-a}, we will resort to the explicit calculation of the
power $\partial_t \calI(t,y,P)$ from \cite[Lemma 5.5]{FraMie06ERCR}.
Therein, it was proved that, in the case of zero volume and surface
loadings (i.e.\ for $\ell =0$)
\begin{equation}
 \label{power-formula-maimie}
 \partial_t \calI (t,y,P) = \partial_t \calI_2(t,y,P) = \int_\Omega
 \inpow{x}{\gdir}{y(x)}{ P(x)^{-1}} : V(t,y(x)) \dd x, 
\end{equation}
where $\Kirchname$ denotes the (multiplicative) \emph{Kirchhoff stress
  tensor}
\begin{equation}
 \label{Kirch}
 \Kirchx {x}F := \partial_F W(x,F) \transp{F}
\end{equation}
and we have used the short-hand notation $V(t,y);= \nabla \dot{g}_\dir (t,y) (\nabla \gdir
(t,y))^{-1} $.  Formula
\eqref{power-formula-maimie} was established under stress control
conditions for $\Kirchname$ of the same type as the ones in
\eqref{w3}, namely
\begin{equation}
\label{w3-K}
\tag{6.$\mathrm{W}_{3}$}
\begin{aligned}
& \exists\, \delta>0 \    \ \exists\, \widetilde{C}_4,\, \widetilde{C}_5>0    \ \  \forall\,
(x,F) \in \dom(W) \ \forall\, N \in \mathcal{N}_\delta\,:
\\
&   \text{(i)} \  \text{$W(x,\cdot) : \mathrm{GL}^+(d) \to \R$ is differentiable,} 
\\
& \text{(ii)} \  |  \partial_F W(x,F) \transp{F}  | \leq  \widetilde{C}_4 (W(x,F) +1),
\\
& \text{(iii)} \ | \rmD_F W(x,F)\transp{F} -  \rmD_F
W(x,N F) \transp{(NF)} | \leq  \widetilde{C}_5 |N -\mathbf{1}| (W(x,F) +1).
\end{aligned}
\end{equation}

Therefore, we have the following explicit formula and properties for
$\partial_t \calI$.

\begin{lemma}
  Assume \eqref{loading}, that $W$ is frame-indifferent and that it
  fulfills \eqref{w1}--\eqref{w3} as well as \eqref{w3-K}, and let
  $\gdir$ comply with \eqref{conds-g}. Then, for every $(y,P) \in
  \calfid \times \domain$ we have 
 \begin{equation}
 \label{formula-power}
 \begin{aligned}
   \partial_t \calI (t,y,P) & = \int_\Omega
   \inpow{x}{\gdir}{y(x)}{P(x)^{-1}} : V(t,y(x)) \dd x
   \\
   & \quad - \pairing{}{W^{1,\qphi}\OOd}{\dot{\ell}(t)}{\gdir (t,y)} -
   \pairing{}{W^{1,\qphi}\OOd}{\ell (t)}{\dot{g}_\dir (t,y)}
  \end{aligned}
\end{equation}
and there exist $c_9,\, c_{10}>0$ and a modulus of continuity $\omega$
such that for every $s,\, t \in [0,T]$
 \begin{align}
 & 
 \label{est-mod-der}
 |\partial_t \calI(t,y,P)| \leq c_9 (\calI (t,y,P) + 1),
 \\
 & 
 \label{est-diff-der}
 |\partial_t \calI(t,y,P) - \partial_t \calI(s,y,P)  | \leq \omega
 (|t{-}s|) (\calI (t,y,P) + c_{10}). 
 \end{align}
Hence,  for every $s, t \in [0,T] $ and every $ (y,P)
 \in \calfid \times \domain$ there holds 
\begin{equation}
 \label{simple-Gronwall-argument}
 \calI(t,y,P) + 1 \leq \exp( c_9 |t{-}s|) \left( \calI(s,y,P) + 1 \right)\,.  
\end{equation}
\end{lemma}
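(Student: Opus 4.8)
The plan is to establish the representation \eqref{formula-power} first, and then to read the two estimates \eqref{est-mod-der}--\eqref{est-diff-der} off it, deducing \eqref{simple-Gronwall-argument} by Gronwall's lemma. Since $\en_1(P)$ does not depend on $t$, we have $\partial_t\calI(t,y,P)=\partial_t\calI_2(t,y,P)$, and we may assume $\calI(t,y,P)<\infty$, the inequalities being trivial otherwise. Writing $F_t:=\nabla\gdir(t,y)\nabla y\inve P$ and $V(t,z):=\nabla\dot g_\dir(t,z)(\nabla\gdir(t,z))^{-1}$, the chain rule gives, pointwise in $x$, $\partial_t W(x,F_t)=\rmD_F W(x,F_t):\bigl(\nabla\dot g_\dir(t,y)\nabla y\inve P\bigr)=\rmD_F W(x,F_t):\bigl(V(t,y)F_t\bigr)$, and by the identity \eqref{useful-inner} together with the definition \eqref{Kirch} of the Kirchhoff tensor this equals $\Kirchx{x}{F_t}:V(t,y)=\inpow{x}{\gdir}{y(x)}{P(x)^{-1}}:V(t,y(x))$. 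Combining this with the product rule for $\partial_t\pairing{}{W^{1,\qphi}}{\ell(t)}{\gdir(t,y)}$ --- legitimate because $\ell\in\rmC^1$ by \eqref{loading}, while $\qphi>d$ forces $y\in\rmC^0(\overline{\Omega};\R^d)$ so that $t\mapsto\gdir(t,y)$ is of class $\rmC^1([0,T];W^{1,\qphi}(\Omega;\R^d))$ with derivative $\dot g_\dir(t,y)$ thanks to \eqref{conds-g} --- then yields \eqref{formula-power}.

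The step I expect to be the main obstacle is justifying the exchange of $\partial_t$ with $\int_\Omega$, since $W$ is not assumed to satisfy any upper growth bound. Here I would exploit the multiplicative structure hidden in \eqref{multiplic-split}: for $|h|$ small one has $\nabla\gdir(t{+}h,y(x))=N_h(x)\,\nabla\gdir(t,y(x))$ with $N_h(x):=\nabla\gdir(t{+}h,y(x))\,(\nabla\gdir(t,y(x)))^{-1}$, and \eqref{conds-g} yields $|N_h(x)-\mathbf 1|\le C|h|$ uniformly in $x$; hence $F_{t+h}(x)=N_h(x)F_t(x)$. Interpolating $N_\sigma$ between $\mathbf 1$ and $N_h$ and invoking the Kirchhoff stress control \eqref{w3-K}(ii)--(iii), the scalar map $\sigma\mapsto W(x,N_\sigma F_t(x))+1$ satisfies a Gronwall-type differential inequality, giving the two-sided comparison $W(x,F_{t+h}(x))+1\le\mathrm{e}^{C|h|}\bigl(W(x,F_t(x))+1\bigr)$; the same computation bounds the difference quotients $h^{-1}\bigl(W(x,F_{t+h})-W(x,F_t)\bigr)$ by $C\bigl(W(x,F_t)+1\bigr)$ for $|h|$ small, an integrable majorant since $W(\cdot,F_t)\in L^1(\Omega)$. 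Dominated convergence then delivers \eqref{formula-power}, and the same majorant gives continuity of $t\mapsto\partial_t\calI(t,y,P)$, so that $t\mapsto\calI(t,y,P)$ is $\rmC^1$ on $[0,T]$.

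For \eqref{est-mod-der} I would estimate the right-hand side of \eqref{formula-power} term by term: the Kirchhoff term is bounded by $C\int_\Omega\bigl(W(x,F_t)+1\bigr)\dd x$ via \eqref{w3-K}(ii) and $|V(t,y)|\le C_9$; rewriting $\int_\Omega W(x,F_t)\dd x=\calI(t,y,P)-\en_1(P)+\pairing{}{W^{1,\qphi}}{\ell(t)}{\gdir(t,y)}+|\Omega|$, I would absorb the loading contribution using the coercivity of $\calI_2$ in the $y$-variable (the analog of Lemma~\ref{l:prelim1}, obtained by the H\"older argument of \eqref{4.1-1}--\eqref{4.1-2} together with the bound $|(\nabla\gdir)^{-1}|\le C_9$ from \eqref{conds-g}), the lower bound $\en_1(P)\ge C_1\|P\|_{L^\qp}^\qp-C$, and Young's inequality, which together give $\|y\|_{W^{1,\qphi}}^{\qphi}+\int_\Omega W(x,F_t)\dd x\le C(\calI(t,y,P)+1)$; the two loading terms in \eqref{formula-power} are then controlled by $C(\calI(t,y,P)+1)$ as well, using in addition $\|\dot g_\dir(t,y)\|_{W^{1,\qphi}}\le C(1+\|y\|_{W^{1,\qphi}})$, which holds since \eqref{conds-g} forces $\dot g_\dir(t,\cdot)$ to grow at most linearly and $\nabla\dot g_\dir$ to be bounded. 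For \eqref{est-diff-der} I would subtract \eqref{formula-power} at times $t$ and $s$ and write $F_t=N_{t,s}F_s$ with $|N_{t,s}-\mathbf 1|\le C|t-s|$: the Kirchhoff differences $\Kirchx{x}{F_t}-\Kirchx{x}{F_s}$ are handled by \eqref{w3-K}(iii) and the comparison $W(x,F_t)+1\le C(W(x,F_s)+1)$ from the previous step, while the increments of $V(\cdot,z)$, $\gdir(\cdot,z)$, $\dot g_\dir(\cdot,z)$ and $\ell(\cdot)$ are controlled by their uniform continuity on $[0,T]$ granted by \eqref{conds-g} and \eqref{loading}; packaging all of this into a single modulus of continuity $\omega$ --- the regime of large $|t-s|$ being absorbed by \eqref{est-mod-der} --- yields \eqref{est-diff-der}. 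Finally, since $t\mapsto\calI(t,y,P)$ is $\rmC^1$ with $\bigl|\tfrac{\rmd}{\rmd t}\calI(t,y,P)\bigr|\le c_9\bigl(\calI(t,y,P)+1\bigr)$ by \eqref{est-mod-der}, Gronwall's lemma immediately gives \eqref{simple-Gronwall-argument}.
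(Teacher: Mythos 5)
Your proposal is correct and follows essentially the same route as the paper, whose proof simply cites \cite[Thm.\,5.3]{MaiMie09GERI} for the case $\ell=0$ --- where \eqref{est-mod-der}--\eqref{est-diff-der} are obtained by exactly the mechanism you reconstruct, namely the multiplicative perturbation $F_{t+h}=N_hF_t$ with $|N_h-\mathbf 1|\le C|h|$ from \eqref{conds-g} combined with the Kirchhoff stress control \eqref{w3-K} and a Gronwall argument in the interpolation parameter --- and then remarks that the extension to $\ell\neq 0$ and the final estimate \eqref{simple-Gronwall-argument} are straightforward. Your write-up supplies in full the details the paper outsources (the dominated-convergence justification of \eqref{formula-power}, the coercivity-based absorption of the loading terms for \eqref{est-mod-der}, and the treatment of the $\ell$-terms), and is consistent with the cited argument.
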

\begin{proof} In the case $\ell =0$, i.e.\ when $\partial_t \calI$ is given
by \eqref{power-formula-maimie}, formulae \eqref{est-mod-der} and
\eqref{est-diff-der} were proved in \cite[Thm.\ 5.3]{MaiMie09GERI}.
In view of conditions \eqref{loading}, \eqref{w3-K}, and \eqref{conds-g}, It is easy
to check that they extend to the case when \eqref{formula-power}
holds. Hence, \eqref{simple-Gronwall-argument} follows via a simple
Gronwall argument.  
\end{proof}

Then, the time-dependence estimate \eqref{eq:diffclass_a} is an
immediate consequence of \eqref{simple-Gronwall-argument}, whereas
\eqref{e:ass-p-a} can be checked straightforwardly, also resorting to
\eqref{w3-K}.

\begin{corollary}
\label{cor:time-dep-Dir}
Assume \eqref{loading}, that $W$ is frame-indifferent and fulfills
\eqref{w1}--\eqref{w3}  and let $\gdir$ comply with
\eqref{conds-g}. Then, $\en$ from \eqref{reduced-y} complies with
\eqref{eq:diffclass_a} and \eqref{e:ass-p-a}.
\end{corollary}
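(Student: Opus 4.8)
The plan is to read off both \eqref{eq:diffclass_a} and \eqref{e:ass-p-a} directly from the preceding Lemma, since each of these conditions only constrains the map $t\mapsto\ene tP$ with $P\in\domain$ frozen. Concretely, I would use the explicit power formula \eqref{formula-power}, the growth/continuity bounds \eqref{est-mod-der}--\eqref{est-diff-der} for $\partial_t\calI$, and the Gronwall estimate \eqref{simple-Gronwall-argument}, and then essentially repeat the reasoning of Lemmas \ref{l:prelim2} and \ref{l:prelim4-5}. The only genuine novelty is that $\partial_t\calI$ is now the Kirchhoff-stress functional in \eqref{formula-power} rather than a loading term, so the weak-closedness arguments must be run with the Kirchhoff stress control \eqref{w3-K} in the role that \eqref{w3} played before.

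For \eqref{eq:diffclass_a} I would fix $P\in\domain$ and $s,t\in[0,T]$, pick a minimizer $y_s\in\mins sP$ and use it as a competitor for $\ene tP$: then \eqref{simple-Gronwall-argument} gives $\ene tP+1\le\calI(t,y_s,P)+1\le\exp(c_9|t-s|)(\calI(s,y_s,P)+1)=\exp(c_9|t-s|)(\ene sP+1)$. Exchanging $s$ and $t$ and iterating over a fine partition of $[0,T]$ — precisely the Gronwall bootstrap that upgrades \eqref{eq:diffclass_a} to the two-sided bound \eqref{energy-used-later} — first yields $\sup_{t\in[0,T]}\ene tP\le C\,\cg P$ and then, fed back into the displayed inequality, the Lipschitz estimate $|\ene tP-\ene sP|\le K_1\,\cg P\,|t-s|$.

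For \eqref{e:ass-p-a} I would take for $\Ptname$ the marginal power functional on $\mathrm{graph}(\diffname)$ defined, in analogy with \eqref{def-P-hat}, by $\Pt tP\Xi:=\sup\{\partial_t\calI(t,y,P):y\in\Rt tP\Xi\}$ with $\partial_t\calI$ from \eqref{formula-power}. The Borel measurability of $\Ptname$ and the fact that the supremum is a maximum, attained at some $\bar y\in\Rt tP\Xi$, would follow — exactly as in Lemmas \ref{l:prelim4-5} and \ref{l:prelim5} — from the weak sequential compactness of $\mins tP$ in $W^{1,\qphi}(\Omega;\R^d)$ together with the weak-$L^1$ continuity of the Kirchhoff stress $\Kirch{\nabla\gdir(t,y)\nabla y\inve{P}}$ along minimizers, which is where \eqref{w3-K} enters. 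Once $\Ptname$ is in place, the chain in \eqref{e:ass-p-a} is routine: the bound $\Pt tP\Xi\le K_2\cg P$ is \eqref{est-mod-der} combined with the energy bound just obtained, and, using the maximizer $\bar y\in\mins tP$ as a competitor on both sides so that $\ene tP=\calI(t,\bar y,P)$ while $\ene{t\pm h}P\le\calI(t{\pm}h,\bar y,P)$, one gets for $h>0$
\[
\frac{\ene{t+h}P-\ene tP}{h}\le\frac1h\int_t^{t+h}\partial_r\calI(r,\bar y,P)\dd r,\qquad\frac{\ene tP-\ene{t-h}P}{h}\ge\frac1h\int_{t-h}^{t}\partial_r\calI(r,\bar y,P)\dd r;
\]
by \eqref{est-diff-der} the integrand $r\mapsto\partial_r\calI(r,\bar y,P)$ is continuous, so both averages tend to $\partial_t\calI(t,\bar y,P)=\Pt tP\Xi$ as $h\downarrow0$, which yields the two inner inequalities of \eqref{e:ass-p-a}; the existence of the one-sided limit follows from the semiconcavity of $t\mapsto\ene tP$ as a marginal of a family that is $C^1$ in $t$ with equi-continuous time-derivatives on energy sublevels, cf.\ \cite{KnZaMi10CGPM,MiRoSa13NADN}.

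The main obstacle is thus entirely contained in the well-posedness of $\Ptname$, i.e.\ in the limit passage for the Kirchhoff stresses of a sequence of minimizers: this is a verbatim repetition of Step~4 of the proof of Lemma \ref{l:prelim5} with \eqref{w3-K} in place of \eqref{w3}, relying on the Dal~Maso--Francfort--Toader technique \cite{DaFrTo05QCGN}. Everything else is the soft bookkeeping sketched above.
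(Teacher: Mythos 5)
Your proposal is correct and follows exactly the route the paper intends: the paper's own "proof" is the single sentence that \eqref{eq:diffclass_a} is immediate from \eqref{simple-Gronwall-argument} and that \eqref{e:ass-p-a} is checked straightforwardly using \eqref{w3-K}, i.e.\ by rerunning Lemma \ref{l:prelim4-5} with the power \eqref{formula-power} and the Kirchhoff-stress control in place of the loading term. Your write-up simply supplies the details (competitor argument with a minimizer $y_s$, the sup/max definition of $\Ptname$ over $\Rt tP\Xi$, and the bounds \eqref{est-mod-der}--\eqref{est-diff-der}) that the paper leaves implicit.
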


To establish the closedness condition \eqref{eq:468}, we exploit
\eqref{conds-g} to check that for $\diffname$ from \eqref{diff-dir}
the analog of Lemma \ref{l:prelim4} holds.  Furthermore, a combination
of the arguments from the proof of Lemma \ref{l:prelim5} with the
techniques from \cite[Prop.\ 5.1, Thm.\ 5.2]{MaiMie09GERI} allows us
to prove that $(\en,\diffname)$ comply with the closedness property in
\eqref{eq:468} (whence the variational sum rule \eqref{eq:42-bis}).

Hence, it remains to check the upper semicontinuity of the
functional $\Ptname: \mathrm{graph}(\diffname) \to \R$.
Preliminarily, we examine the continuity properties of $\partial_t
\calI$. The following result is a consequence of \cite[Prop.\
4.4]{MaiMie09GERI} (see also \cite{FraMie06ERCR}), combined with
\eqref{est-diff-der}.
\begin{lemma}
\label{cont-power}
Assume 
 \eqref{loading}, let $W$ be frame-indifferent and fulfill 
 \eqref{w1}--\eqref{w3} and \eqref{w3-K},  and  let $\gdir$ comply with \eqref{conds-g}. 
 Then, we have 
 \begin{equation}
 \label{power-cont}
 \left.
 \begin{array}{l}
 t_n\to t, \ y_n \to y \text{ in } W^{1,\qphi}(\Omega;\R^d), \\
  P_n \weakto P \text{ in } W^{1,\qg} (\Omega;\R^{d\times d}),
 \\
 \calI(t_n,y_n,P_n) \to  \calI(t,y,P)<\infty 
 \end{array}
 \right\} \ \Longrightarrow \ \partial_t \calI(t_n,y_n,P_n) \to  \partial_t \calI(t,y,P).  
 \end{equation}
\end{lemma}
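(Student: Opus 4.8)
The plan is to start from the explicit representation \eqref{formula-power} of the power,
\[
\begin{aligned}
\partial_t\calI(t,y,P)&=\int_\Omega \inpow{x}{\gdir}{y(x)}{P(x)^{-1}}:V(t,y(x))\dd x\\
&\quad-\pairing{}{W^{1,\qphi}\OOd}{\dell(t)}{\gdir(t,y)}-\pairing{}{W^{1,\qphi}\OOd}{\ell(t)}{\dot g_\dir(t,y)},
\end{aligned}
\]
and first to dispose of the \emph{explicit} time–dependence. By \eqref{est-diff-der} and the boundedness of $\calI(t_n,y_n,P_n)$ (it converges) one has $|\partial_t\calI(t_n,y_n,P_n)-\partial_t\calI(t,y_n,P_n)|\le\omega(|t_n-t|)(\calI(t_n,y_n,P_n)+c_{10})\to0$, while \eqref{simple-Gronwall-argument} applied with $s=t$ yields $\calI(t,y_n,P_n)\to\calI(t,y,P)$ as well. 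Hence it suffices to prove that $\partial_t\calI(t,y_n,P_n)\to\partial_t\calI(t,y,P)$ at the \emph{fixed} time $t$, which is exactly the content of \cite[Prop.\ 4.4]{MaiMie09GERI}; for the reader's convenience I would reproduce its short argument, outlined in the next two paragraphs.

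Since $\qg>d$ we have $W^{1,\qg}(\Omega;\R^{d\times d})\Subset\rmC^0(\overline\Omega;\R^{d\times d})$, so $P_n\to P$ uniformly; because $\inve P\in\rmC^0(\overline\Omega;\R^{d\times d})$ for $P\in\domain$ (cf.\ \eqref{enhanced-domain}), $\det P$ is bounded away from zero and, via \eqref{cofactor-repre}, $\inve P_n\to\inve P$ uniformly. From $y_n\to y$ in $W^{1,\qphi}\Subset\rmC^0$ and \eqref{conds-g} (using $\nabla\gdir\in\mathrm{BC}^1$ and $|(\nabla\gdir)^{-1}|\le C_9$), the fields $\nabla\gdir(t,y_n)$ and $V(t,y_n)=\nabla\dot g_\dir(t,y_n)(\nabla\gdir(t,y_n))^{-1}$ converge uniformly on $\overline\Omega$ and stay bounded in $L^\infty$; consequently $F_n:=\nabla\gdir(t,y_n)\nabla y_n\inve P_n\to F:=\nabla\gdir(t,y)\nabla y\inve P$ in $L^{\qphi}(\Omega;\R^{d\times d})$, hence, along a not relabeled subsequence, a.e.\ in $\Omega$, and by continuity of $\partial_FW(x,\cdot)$ on $\GLD$ (cf.\ \eqref{w3-K}(i)) also $\Kirchx{x}{F_n}\to\Kirchx{x}{F}$ a.e. Subtracting the loading terms — which converge thanks to \eqref{loading} and the chain–rule convergences $\gdir(t,y_n)\to\gdir(t,y)$ and $\dot g_\dir(t,y_n)\to\dot g_\dir(t,y)$ in $W^{1,\qphi}$ — from $\calI(t,y_n,P_n)\to\calI(t,y,P)$, and using the weak lower semicontinuity of $\en_1$ together with Fatou's lemma for the nonnegative densities $W(\cdot,F_n)$, one obtains $\int_\Omega W(x,F_n)\dd x\to\int_\Omega W(x,F)\dd x$.

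The decisive step — and the main obstacle — is the equi–integrability of $\{\Kirchx{\cdot}{F_n}\}$ needed to pass to the limit in the integral term. Since $W(x,F_n)\to W(x,F)$ a.e., $W\ge0$, and $\int_\Omega W(x,F_n)\dd x\to\int_\Omega W(x,F)\dd x$, a Scheff\'e/Vitali argument forces $W(\cdot,F_n)\to W(\cdot,F)$ in $L^1(\Omega)$, so $\{W(\cdot,F_n)\}$ is equi–integrable; by the multiplicative stress control \eqref{w3-K}(ii), $|\Kirchx{x}{F_n}|\le\widetilde C_4(W(x,F_n)+1)$, whence $\{\Kirchx{\cdot}{F_n}\}$ is equi–integrable as well. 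As $V(t,y_n)$ is uniformly bounded and converges a.e., the products $\Kirchx{\cdot}{F_n}:V(t,y_n)$ are equi–integrable and converge a.e., and Vitali's theorem gives $\int_\Omega\Kirchx{x}{F_n}:V(t,y_n(x))\dd x\to\int_\Omega\Kirchx{x}{F}:V(t,y(x))\dd x$. Together with the convergence of the loading terms this proves $\partial_t\calI(t,y_n,P_n)\to\partial_t\calI(t,y,P)$ along the chosen subsequence, and since the limit is uniquely identified the whole sequence converges; combined with the reduction carried out in the first step, \eqref{power-cont} follows. Without the stress–control bound \eqref{w3-K}(ii), the $L^1$–convergence of the energy densities would not translate into any control on the Kirchhoff stress, and the passage to the limit in the power would break down — which is precisely why \eqref{w3-K} is imposed.
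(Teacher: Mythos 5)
Your proof is correct and takes essentially the same route as the paper, which simply invokes \cite[Prop.\ 4.4]{MaiMie09GERI} combined with \eqref{est-diff-der}: you first strip off the explicit time dependence via \eqref{est-diff-der} and \eqref{simple-Gronwall-argument}, and then reconstruct the fixed-time argument of the cited proposition (convergence of the elastic energy densities in $L^1$, equi-integrability of the Kirchhoff stress from \eqref{w3-K}(ii), and Vitali's theorem). The only cosmetic point is that the a.e.\ convergence $\Kirchx{x}{F_n}\to\Kirchx{x}{F}$ should be attributed to \eqref{w3-K}(iii) (applied with $N_n=F_nF^{-1}\to\mathbf{1}$) rather than to \eqref{w3-K}(i), which asserts only differentiability, not continuity, of $\rmD_F W(x,\cdot)$.
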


Therefore, let $( t_n )_n \subset [0,T]$, $(P_n)_n\subset
X = L^{p}(\Omega;\R^{d\times d})$, and $ (\Xi_n)_n \subset
X^* = L^{p'}(\Omega;\R^{d\times d}) $ with $\Xi_n \in \diff{t_n}{P_n}$ for
all $n \in \N$ converge to $t,\, P,\, \Xi$ as in
\eqref{convs-4-closedness}. Thus $\Pt {t_n}{P_n}{\Xi_n} = \partial_t
\calI (t_n,\tilde{y}_n,P_n)$ for some $\tilde{y}_n \in
\Rt{t_n}{P_n}{\Xi_n}$.  The very same arguments as in Step $5$ in the
proof of Lemma \ref{l:prelim5} yield that, up to a further (not
relabeled) subsequence $\tilde{y}_n$ weakly converges in
$W^{\qphi}(\Omega;\R^d)$ to some $\tilde{y} \in \Rt tP{\Xi}$.  Thanks
to Lemma \ref{cont-power} we have
\[
\lim_{n \to \infty} \Pt {t_n}{P_n}{\Xi_n} = \lim_{n \to
  \infty} \partial_t \calI (t_n,y_n,P_n) = \partial_t \calI (t,y,P)
\leq \Pt tP{\Xi}, 
\]
whence the upper semicontinuity of $\Ptname$.  This concludes the
proof of the closedness \eqref{eq:468}.

Finally, combining the arguments in the proof of Lemma \ref{l:prelim6}
with property \eqref{est-diff-der} it can be checked that the CRI
\eqref{eq:48strong} holds for the regularized energy $\en_\rpam$ also
in the case of time-dependent Dirichlet boundary conditions. Thus, the
existence of EDI and EDB solutions follows as in Section \ref{s:5}.

\footnotesize

\newcommand{\etalchar}[1]{$^{#1}$}
\def\cprime{$'$}


\end{document}